\documentclass[oneside, a4paper,11pt]{amsart}

\usepackage{geometry}
\usepackage{amsmath}
\usepackage{amsfonts}
\usepackage{amsthm}
\usepackage{amssymb}
\usepackage{graphicx}
\usepackage{array} 
\usepackage{enumerate}
\usepackage{hyperref}
\usepackage{etoolbox} %toc more space

\usepackage{ marvosym }

\hypersetup{ 
 colorlinks=false,
 pdfborder={0 0 0},}

 %Keine längeren Leerzeichen hinter Punkten
\frenchspacing

%Some math operators

%Long arrows

%\leftmapsto

% Block of specially marked Letters
\newcommand{\g}{\mathfrak{g}} 	% arbitrary Lie algebra
\renewcommand{\sl}{\mathfrak{sl}}	% sl Lie algebra
\newcommand{\Z}{\mathbb{Z}} 	% Integers
\newcommand{\N}{\mathbb{N}} 	% Integers
\newcommand{\R}{\mathbb{R}} 	% reals
\newcommand{\C}{\mathbb{C}} 	% Complex
\renewcommand{\k}{\mathbb{C}} 	% field C

\theoremstyle{plain}
\newtheorem{theorem}{Theorem}[section]
\newtheorem{definitiontheorem}[theorem]{Definition and Theorem}

\newtheorem*{acknowledgementX}{Acknowledgement}

\newtheorem{conjecture}[theorem]{Conjecture}
\newtheorem{corollary}[theorem]{Corollary}

\newtheorem{lemma}[theorem]{Lemma}
\newtheorem{definitionlemma}[theorem]{Definition and Lemma}
\newtheorem{Lemma}[theorem]{Lemma}

\newtheorem{Proposition}[theorem]{Proposition}
\newtheorem{definition}[theorem]{Definition}

\newtheorem{example}[theorem]{Example}
\newtheorem*{exampleX}{Example}

\newtheorem{remark}[theorem]{Remark}

%\theorembodyfont{\normalfont}
%\theorembodyfont{\upshape}

\newtheorem{Def}[theorem]{Definition} % Es wird normal geschrieben.
 % Es wird normal geschrieben.
 
% Es wird normal geschrieben, ohne Nummerierung

%\newtheorem{corollary}[theorem]{corollary}

\newcommand{\supp}{\mathrm{supp}}
\newcommand{\gr}{\mathrm{gr}}
\newcommand{\Vau}{\vee}
\newcommand{\palme}{\mathbin{\rotatebox[origin=c]{90}{$\ll$}}}
%\newcommand{\diamond}{Diamant}
%Sowars:

%Karolina Makros:

\newcommand{\hamburger}[4] 
{
  \thispagestyle{empty}
  \vspace*{-2.5cm}
  \begin{flushright}
    ZMP-HH / #2 \\
    Hamburger Beitr{\"a}ge zur Mathematik Nr. #3 \\
   #4 \\
  \end{flushright}
  \vspace{0.5cm}
  \begin{center}
    \Large \bf
    #1
  \end{center}
  \vspace{0.5cm}
  \begin{center}	
    Simon Lentner\footnote{Corresponding author, 
    simon.lentner@uni-hamburg.de}\\
    %Algebra and Number Theory, 
    University of Hamburg,\\
    %Bundesstra{\ss}e 55, D-20146 Hamburg \\
    \vspace{.2cm}
    Karolina Vocke\\
    University of Oxford,\\
    %Hans-Meerwein-Stra{\ss}e, D-35032 Marburg
  \end{center}
  \vspace{-0.3cm}

}

%\title[Constructing new Borel subalgebras of quantum groups]{}
\begin{document}

\hamburger{A family of new Borel subalgebras of quantum groups}{17-9}{649}{February 2017}
\begin{abstract}
We construct a family of right coideal subalgebras of quantum groups, which have the property that all irreducible representations are one-dimensional, and which are maximal with this property. The obvious examples for this are the standard Borel subalgebras expected from Lie theory, but in a quantum group there are many more. Constructing and classifying them is interesting for structural reasons, and because they lead to unfamiliar induced (Verma-)modules for the quantum group. 

The explicit family we construct in this article consists of quantum Weyl algebras combined with parts of a standard Borel subalgebra, and they have a triangular decomposition. Our main result is proving their Borel subalgebra property. Conversely we prove under some restrictions a classification result, which characterizes our family. 

Moreover we list for $U_q(\sl_4)$ all possible triangular Borel subalgebras, using our underlying results and additional by-hand arguments. This gives a good working example and puts our results into context.

\end{abstract}
\title{}
\author{}
\maketitle
\enlargethispage{3.5cm}
\thispagestyle{empty}
\vspace{-1cm}
\pretocmd{\section}{\addtocontents{toc}{\protect\addvspace{5pt}}}{}{} %add space
\tableofcontents

\newpage 

\section{Introduction}

For a finite-dimensional semisimple Lie algebra $\g$, the maximally solvable subalgebras are called Borel subalgebras. They are an essential element in the structure theory and representation theory of $\g$. For example, all Borel subalgebras are conjugate, and all the finite-dimensional irreducible representations of $\g$ are quotients of the induced Verma modules of one-dimensional characters of the Borel subalgebra. Moreover, any Lie subalgebra decomposes into a semisimple Lie algebra and a solvable Lie algebra.\\ 

The quantum group $U=U_q(\g)$ is a deformation of the universal enveloping algebra $U(\g)$ of a semisimple Lie algebra. I. Heckenberger has proposed to study for quantum groups an analogous notion: We call a right coideal subalgebra \emph{basic} if all irreducible finite-dimensional representations are one-dimensional - this is a representation-theoretic version of solvability. We call it a \emph{Borel subalgebra} if it is maximal with this property. Typical examples are the standard Borel subalgebras $U^+U^0$ and its reflections, which is expected from Lie theory, and is not entirely trivial to prove, see \cite{LV19} Section 3.3.\; But already for $\g=\sl_2$ and $q$ generic we have an additional one-parameter family of Borel subalgebras:   

\begin{exampleX}[\ref{exm_sl2}]
In $U_q(\sl_2)$ there is a family of Borel subalgebras
$B_{\lambda,\lambda'}$ generated by 
$$ \bar{E}:=EK^{-1}+\lambda K^{-1},\;\bar{F}:=F+\lambda'K^{-1},\quad\text{for }
 \lambda\lambda'=\frac{q^2}{(1-q^2)(q-q^{-1})}.$$
Because of the denominator, these subalgebras do not specialize to $q=1$. Different values of $\lambda,\lambda'$ are related by Hopf algebra automorphisms\footnote{This is an open orbit under the automorphism group and its two points of closure are  $\lambda\to\infty,\lambda'\to 0$ and $\lambda\to 0,\lambda'\to \infty$ correspond to the two graded standard Borel subalgebras} of $U_q(\sl_2)$.\\

All these Borel subalgebras are as algebras isomorphic to the quantum Weyl algebra $\langle x,y \rangle / (xy-q^2yx-1)$, and their finite-dimensional irreducible (thus one-dimensional) representations are parametrized by values $e,f$ with $ef=\lambda\lambda'$, see Lemma \ref{lm_Verma_sl2}.
\end{exampleX}

In \cite{LV19} we initiate the study of Borel subalgebras, and as a main tool we use the structure of the graded algebra. In particular, we study Borel subalgebras which have a triangular decomposition, then the positive and negative part are character-shifted coideal subalgebras by \cite{HK11b}, see Theorem \ref{HK11b} below. In \cite{LV19} Section 6 we derive from the structure of the graded algebra a conjectural description of all triangular Borel subalgebras. We have essentially proven that this description gives necessary conditions, but up to now we cannot prove in general 
that the description is sufficient in the sense that we always get indeed an algebra and that it is basic.\\
%CHANGED
%that in all cases we really get an algebra and that it is basic. \\

In the present article, we take as a test case the smallest family of our list, which we call to be with \emph{full support}. We explicitly construct them as algebras, and prove that they are Borel subalgebras. This is our main result Theorem \ref{thm_main}, and the proof is organized in three steps in Section \ref{sec_nondegenerate}. In its course, we determine explicitly the commutator relations in Lemma \ref{lm_commutator}, the (one-dimensional) irreducible representations in Corollaries \ref{cor_irreps} and \ref{cor_irreps2}, and in Lemma~\ref{lm_triangular} we rule out larger basic coideal subalgebras by directly proving very simplified versions of the two main assertions in \cite{LV19}, without any restrictions.

In Section \ref{sec_classification} we also prove conversely, that any Borel subalgebra with full support (and some technical restrictions), is a reflection of one of the Borel subalgebras constructed in this article. The proof depends on an open conjecture \ref{weyltheorem} about prolonging Weyl group elements, which looks to us very natural and which appears  whenever one tries to classify triangular Borel subalgebras, but already in type $A_n$ the proof  in \cite{Vocke16} Section 3.1.3. is tedious. We would be very interested to see a proof for a general root system
%CHANGED for any root system
 -- or to see a counterexample, which would  immediately give a new candidate for a Borel subalgebra.\\

A main reason to construct Borel subalgebras $B$ is that we can study analogues of Verma modules $V(B,\chi)$, induced from any one-dimensional representation $\chi$ of $B$. In our case, these modules have a non-diagonal action of the Cartan part, but they share some properties with standard Verma modules. For example, we see in  \cite{LV19} Lemma ~4.3 that for any $B$ all irreducible $U_q(\g)$-modules arise as quotients of some $V(B,\chi)$, and in the example $B_{\lambda,\lambda'}$ for $\sl_2$ we have determined the structure of the Verma modules.

These new modules of $U_q(\sl_2)$ have already appeared as operator-theoretic construction \cite{Schm96} and in Liouville theory \cite{Tesch01} Section 19. On the other hand, Futorny, Cox and collaborators, starting \cite{Fut94, Cox94} have studied non-standard Borel subalgebras of affine Lie algebras $\mathfrak{g}$; it is conceivable that our construction is related to this construction via Kazhdan-Lusztig correspondence.\\ 

In the present article, we discuss the Verma modules for our new family of Borel subalgebras. Since they are a mixture between commuting copies of the quantum Weyl algebra and a remaining standard Borel part, the resulting representation theory is a mixture between the theory for $B_{\lambda,\lambda'}$ and standard highest weight theory.\\

In a further chapter, we discuss again Borel subalgebra beyond the condition of full support, and we can give in type $A_n$ a construction and a list of all possible triangular Borel subalgebras of this type.\\

In the last chapter we take the exemplary case $A_3$, write down from our list all possible triangular Borel subalgebras, and prove in each case by-hand that they are basic right coideal subalgebras and that they are maximal at least among all triangular basic right coideal subalgebras; conjecturally they are Borel, otherwise they are contained in a larger non-triangular Borel subalgebra. The most interesting is the last example, in which two Weyl algebras extend one another.

\begin{acknowledgementX}
  %{\bf Acknowledgement:} 
  Both authors thank Istvan Heckenberger for answering questions and giving valuable impulses. Another valuable occasion was the Micro-Workshop on Quantum Symmetric Pairs with Stefan Kolb (Hamburg, February 2017), and we also  thank the RTG 1670 of the University of Hamburg for support and hospitality. We thank Giovanna Carnovale for proving for us Lemma \ref{lm_giovanna}. We thank the referee for very valuable remarks and corrections. 
\end{acknowledgementX}
 
\section{Preliminaries}

Let $\g$ be a finite-dimensional semisimple Lie algebra of rank $n$ over the field of complex numbers $\C$.
We denote by $\Pi=\{\alpha_1,\ldots, \alpha_n\}$ a set of positive simple roots, by $\Lambda$ the root lattice, and by $\Phi^+\subset \Lambda$ the set of all positive roots. We denote by $(,)$ the symmetric bilinear form on $\R^\Pi$, by  $c_{ij}=2\frac{(\alpha_i,\alpha_j)}{(\alpha_j,\alpha_j)}$  the  Cartan matrix and by $W$ the Weyl group.

Our article is concerned with the quantum group $U_q(\g)$ where $q$ is not a root of unity, see e.g. \cite{Lusz93, Jan96}. This is a Hopf algebra generated as an algebra by elements~ $E_{\alpha_i},F_{\alpha_i},K_{\alpha_i}^{\pm1}$ for all simple roots $\alpha_i$. We denote its counit, coproduct and antipode by~ $\epsilon,\Delta$ and $S$. 

We abbreviate $U=U_q(\g)$ and introduce the important subalgebras $U^0,U^+,U^-$ generated by all $K_{\alpha_i},E_{\alpha_i},F_{\alpha_i}$ respectively, and we denote $U^{\geq0}:=U^0U^+$ and $U^{\leq0}:=U^0U^-$.
The Hopf algebra $U$ is equipped with a compatible $\Z^\Pi$-grading, such that $E_{\alpha_i},F_{\alpha_i},K_{\alpha_i}^{\pm1}$ have degree $\alpha_i,-\alpha_i,0$. We denote by $U_{\mu}$ the homogeneous subspace of degree~$\mu$.

For each simple root $\alpha_i$ there is an algebra automorphisms $T_{s_i}$ of $U$ due to Lusztig, which sends $U_\mu$ to $U_{\mu'}$ with $\mu'=\mu-\frac{2(\mu,\alpha_i)}{(\alpha_i,\alpha_i)}\alpha_i$. More generally, for any $w\in W$ we can define an algebra automorphism $T_w$ using a reduced expression of $w$, and $T_w$ does not depend on this choice. The following subspaces will be crucial and are discussed for example in \cite{Jan96} Section 8.22 - 8.24:

\begin{definitiontheorem}
 For every $w\in W$ of length $\ell$ and a reduced expression $w=s_{i_1}\cdots s_{i_{\ell}}$, we consider the vector space  $U^+[w]$ spanned by all sorted monomials in the generators $E_{\alpha_{i_1}}, T_{s_{i_1}}(E_{\alpha_{i_2}}),\ldots, T_{s_{i_1}}T_{s_{i_2}}\cdots T_{s_{i_{\ell-1}}}(E_{\alpha_{i_\ell}})$. Then $U^+[w]$ is a subalgebra, which does not depend on the chosen reduced expression or the sorting of the monomials. The sorted monomials are a  basis of $U^+[w]$, called \emph{Poincare-Birkhoff-Witt (PBW) basis}. 
 
 Similarly, one defines for $w\in W$ the subalgebra $U^-[w]$ spanned by all sorted monomials in the generators $F_{\alpha_{i_1}}, T_{s_{i_1}}(F_{\alpha_{i_2}}),\ldots, T_{s_{i_1}}T_{s_{i_2}}\cdots T_{s_{i_{\ell-1}}}(F_{\alpha_{i_\ell}})$. 
\end{definitiontheorem}

In particular, for $w_0$ the longest element of the Weyl group, and some choice of reduced expression, this defines root vectors $E_\beta\in U^+$ for all positive roots $\beta\in \Phi^+$. Altogether,  the quantum group $U$ has a vector space basis of sorted monomials in all $E_\beta,F_\beta,\beta\in \Phi^+$ together with some $K_\mu,\mu\in \Lambda$, where the $K_\mu$ are defined by $K_{\mu}=\prod_iK_{\alpha_i}^{a_i}$ for $\mu=\sum_i a_i\alpha_i$. The automorphisms $T_w$ and the spaces $U^+[w]$ are used inductively to establish this PBW basis and already appear in \cite{Lusz93} in this regard. More general versions appear in the work of Heckenberger and Schneider on Nichols algebras, e.g. \cite{HS10}.\\
  
For $w\in W$ and a choice of reduced expression for $w_0$ that starts with $w$, the space $U^+[w]$ is hence generated by root vectors $E_\beta$ with those positive roots $\beta$ in the subset 
$$\Phi^+(w):=\{\alpha\in \Phi^+\mid w^{-1}\alpha<0\}=\{\beta_1,\ldots, \beta_{\ell(w)}\}.$$
The subalgebras $U^+[w]$ are no Hopf subalgebras, but they obey a weaker compatibility with the coproduct.
\begin{definition}
 A \emph{right coideal subalgebra}  $C$ of a Hopf algebra $H$ is  a subalgebra, such that $\Delta(C)\subset C\otimes H$. A right coideal subalgebra $C$ of a quantum group $U=U_q(\g)$ is called \emph{homogeneous} if $U^0\subset C$. In particular, $C$ is then homogeneous with respect to the $\Lambda$-grading given by the adjoint action of the $K_\mu,\mu\in \Lambda$.
\end{definition}
\newpage
	For example $U^-$ is a right coideal subalgebra, $U^+$ is a left coideal subalgebra. Accordingly, $S(U^+)$ is a right coideal subalgebra and $S(U^-)$ is a left coideal subalgebra. Define an algebra automorphism $\psi$ on $U$ by  $\psi(x_\mu)=q^{-(\mu,\mu)/2}x_\mu K_\mu^{-1}$. Then $\psi(U^\pm)=S(U^\pm)$.
\begin{definitionlemma}
 The image $\psi U^+[w]$ is equal to $S(U^+)\cap U^+[w]U^0$ and it is a right coideal subalgebra of $U^+U^0$.
	
	Similarly, $\psi U^-[w]$ is a left coideal subalgebra. Thus $U^-[w]^{op}:=S\psi U^-[w]$ is a right coideal subalgebra contained in $U^-$. It has a basis of sorted monomials in the generators $F_{\alpha_{i_1}}, T_{s_{i_1}}^{-1}(F_{\alpha_{i_2}}),\ldots, T_{s_{i_1}}^{-1}T_{s_{i_2}}^{-1}\cdots T_{s_{i_{\ell-1}}}^{-1}(F_{\alpha_{i_\ell}})$ for a fixed  reduced expression $w=s_{i_1}\cdots s_{i_{\ell}}$.  
	
	After multiplication with the Hopf subalgebra $U^0$ we obtain the right  coideal subalgebras $U^+[w]U^0=\psi U^+[w]U^0$ and  $S(U^-[w])U^0=U^-[w]^{op}U^0$
\end{definitionlemma}	
\begin{proof}
	The first assertion is \cite{HK11b} Theorem 2.12. The second assertion follows by applying the Cartan involution $\omega$, which is a algebra anti-coalgebra morphism. For the description in terms of inverse Lusztig automorphisms we note that $S\psi$ is an anti-algebra automorphism, which maps $F_{\alpha}\mapsto -q^{(\alpha,\alpha)/2}F_\alpha$, and then we directly compare the formulae \cite{Jan96} 8.14 (8) and (8') to see
	\begin{align*}S\psi(T_{s_i}F_{\alpha_j})
	&=(-1)^r q^{r(\alpha,\alpha)/2}\sum_{k=0}^r
	(-1)^k q^{k(\alpha,\alpha)/2} F_{\alpha_i}^{(r-k)}(S\psi F_{\alpha_j}) F_{\alpha_i}^{(k)}\\
	&=(-1)^r q^{r(\alpha,\alpha)/2}\;T_{s_i}^{-1}(S\psi F_{\alpha_j}).
	\end{align*}
We remark that if we would define $U^+[w]^{op}$ similarly, then $\psi U^+[w]=S(U^+[w]^{op})$.
\end{proof} 
\begin{example}
	We want to demonstrate the above definitions on one explicit example: For the root system of type $A_2$, let $w=s_1s_2$. Then  $U^+[s_1s_2]$ is spanned by the monomials 
	$$(E_{\alpha_1})^{n_1}\;(E_{\alpha_1}E_{\alpha_2}-q^{-1}E_{\alpha_2}E_{\alpha_1})^{n_2}$$
	The coproducts of the algebra generators of $U^+[s_1s_2]$ are 
	\begin{align*}
		\Delta(E_{\alpha_1})&=K_{\alpha_1}\otimes E_{\alpha_1}+
		E_{\alpha_1}\otimes 1,\\
		\Delta(E_{\alpha_1}E_{\alpha_2}-q^{-1}E_{\alpha_2}E_{\alpha_1})&=K_{\alpha_1}K_{\alpha_2}\otimes (E_{\alpha_1}E_{\alpha_2}-q^{-1}E_{\alpha_2}E_{\alpha_1})\\
		&+ (E_{\alpha_1}E_{\alpha_2}-q^{-1}E_{\alpha_2}E_{\alpha_1})\otimes 1 \\
		&+(q-q^{-1})K_{\alpha_2}E_{\alpha_1}\otimes E_{\alpha_2}.
	\end{align*}
	The coproducts of the algebra generators of $\psi U^+[s_1s_2]$ show a right coideal subalgebra
	\begin{align*}
	\Delta(E_{\alpha_1}K_{\alpha_1}^{-1})&=1\otimes E_{\alpha_1}K_{\alpha_1}^{-1}+
	E_{\alpha_1}K_{\alpha_1}^{-1}\otimes K_{\alpha_1}^{-1},\\
	\Delta((E_{\alpha_1}E_{\alpha_2}-q^{-1}E_{\alpha_2}E_{\alpha_1})K_{\alpha_1}^{-1}K_{\alpha_2}^{-1})
	&=1\otimes (E_{\alpha_1}E_{\alpha_2}-q^{-1}E_{\alpha_2}E_{\alpha_1})K_{\alpha_1}^{-1}K_{\alpha_2}^{-1}\\
	&+ (E_{\alpha_1}E_{\alpha_2}-q^{-1}E_{\alpha_2}E_{\alpha_1})K_{\alpha_1}^{-1}K_{\alpha_2}^{-1}\otimes K_{\alpha_1}^{-1}K_{\alpha_2}^{-1} \\
	&+q^{-1}(q-q^{-1})E_{\alpha_1}K_{\alpha_1}^{-1}\otimes E_{\alpha_2}K_{\alpha_1}^{-1}K_{\alpha_2}^{-1},
	\end{align*}
%	\begin{align*}
%	\Delta(q^{-1}E_{\alpha_1}K_{\alpha_1}^{-1})&=1\otimes (q^{-1}E_{\alpha_1}K_{\alpha_1}^{-1})+
%	(q^{-1}E_{\alpha_1}K_{\alpha_1}^{-1})\otimes K_{\alpha_1}^{-1},\\
%	\Delta(q^{-1}(E_{\alpha_1}E_{\alpha_2}-q^{-1}E_{\alpha_2}E_{\alpha_1})K_{\alpha_1}^{-1}K_{\alpha_2}^{-1})
%	&=1\otimes q^{-1}(E_{\alpha_1}E_{\alpha_2}-q^{-1}E_{\alpha_2}E_{\alpha_1})K_{\alpha_1}^{-1}K_{\alpha_2}^{-1}\\
%	&+ q^{-1}(E_{\alpha_1}E_{\alpha_2}-q^{-1}E_{\alpha_2}E_{\alpha_1})K_{\alpha_1}^{-1}K_{\alpha_2}^{-1}\otimes K_{\alpha_1}^{-1}K_{\alpha_2}^{-1} \\
%	&+q^{-1}(q-q^{-1})(q^{-1}E_{\alpha_1}K_{\alpha_1})\otimes E_{\alpha_2}K_{\alpha_1}^{-1}K_{\alpha_2}^{-1}.
%	\end{align*}
	where we divided both by $q^{-1}$. Similarly, $S\psi U^+[w]$ is a left coideal subalgebra.\\
	
	The coproducts of the algebra generators of $U^-[s_1s_2]$ are
	\begin{align*}
	\Delta(F_{\alpha_1})&=1\otimes F_{\alpha_1}+
	F_{\alpha_1}\otimes K_{\alpha_1}^{-1},\\
	\Delta(F_{\alpha_2}F_{\alpha_1}-qF_{\alpha_1}F_{\alpha_2})
	&=1 \otimes (F_{\alpha_2}F_{\alpha_1}-qF_{\alpha_1}F_{\alpha_2})\\
	&+ (F_{\alpha_2}F_{\alpha_1}-qF_{\alpha_1}F_{\alpha_2})\otimes 
	K_{\alpha_1}^{-1}K_{\alpha_2}^{-1} \\
	&-(q-q^{-1})F_{\alpha_2}\otimes F_{\alpha_1}K_{\alpha_2}^{-1}.
	\end{align*}
%	\begin{align*}
%	\Delta(F_{\alpha_1})
%	&=1\otimes F_{\alpha_1}+F_{\alpha_1}\otimes K_{\alpha_1}^{-1},\\
%	\Delta(F_{\alpha_1}F_{\alpha_2}-q^{-1}F_{\alpha_2}F_{\alpha_1})
%	&=1 \otimes (F_{\alpha_1}F_{\alpha_2}-q^{-1}F_{\alpha_2}F_{\alpha_1})\\
%	&+(F_{\alpha_1}F_{\alpha_2}-q^{-1}F_{\alpha_2}F_{\alpha_1})
%	\otimes K_{\alpha_1}^{-1}K_{\alpha_2}^{-1}. \\
%	&+q^{-1}(q-q^{-1})F_{\alpha_2}\otimes F_{\alpha_1}K_{\alpha_2}^{-1}.
%	\end{align*}
%	 $T_{s_1}F_{\alpha_2}=-q^{-1}()F_{\alpha_1}F_{\alpha_2}-q^{-1}F_{\alpha_2}F_{\alpha_1})$.
The coproducts of the algebra generators of $\psi U^-[s_1s_2]$ shows a left coideal subalgebra
\begin{align*}
\Delta(q^{-1}F_{\alpha_1}K_{\alpha_1})&=K_{\alpha_1}\otimes (q^{-1}F_{\alpha_1}K_{\alpha_1})+
(q^{-1}F_{\alpha_1}K_{\alpha_1})\otimes 1,\\
\Delta(q^{-1}(F_{\alpha_2}F_{\alpha_1}-qF_{\alpha_1}F_{\alpha_2})K_{\alpha_1}K_{\alpha_2})
&=K_{\alpha_1}K_{\alpha_2} \otimes (q^{-1}(F_{\alpha_2}F_{\alpha_1}-qF_{\alpha_1}F_{\alpha_2})K_{\alpha_1}K_{\alpha_2})\\
&+(q^{-1}(F_{\alpha_2}F_{\alpha_1}-qF_{\alpha_1}F_{\alpha_2})K_{\alpha_1}K_{\alpha_2})\otimes 1 \\
&-(q-q^{-1})F_{\alpha_2}K_{\alpha_1}K_{\alpha_2} \otimes (q^{-1}F_{\alpha_1}K_{\alpha_1}).
\end{align*}
The coproducts of the algebra generators of $U^-[s_1s_2]^{op}=S\psi U^-[s_1s_2]$ shows a right coideal subalgebra
\begin{align*}
%S(q^{-1}F_{\alpha_1}K_{\alpha_1})=-q^{-1}K^{-1}FK=-q^{+1}F
%divide by -q^{+1}
\Delta(F_{\alpha_1})&=F_{\alpha_1}\otimes K_{\alpha_1}^{-1}+
1\otimes F_{\alpha_1},\\
%S(q^{-1}(F_{\alpha_2}F_{\alpha_1}-qF_{\alpha_1}F_{\alpha_2})K_{\alpha_1}K_{\alpha_2})
%=q^{-1}K_{\alpha_1}^{-1}K_{\alpha_2}^{-1}(F_{\alpha_1}K1F_{\alpha_2}K2-qF_{\alpha_2}K2F_{\alpha_1}K1)
%=q^{+2}(F_{\alpha_1}F_{\alpha_2}-qF_{\alpha_2}F_{\alpha_1})
\Delta(F_{\alpha_1}F_{\alpha_2}-qF_{\alpha_2}F_{\alpha_1})
&=(F_{\alpha_1}F_{\alpha_2}-qF_{\alpha_2}F_{\alpha_1})
\otimes K_{\alpha_1}^{-1}K_{\alpha_2}^{-1} \\
&+1\otimes(F_{\alpha_1}F_{\alpha_2}-qF_{\alpha_2}F_{\alpha_1})\\
&-(q-q^{-1})F_{\alpha_1} \otimes F_{\alpha_2}K_{\alpha_1}^{-1},
\end{align*}
where we divided the first equation by $-q$ and $(-q)^{2}$, respectively, and the second generator is $F_{\alpha_1}F_{\alpha_2}-qF_{\alpha_2}F_{\alpha_1}
=T_{s_1}^{-1}F_{\alpha_2}$.	\\

%	The coproducts of the algebra generators of $SU^-[s_1s_2]$ are
%	\begin{align*}
%	\Delta(-F_{\alpha_1}K_{\alpha_1})
%	&=K_{\alpha_1}\otimes (-F_{\alpha_1}K_{\alpha_1})+(-F_{\alpha_1}K_{\alpha_1})\otimes 1,\\
%	%S(F_{\alpha_2}F_{\alpha_1}-qF_{\alpha_1}F_{\alpha_2})
%	%=q(F_{\alpha_1}F_{\alpha_2}-qF_{\alpha_2}F_{\alpha_1})K_{\alpha_1}K_{\alpha_2}
%	\Delta(q(F_{\alpha_1}F_{\alpha_2}-qF_{\alpha_2}F_{\alpha_1})K_{\alpha_1}K_{\alpha_2})
%	&=q(F_{\alpha_1}F_{\alpha_2}-qF_{\alpha_2}F_{\alpha_1})K_{\alpha_1}K_{\alpha_2}\otimes 1\\
%	&+K_{\alpha_1}K_{\alpha_2}\otimes
%	 q(F_{\alpha_1}F_{\alpha_2}-qF_{\alpha_2}F_{\alpha_1})K_{\alpha_1}K_{\alpha_2} \\
%	&-(q-q^{-1})F_{\alpha_1}K_{\alpha_2}^{-1}\otimes F_{\alpha_2}.
%	\end{align*}
\end{example}

We now list three essential results in the theory of coideal subalgebras of quantum groups: The following theorem was conjectured and proven for type $A_n,B_n$ in \cite{Kha09,KS08} and in general proven in \cite{HS09} Theorem 7.3:
\begin{theorem}\label{HS09} 
The homogeneous right coideal subalgebras $C$ of $U^{\geq0}$ are  $U^+[w]U^0$ for every $w\in W$. Similarly, the right coideal subalgebras $C$ of $U^{\leq0}$ are $S(U^-[w])U^0$.
\end{theorem}
Note that after multiplication with $U^0$, the map $\psi$ becomes irrelevant, because it only multiplies scalars and $U^0$-elements to the generators. Hence for the homogeneous right coideal subalgebras we have $U^+[w]U^0=\psi U^+[w]U^0$ and $S(U^-[w])U^0=U^-[w]^{op}U^0$, where $\psi U^+[w]$ and $U^-[w]^{op}$ are coideal subalgebras of $U^+$ and $U^-$.
\begin{corollary}\label{HK11a}
Every homogeneous right coideal subalgebra $C\subset U$ is necessarily  
$$C=S(U^-[v])U^0 U^+[w]=U^-[v]^{op}\;U^0\;\psi U^+[w].$$
\end{corollary}
\noindent
In \cite{HK11a} Theorem 3.8 conditions on $w,v\in W$, such that $C$ is a right coideal subalgebra.\\

The non-homogeneous right coideal subalgebra inside $U^{\geq0}$ have been classified in \cite{HK11b} Theorem 2.15:
\begin{definitiontheorem}\label{HK11b}
For $w\in W$ and a character $\phi:\psi U^+[w]\to \k$ we define 
$$\supp(\phi):=\{\beta\in \Lambda\mid \exists x_\beta\in \psi U^+[w]_\beta \text{ with }\phi(x_\beta)\neq 0\}.$$
Then we obtain the following right coideal subalgebra called  \emph{character-shift} 
$$\psi U^+[w]_\phi:=\{\phi(x^{(1)})x^{(2)}\mid \forall x\in \psi U^+[w]\}
=\{\phi(x^{(1)})x^{(2)}\mid \forall x\in \psi U^+[w]\}.$$
For any subgroup $L\subset \supp(\phi)^{\perp}$, we obtain a right coideal subalgebra $\psi U^+[w]_\phi \k[L]$.\\

Conversely, every right coideal subalgebra $C\subset U^{\geq0}$, such that $C^0:=C\cap H^0$ is a Hopf algebra, is of this form $\psi U^+[w]_\phi \k[L]$. 
Similarly, every right coideal subalgebra $C\subset U^{\leq0}$, such that $C^0$ is a Hopf algebra, is of the form $U^-[w]^{op}_\phi\k[L]$.
\end{definitiontheorem}

To construct non-homogeneous right coideal subalgebra $C\subset U_q(\g)$ we shall in the following restrict our attention to: 
\begin{definition}\label{deftriang}
We call a right coideal subalgebra \emph{triangular}, if
 $$C=(C\cap U^{\geq0})(C\cap U^{\leq0}),$$
 where we denote $C^{\geq0}:=C\cap U^{\geq0}$, $C^{\leq0}:=C\cap U^{\leq0}$, $C^+:=C\cap U^+$ and $C^-:=C\cap U^-$. 
\end{definition}
\begin{corollary}\label{cortriag}
 Every triangular right coideal subalgebra $C$ of $U$, with $C^0$ a Hopf algebra, is necessarily of the form 
$$U^-[w_-]^{op}_{\phi_-} \C[L] \psi U^+[w_+]_{\phi_+}$$
for some $w_\pm\in W$ and respective characters $\phi_\pm$ and a subgroup $L\subset \supp(\phi)^{\perp}$.

Note that, conversely, such a product is a-priori just a coideal, and it will be closed under multiplication only for specific choices of parameters.
\end{corollary}

%FALLS es einen Darstellungsabschnitt geben soll:
% \begin{theorem}[\cite{Jan96} Theorem 5.10]\label{vermaendlich}
% For any dominant integral weight $\lambda\in\Lambda$ the irreducible $U$-module $L(\lambda)$ has finite dimension. 
% Every finite dimensional simple $U$-module is isomorphic to exactly? one $L(\lambda,\sigma)$ with $\lambda\in\Lambda$ dominant.
% %und $\sigma:\mathbb{Z}\Phi\mapsto\{\pm1\}$.
%\end{theorem}
%\begin{theorem}[\cite{Jan96} Theorem 2.9]\label{sl2halbeinfach}
%Let $M$ be a finite dimensional module of $U_q(\sl_2)$ which is the direct sum of its $K$-Eigenspaces. Then $M$ is a semisimple $U$-module, 
%i.e. the direct sum of $L(\lambda,\sigma)$.
%\end{theorem}
% The proof of this Theorem is similar to the Theorem of Weyl for Lie algebras.\\

Our main interest is the following notion, which extends a standard terminology of the theory of finite-dimensional algebras: 
\begin{definition}
We call an algebra \emph{basic}, if every finite-dimensional irreducible representation is one-dimensional. We call a right coideal subalgebra of $U_q(\g)$ a \emph{Borel subalgebra} if it is basic and it is a maximal right coideal subalgebra with this property. 
\end{definition}
\begin{example}\label{Standardborel}
All homogeneous Borel subalgebras are isomorphic (via some $T_w$) to the \emph{standard Borel subalgebra} $U^{\geq 0}$. For a proof, see \cite{Vocke16} or \cite{LV19} Section 3.3. The fact that $U^+$ and all $\psi U^+[w]$ are basic is proven by induction on $w$. The maximality of the basic right coideal subalgebra $U^{\geq 0}$  is proven using the knowledge of all homogeneous right coideal subalgebras from Theorem~\ref{HK11a}.
\end{example}

\begin{example}\label{exm_sl2}
In $U_q(\sl_2)$ there is a family of non-homogeneous Borel subalgebras
$B_{\lambda,\lambda'}$ with algebra generators
$$\bar{E}:=EK^{-1}+\lambda K^{-1},\;\bar{F}:=F+\lambda'K^{-1},\qquad
\lambda\lambda'=\frac{q^2}{(1-q^2)(q-q^{-1})}.$$
All Borel subalgebras of $U_q(\sl_2)$ are either of this type or the standard Borel subalgebras.\\

Different choices of $\lambda,\lambda'$ are mapped to one another by Hopf algebra automorphisms of $U_q(\g)$. This open orbit has in its closure the graded Bore subalgebras $U^{\leq 0}$ and $U^{\geq 0}$.\\

The condition on $\lambda,\lambda'$ becomes clear if we calculate
\begin{align*}
[\bar{E},\bar{F}]_{q^2}&=[EK^{-1}+\lambda K^{-1},\;F+\lambda'K^{-1}]_{q^2}\\
%&=(EK^{-1}F-q^2FEK^{-1})
%+\lambda (K^{-1}F-q^2 FK^{-1})
%+\lambda' (EK^{-1}K^{-1}-q^2 K^{-1}EK^{-1})
%+\lambda\lambda' K^{-1}K^{-1}(1-q^2)\\
&=\frac{K-K^{-1}}{q-q^{-1}}q^2K^{-1}+\lambda\lambda' K^{-1}K^{-1}(1-q^2)\\
&=\frac{q^2}{q-q^{-1}}\cdot 1,
\end{align*}
while for other values of $\lambda\lambda'$, the algebra contains $K^{-2}$ and thus essentially all of $U_q(\sl_2)$.\\

Thus as algebra $B_{\lambda,\lambda'}$ is isomorphic to the quantum Weyl algebra, which is basic, see e.g. \cite{LV19} Lemma 3.7. More precisely, the irreducible representations of $B_{\lambda,\lambda'}$ are $\C_{e,f}$ with $\bar{E},\bar{F}$ acting by any values $e,f$ with $ef=\lambda\lambda'$.
\end{example}

\begin{example}
	An example for a basic right coideal subalgebra in $U_q(\sl_2)$,  which is \emph{not} triangular, is  
	$$C=\langle EK^{-1}+F\rangle.$$
	However, it is contained in the larger triangular basic right coideal subalgebra in example \ref{exm_sl2}. We expect that this is a rather general pattern.  
\end{example}

\section{Triangular Borel subalgebras with full support}\label{sec_nondegenerate}

By Theorem \ref{HK11b} and Corollary \ref{cortriag}, all right-coideal subalgebras $C$ with a triangular decomposition and with $C^0=\C[L]$ a Hopf algebra, have necessarily the form 
$$U^-[w_-]^{op}_{\phi_-} \C[L] \psi U^+[w_+]_{\phi_+}.$$

In this section we construct and classify triangular Borel subalgebras that fulfill an additional condition we call \emph{full support}:
$$\Phi^+(w_+)\cap\Phi^+(w_-)=\supp(\phi_+)\cap \supp(\phi_-).$$

Note that the inclusion $\supseteq$ always holds, since $\Phi^+(w_\pm)\supseteq\supp(\phi_\pm)$.  

\begin{remark}
 The reason for introducing this condition is as follows: By Lemma \cite{LV19} Lemma 6.3, if a triangular $C$ is basic, then necessarily for every 
 $\beta\in \Phi^+(w_+)\cap\Phi^+(w_-)$ there has to exists a  $\nu\in\supp(\phi_+)\cap \supp(\phi_-)$ with $\nu<\beta$ and $(\nu,\beta)\neq 0$. The easiest case is of course $\nu=\beta$, but $\beta$ may not be in the support. In this situation the character shifts $E_\nu,F_\nu$ form a quantum Weyl algebra, and the distance between $\beta$ and $\nu$ indicates in $C$ towers of generators above a quantum Weyl algebra.
 
 In more general cases the distance between $\beta$ and $\nu$ is a measure of complication, and in the subsequent section \ref{sec_degenerate} we will discuss these cases. 
 
 A more severe effect can be seen in the last example in this article for $A_3$: Here we have both $\nu,\beta\in\supp$, so we have a second Weyl algebra at the root $\beta$, extending the Weyl algebra at the root $\nu$. To prove that such a $C$ is basic, one has to inductively prove that it acts in a certain trivial way on a given irreducible representation. 
\end{remark}

\subsection{Construction}

The central construction of the present article is the following:

\begin{definition}\label{def_height0}
 Let $\supp$ be a set of pairwise orthogonal simple roots, then we  define the Weyl group element $x=\prod_{\alpha\in\supp} s_\alpha$, which satisfies $\Phi^+(x)=\supp$.
 
  For each $\alpha\in \supp$, let~$\lambda_{\alpha}^+,\lambda^-_{\alpha}\in \C^\times$ be arbitrary scalars, such that    
 $\lambda^+_{\alpha}\lambda^-_{\alpha}=\frac{q_{\alpha}^2}{(1-q_\alpha^{2})(q_\alpha-q_\alpha^{-1})}$ with $q_\alpha:=q^{(\alpha,\alpha)/2}$. With these scalars we define characters $\phi_-,\phi_+$ on~$U^-[x]^{op}$ and $\psi U^+[w_0]=\psi U^+$ with support  $\supp(\phi_-)=\supp(\phi_+)=\supp$ by setting on generators  
 $\phi_-(F_\alpha):=\lambda_\alpha^-$ and 
 $\phi_+(E_\alpha K_\alpha^{-1}):=\lambda_\alpha^+$ for $\alpha\in \supp$ and by $\phi_+(E_\alpha K_\alpha^{-1}):=0$ for other simple roots $\alpha$.
 
 Associated to this data, and with $L=\supp^\perp$, we define the right coideal subalgebra 
$$C=U^-[x]^{op}_{\phi_-} \C[L] \psi U^+_{\phi_+}.$$
 \end{definition}
\noindent Our main result for this construction is:
\begin{theorem}\label{thm_main} 
For the right triangular coideal $C$ in Definition \ref{def_height0} holds the following:
\begin{enumerate}[a)]
 \item $C$ is a subalgebra.
 \item $C$ is a basic algebra.
 \item $C$ is a Borel subalgebra.
\end{enumerate}
\end{theorem}
The proofs are given in the next subsections. We now make some remarks:
\begin{itemize}
%  \item The algebra contains a direct product of quantum Weyl algebras, one for each $\alpha\in\supp$. Lemma \ref{lm_commutator} below states that the character-shifted $\bar{F}_\alpha$ q-commutes with all character-shifted root vectors $\bar{E}_\mu$ except $\bar{E}_\alpha$, clarifying the algebra structure.
 \item For fixed $\supp$ and different choices of $\lambda_\alpha^+,\lambda_\alpha^-$ these coideal subalgebras are mapped to one another by Hopf automorphisms of $U_q(\g)$. Namely, for every set of nonzero scalars $(t_\alpha)_{\alpha\in\Pi}$ there exists a Hopf automorphism that maps $E_\alpha,F_\alpha,K_\alpha$ to~$t_\alpha E_\alpha,t_\alpha^{-1}F_\alpha,K_\alpha$, which is due to the $\Lambda$-grading of the Hopf algebra. 
 \item The graded algebra of $C$, with respect to the standard filtration of $U_q(\g)$, is the homogeneous Borel subalgebra $U^{\geq 0}$. This is a point in the closure of the open orbit of $C$ under the Hopf automorphisms above; one could also say it is the limit of all $\lambda_\alpha^-\to \infty$. Similarly, the filtration of $U_q(\g)$ with respect to other sets of simple roots could be used to describe other points in the closure, and the respective graded algebras of $C$ would be other homogeneous Borel subalgebras.
\end{itemize}

\subsection{Proof of Theorem \ref{thm_main} a)}

Let $C$ be constructed from $\supp,x,L,\phi_\pm,\lambda_\alpha^\pm$ as in Definition \ref{def_height0}. Our assertion in Theorem \ref{thm_main} a) follows immediately from 
the following explicit commutator relations between the subalgebras $U^-[x]^{op}_{\phi_-}$ and $\psi U^+_{\phi_+}$:
\begin{lemma}\label{lm_commutator}
Choose a regular expression for the longest element $w_0$. For every element $\psi(E_\mu),\mu\in\Phi^+$ and every simple root vector $F_\alpha,\alpha\in \supp$ we denote the character-shifts
\begin{align*}
\bar{E}_\mu &:=(\phi_+\otimes \mathrm{id})\Delta(E_\alpha K_\alpha^{-1}),\\
\text{e.g. }
\bar{E}_\alpha&:=(\phi_+\otimes \mathrm{id})\Delta(E_\alpha K_\alpha^{-1})=E_\alpha K_\alpha^{-1}+\lambda^+_\alpha K_\alpha^{-1}
,\qquad \lambda^+_\alpha=\phi_+(E_\alpha K_\alpha^{-1}),\\
\bar{F}_\alpha&:=(\phi_-\otimes \mathrm{id})\Delta(F_\alpha)=F_\alpha+\lambda^-_\alpha K_\alpha^{-1},
\hspace{2.35cm} \lambda^-_\alpha=\phi_-(F_\alpha).
\end{align*}
Then we have in the q-commutator relations 
\begin{align*}
[\bar{F}_\alpha,\bar{E}_\mu]_{q^{-(\alpha,\mu)}} 
&=0,
\qquad \qquad \text{for } \alpha\neq\mu,\\
[K_\lambda,\bar{F}_\alpha]_1=
[K_\lambda,\bar{E}_\mu]_{q^{+(\lambda,\alpha)}}&=0,
\qquad \qquad \text{for } \lambda\in\supp^\perp,\\
[\bar{F}_\alpha,\bar{E}_\alpha]_{q^{-(\alpha,\alpha)}} 
&=\frac{q_\alpha^2}{q_\alpha-q_\alpha^{-1}}.
\end{align*}
The relations between different $\bar{E}_\mu,\bar{E}_\nu$ are complicated, but since character-shift is an algebra isomorphism, they follow from the respective relations in $U^+$. 
\end{lemma}
The third relation holds by the same calculation as for a single quantum Weyl algebra in Example \ref{exm_sl2}. The second relation clearly holds as the character-shift degrees are in  $\lambda-\mathbb{N}[\supp]$.  The challenge is the first relation. We prove it in the present subsection by rather explicit calculations, using slight generalizations of Lusztig's maps $r_\alpha,r_\alpha'$.\\

%%%%%%%%%%%%%%%%%%%%%%%%%%%%%%%%%%%%%%%%%%%%%%%%%%%%%%%%%%%%%

% For calculating the commutators we utilize the maps $r_{\alpha}$ and $r_{\alpha}'$ due to Lusztig and the following definition and Lemmata can be found in \cite{Jan96} chapter 6. The aim of the subsection is to generalize Lemma \ref{lemmaralpha1}, which describes the first nontrivial term (containing a simple root vector) in the coproduct of any element, stepwise until Lemma \ref{ralphaallgemein}, which gives the same information for certain non-simple root vectors. Last we apply Lemma \ref{rist0}, which describes $T_{\alpha}(U^-[s_{\alpha}w_0])$ as zeroes of $r_{\alpha}$ to the specific situation of a non-simple root vector in Lemma \ref{rist0neu}.

The following Definition and Lemmas can be found in \cite{Jan96} Chapter 6:

\begin{Def}\label{ralpha}
 % Für alle $x\in U_{\mu}^+$ (mit $\mu\in \mathbb{Z}\Phi,\mu\geq0$) gilt 
 % $$\Delta(x)\in \bigoplus_{o\leq\nu\leq\mu}U^+_{\mu-\nu}K_{\nu}\otimes U^+_{\nu}$$
 Let $x\in U_{\mu}^+$, by which we again denote the subspace of elements in $U^+$ with $\mathbb{N}^\Pi$-degree~$\mu$. For all $\alpha\in\Pi$ there exist elements $r_{\alpha}(x)$ and $r_{\alpha}'(x)$ in $U_{\mu-\alpha}^+$ such that:\index{aar@$r_{\alpha}(x)$, $r'_{\alpha}(x)$}
 \begin{align*}
 \Delta(x)&=x\otimes 1+\sum_{\alpha\in\Pi}r_{\alpha}(x)K_{\alpha}\otimes E_{\alpha}+(rest)\label{rest1}\\
 \Delta(x)&=K_{\mu}\otimes x+\sum_{\alpha\in\Pi}E_{\alpha}K_{\mu-\alpha}\otimes r_{\alpha}'(x)+(rest),
 \end{align*}
where (rest) contains terms in $U_{\mu-\nu}K_{\nu}\otimes U_{\nu}^+$ for $\nu> 0,\nu\notin\Pi$ in the first equation and for  $\mu-\nu>0,\mu-\nu\notin\Pi$ in the second equation.
In particular we have
$$r_{\alpha}(1)=r_{\alpha}'(1)=0,\qquad 
r_{\alpha}(E_{\beta})=r_{\alpha}'(E_{\beta})=\delta_{\alpha\beta},\quad \forall\beta\in\Pi.$$
\end{Def}

\begin{Lemma}\label{lemmaralpha1}
For these maps $r_{\alpha}$ and $r'_{\alpha}$ the following relations hold:\\
a) For all $x\in U_{\mu}^+$ and $x'\in U_{\mu'}^+$
$$r_{\alpha}(xx')=xr_{\alpha}(x')+q^{(\alpha,\mu')}r_{\alpha}(x)x'\text{ and } r'_{\alpha}(xx')=q^{(\alpha,\mu)}xr'_{\alpha}(x')+r'_{\alpha}(x)x'.$$
 b) For all $x\in U_{\mu}^+$ and $y\in U^-$ we have for the inner form
 $$(F_{\alpha}y,x)=(F_{\alpha},E_{\alpha})(y,r_{\alpha}'(x)) \text{ and } (yF_{\alpha},x)=(F_{\alpha},E_{\alpha})(y,r_{\alpha}(x)).$$
 c) We have $(r_{\alpha}')(x)=\tau r_{\alpha}\tau(x)$ for all $x\in U_{\mu}^+$ with $\tau$ the Cartan involution
 \end{Lemma}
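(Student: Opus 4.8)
The plan is to establish the three assertions in order: (a) directly from the fact that $\Delta$ is an algebra map, (b) from the defining compatibility of the skew-Hopf pairing with $\Delta$, and (c) from (b) together with the non-degeneracy of the pairing.

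\emph{Proof of (a).} Expand $\Delta(xx')=\Delta(x)\Delta(x')$ using the two displays of Definition \ref{ralpha} for each factor, and collect the coefficient of $K_\alpha\otimes E_\alpha$ in \eqref{rest1}. The key bookkeeping point is a height argument: the ``(rest)'' of each factor lies in $\bigoplus_{\nu}U^+_{\bullet}K_\nu\otimes U^+_\nu$ with $\mathrm{ht}(\nu)\geq 2$, and the $Q$-degree of the right tensor leg is additive under multiplication, so a product of two contributions can land in a right leg of height exactly $1$ only when one factor is the leading term and the other the simple-root term. Thus the only contributions to the $K_\alpha\otimes E_\alpha$ part are $(x\otimes 1)\big(r_\alpha(x')K_\alpha\otimes E_\alpha\big)$, giving $x\,r_\alpha(x')$, and $\big(r_\alpha(x)K_\alpha\otimes E_\alpha\big)(x'\otimes 1)$, where commuting $K_\alpha$ past $x'\in U^+_{\mu'}$ produces the factor $q^{(\alpha,\mu')}r_\alpha(x)x'$. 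Since the coefficient of $K_\alpha\otimes E_\alpha$ in $\Delta(xx')$ is $r_\alpha(xx')$, the first formula follows; the second is the same computation starting from \eqref{rest2}, where the $q$-power now comes from moving $K_\mu$ past $E_\alpha$.

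\emph{Proof of (b).} Use the skew-Hopf pairing $(\ ,\ )\colon U^{\leq 0}\times U^{\geq 0}\to\k$ with $(F_\alpha,E_\beta)=\delta_{\alpha\beta}(F_\alpha,E_\alpha)$, $(F_\alpha,K_\mu)=0$, and its compatibility $(y_1y_2,x)=(y_1\otimes y_2,\Delta(x))$ (and the mirror identity for the left factor). Plugging in \eqref{rest1}, the term $x\otimes 1$ pairs to zero against $y\otimes F_\alpha$ for $Q$-degree reasons, the ``(rest)'' pairs to zero because $F_\alpha$ is homogeneous of height $1$ while the right legs there have height $\geq 2$, and only $\sum_\beta r_\beta(x)K_\beta\otimes E_\beta$ survives; the $K_\beta$ is invisible to the pairing with $y\in U^-$, so one obtains $(F_\alpha,E_\alpha)(y,r_\alpha(x))$. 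The identity for $r_\alpha'$ is the same argument applied to \eqref{rest2}. I expect this step to be the main obstacle: the whole point is to fix the normalization of the pairing and the $K$-twists so that no spurious powers of $q$ appear, which is exactly where one must be faithful to the conventions of \cite{Jan96} Chapter 6.

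\emph{Proof of (c).} Deduce it from (b). Let $\tau$ be the anti-automorphism with $\tau(E_i)=E_i$, $\tau(F_i)=F_i$, $\tau(K_\mu)=K_\mu^{-1}$, which preserves the homogeneous components of $U^+$ and under which the pairing is invariant, $(\tau y,\tau z)=(y,z)$. For $x\in U^+_\mu$ and arbitrary $y\in U^-_{-\mu+\alpha}$, part (b) gives $(F_\alpha,E_\alpha)(y,r_\alpha'(x))=(F_\alpha y,x)$. Applying $\tau$-invariance and $\tau(F_\alpha y)=\tau(y)F_\alpha$ rewrites the right side as $(\tau(y)F_\alpha,\tau(x))=(F_\alpha,E_\alpha)(\tau(y),r_\alpha(\tau x))=(F_\alpha,E_\alpha)(y,\tau r_\alpha\tau(x))$. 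Since this holds for all such $y$ and the pairing $U^-_{-\mu+\alpha}\times U^+_{\mu-\alpha}\to\k$ is non-degenerate, $r_\alpha'(x)=\tau r_\alpha\tau(x)$. (One could alternatively prove (c) directly by applying $\tau\otimes\tau$ together with the flip to \eqref{rest1} and matching the result against \eqref{rest2}, but routing through (b) avoids verifying the interaction of $\Delta$ with $\tau$ by hand.)
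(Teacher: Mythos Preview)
The paper does not prove this lemma at all; it simply refers to \cite{Jan96} Chapter~6, where these identities are established as standard properties of $r_\alpha$ and $r_\alpha'$. Your argument is correct and is exactly the standard proof one finds there: (a) by reading off the $K_\alpha\otimes E_\alpha$ (resp.\ $E_\alpha K_{\mu+\mu'-\alpha}\otimes(-)$) component of $\Delta(x)\Delta(x')$ via a height count, (b) by the Hopf-pairing compatibility with $\Delta$ and the orthogonality of the weight components, and (c) from (b) via $\tau$-invariance and non-degeneracy of the pairing.
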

\begin{Lemma}\label{lemmaralpha2}
%ohne \mu???
 Let $\alpha\in\Pi$ and $\mu \in \Lambda_+$. Then for all $y\in U_{-\mu}^-$ and $x\in U_{\mu}^+$ we have
 \begin{align*}
 E_{\alpha}y-yE_{\alpha}&=(q_{\alpha}-q_{\alpha}^{-1})^{-1}(K_{\alpha}r_{\alpha}(y)-r'_{\alpha}(y)K_{\alpha}^{-1}),\\
 xF_{\alpha}-F_{\alpha}x&=(q_{\alpha}-q_{\alpha}^{-1})^{-1}(r_{\alpha}(x)K_{\alpha}-K_{\alpha}^{-1}r'_{\alpha}(x)).
 \end{align*}
\end{Lemma}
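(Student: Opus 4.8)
The plan is to reduce the two identities to a single one and prove that one by induction on the height of $\mu$; here $r_\alpha,r'_\alpha$ on $U^-$ are understood as the mirror of Definition \ref{ralpha} (equivalently, transported from $U^+$ by the Cartan involution). Since $\tau$ interchanges $E_\alpha$ with $F_\alpha$ and, by Lemma \ref{lemmaralpha1}(c), $r_\alpha$ with $r'_\alpha$, and since the two right-hand sides are mirror-symmetric in the order of their factors, applying $\tau$ to the first identity written for $\tau(x)\in U^-_{-\mu}$ produces the second identity for $x\in U^+_\mu$; only the matching of the two signs $\tau$ introduces (on $K_\alpha-K_\alpha^{-1}$ and on $E_\alpha y-yE_\alpha$) needs a line of checking. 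So I would prove only $E_\alpha y-yE_\alpha=(q_\alpha-q_\alpha^{-1})^{-1}\bigl(K_\alpha r_\alpha(y)-r'_\alpha(y)K_\alpha^{-1}\bigr)$ for $y\in U^-_{-\mu}$.

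The base cases are immediate: for $\mu=0$ both sides vanish because $y$ is a scalar and $r_\alpha(1)=r'_\alpha(1)=0$; for $\mu$ simple, $y$ is a multiple of some $F_\beta$ and the identity is just $E_\alpha F_\beta-F_\beta E_\alpha=\delta_{\alpha\beta}\tfrac{K_\alpha-K_\alpha^{-1}}{q_\alpha-q_\alpha^{-1}}$ together with $r_\alpha(F_\beta)=r'_\alpha(F_\beta)=\delta_{\alpha\beta}$. For the inductive step it is enough, by linearity, to take $y=vF_\beta$ with $v\in U^-$ of smaller height. I would write $E_\alpha(vF_\beta)-(vF_\beta)E_\alpha=(E_\alpha v-vE_\alpha)F_\beta+v(E_\alpha F_\beta-F_\beta E_\alpha)$, plug in the inductive hypothesis for the first summand and the base case for the second, and then commute the resulting copies of $K_\alpha^{\pm1}$ to the outside, collecting powers of $q$ dictated by the weights they pass. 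On the other side I would expand $r_\alpha(vF_\beta)$ and $r'_\alpha(vF_\beta)$ by the twisted Leibniz rules for $r_\alpha,r'_\alpha$ on $U^-$ — the exact analogues of Lemma \ref{lemmaralpha1}(a), obtained from the $U^+$ versions via $\tau$ and Lemma \ref{lemmaralpha1}(c) — and compare. What remains is then a purely formal check that the accumulated $q$-exponents on the two sides coincide.

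A more structural alternative would be to read both identities directly off the coproduct: with Definition \ref{ralpha} (and its $U^-$ mirror) and $\Delta(E_\alpha)=E_\alpha\otimes1+K_\alpha\otimes E_\alpha$, the element $E_\alpha y-yE_\alpha$ is exactly the $U^-$-part of the Drinfeld-double straightening of $U^{\geq0}$ against $U^{\leq0}$, and the coefficients are forced by the non-degeneracy of the Hopf pairing and the formulas $(F_\alpha y,x)=(F_\alpha,E_\alpha)(y,r'_\alpha(x))$, $(yF_\alpha,x)=(F_\alpha,E_\alpha)(y,r_\alpha(x))$ of Lemma \ref{lemmaralpha1}(b). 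Whichever route one takes, the one genuinely delicate point is the bookkeeping of the powers of $q$ produced by moving $K_\alpha^{\pm1}$ past homogeneous elements; that is the step I would expect to demand care, while everything else is dictated by the two base cases and the Leibniz rules.
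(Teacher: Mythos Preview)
Your proposal is correct and follows the standard induction-on-height argument; note, however, that the paper does not prove this lemma at all but simply cites \cite{Jan96} chapter 6, where exactly the argument you outline (Leibniz rules for $r_\alpha,r'_\alpha$ plus induction, with the Cartan involution relating the two identities) appears. So your write-up is more detailed than the paper's own treatment, and matches the cited source.
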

\begin{Lemma}[\cite{Jan96} Section 8.26]\label{rist0}
 For $w_0$ the longest element and $\alpha\in\Pi$ holds
 \begin{align*}
 T_{\alpha}(U^+[s_{\alpha}w_0])=\{x\in U^+\mid r_{\alpha}(x)=0\},\\
 U^+[s_{\alpha}w_0]=\{x\in U^+\mid r'_{\alpha}(x)=0\},
 \end{align*}
where $T_{\alpha}$ are the Lusztig automorphisms.
\end{Lemma}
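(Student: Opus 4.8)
The plan is to prove both identities by reinterpreting the kernels of the skew derivations $r_\alpha,r_\alpha'$ through the nondegenerate Drinfeld pairing and matching them against the PBW factorisation of $U^+$ in the simple direction $\alpha$. I treat the $r_\alpha$-statement in detail and obtain the $r_\alpha'$-statement by symmetry.

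\emph{Set-up.} Since $\alpha\in\Pi=\Pi\cap\Phi^+(w_0)$, Corollary \ref{piamanfang} gives a reduced expression $w_0=s_\alpha s_{i_2}\cdots s_{i_N}$ beginning with $s_\alpha$. Reading off $\Phi^+(w_0)=\Phi^+$ in this order one finds $\beta_1=\alpha$ and $\beta_{k+1}=s_\alpha(\gamma_k)$, where $\gamma_1,\dots,\gamma_{N-1}$ are the roots attached to $s_\alpha w_0=s_{i_2}\cdots s_{i_N}$; hence $\Phi^+(s_\alpha w_0)=s_\alpha(\Phi^+\setminus\{\alpha\})=\Phi^+\setminus\{\alpha\}$, and on root vectors $E_{\beta_{k+1}}=T_\alpha(E_{\gamma_k})$, so $T_\alpha(U^+[s_\alpha w_0])$ is the subalgebra generated by $E_{\beta_2},\dots,E_{\beta_N}$. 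The Levendorskii--Soibelman PBW basis for this ordering yields $U^+=\bigoplus_{a\ge0}E_\alpha^{\,a}\,T_\alpha(U^+[s_\alpha w_0])$, i.e. $U^+=T_\alpha(U^+[s_\alpha w_0])\oplus E_\alpha U^+$. Finally, the twisted Leibniz rules of Lemma \ref{lemmaralpha1}a) show at once that $\ker r_\alpha$ and $\ker r_\alpha'$ are subalgebras of $U^+$, both sending $U^+_\mu$ into $U^+_{\mu-\alpha}$.

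\emph{The inclusion.} I would first show $T_\alpha(U^+[s_\alpha w_0])\subseteq\ker r_\alpha$. As $\ker r_\alpha$ is a subalgebra, it suffices to check $r_\alpha(E_{\beta_k})=0$ for $k\ge2$, by induction along the convex order $\beta_2<\cdots<\beta_N$. For the indecomposable generators, namely the $\beta_k$ of the form $s_\alpha(\alpha_j)$ with $\alpha_j\ne\alpha$, one has $E_{\beta_k}=T_\alpha(E_{\alpha_j})$, and substituting the explicit Lusztig expression $T_\alpha(E_{\alpha_j})=\sum_{r=0}^{m}(-1)^r q_\alpha^{-r}E_\alpha^{(m-r)}E_{\alpha_j}E_\alpha^{(r)}$ (with $m=-c_{\alpha_j,\alpha}$) into the twisted Leibniz rule, using $r_\alpha(E_\alpha)=1$ and $r_\alpha(E_{\alpha_j})=0$, produces a telescoping alternating sum, so $r_\alpha(E_{\beta_k})=0$. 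Every remaining $\beta_k$ decomposes by convexity as $\beta_k=\beta_i+\beta_{i'}$ with $i,i'<k$ and $\beta_i,\beta_{i'}\in\Phi^+\setminus\{\alpha\}$, whence $E_{\beta_k}$ is a nonzero multiple of the $q$-commutator $[E_{\beta_i},E_{\beta_{i'}}]_{q}$ of two earlier generators already lying in $\ker r_\alpha$; the subalgebra property then propagates the vanishing.

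\emph{Equality and the second identity.} To upgrade the inclusion to equality I use a graded dimension count. By Lemma \ref{lemmaralpha1}b) and $(F_\alpha,E_\alpha)\ne0$, the maps $r_\alpha,r_\alpha'$ are, up to this scalar, adjoint under the nondegenerate $Q$-graded Drinfeld form to right resp. left multiplication by $F_\alpha$ on $U^-$; since $U^-$ is a domain these multiplications are injective, so $r_\alpha$ and $r_\alpha'$ are surjective onto $U^+_{\mu-\alpha}$. Hence $\dim(\ker r_\alpha)_\mu=\dim U^+_\mu-\dim U^+_{\mu-\alpha}$, which by the PBW identity $\operatorname{grdim}U^+=(1-t^\alpha)^{-1}\operatorname{grdim}U^+[s_\alpha w_0]$ telescopes to $\dim U^+[s_\alpha w_0]_\mu$; and this equals $\dim T_\alpha(U^+[s_\alpha w_0])_\mu$ because $s_\alpha$ permutes $\Phi^+\setminus\{\alpha\}$, so $\operatorname{grdim}U^+[s_\alpha w_0]$ is $s_\alpha$-invariant. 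The inclusion then forces $\ker r_\alpha=T_\alpha(U^+[s_\alpha w_0])$. The identity $U^+[s_\alpha w_0]=\ker r_\alpha'$ follows by the mirror argument using the left ideal $F_\alpha U^-$ and the left PBW factorisation, or directly from Lemma \ref{lemmaralpha1}c): from $r_\alpha'=\tau r_\alpha\tau$ one gets $\ker r_\alpha'=\tau(\ker r_\alpha)$, and one identifies $\tau(T_\alpha(U^+[s_\alpha w_0]))$ with $U^+[s_\alpha w_0]$ via the compatibility of the Cartan involution with $T_\alpha$ and the same dimension count.

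\emph{Main obstacle.} The genuinely hard point is the inclusion step, i.e. the vanishing of $r_\alpha$ on the generators of $T_\alpha(U^+[s_\alpha w_0])$. The naive induction ``express $E_{\beta_k}$ as a $q$-commutator avoiding $E_\alpha$'' breaks down precisely for the roots $s_\alpha(\alpha_j)=\alpha_j-c_{\alpha_j,\alpha}\alpha$, whose only root-theoretic splitting involves $\alpha$ itself; there the cancellation is invisible at the level of the root system and surfaces only after the explicit braid-operator computation above. This is the single place where the argument relies on the detailed structure of the Lusztig automorphisms rather than on formal manipulation, and it is exactly the content carried out in \cite{Jan96}.
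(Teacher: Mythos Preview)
The paper does not prove this lemma at all; it is quoted from \cite{Jan96} p.~166 without argument, so there is no proof in the paper to compare against. Your overall architecture --- establish the inclusion $T_\alpha(U^+[s_\alpha w_0])\subseteq\ker r_\alpha$ and then close by a graded-dimension count using surjectivity of $r_\alpha$ (via the nondegenerate pairing and injectivity of multiplication by $F_\alpha$) --- is the standard one, and your dimension count is carried out correctly.

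The inclusion step, however, has a genuine gap. Your induction rests on the dichotomy that every $\beta_k$ with $k\ge2$ is either of the form $s_\alpha(\alpha_j)$ or decomposes as $\beta_k=\beta_i+\beta_{i'}$ with \emph{both} $i,i'<k$. Convexity of the reflection order says exactly the opposite: whenever $\beta_k=\beta_i+\beta_{i'}$ in $\Phi^+$, one index lies below $k$ and the other above, so you never get two strictly earlier summands. (A simple root $\alpha_j\ne\alpha$ with $(\alpha_j,\alpha)\ne0$ also escapes your dichotomy, though there $r_\alpha(E_{\alpha_j})=0$ is immediate anyway.) Even granting a decomposition inside $\Phi^+\setminus\{\alpha\}$, the assertion that the PBW root vector $E_{\beta_k}$ is a \emph{nonzero scalar multiple} of a single $q$-commutator $[E_{\beta_i},E_{\beta_{i'}}]_q$ is not a general fact: the Levendorskii--Soibelman relations go the other direction, expressing such a commutator as a linear combination of ordered monomials in the intermediate root vectors, and do not single out $E_{\beta_k}$. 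So the ``subalgebra property propagates the vanishing'' step does not go through as written. You correctly identify in your last paragraph that the explicit computation $r_\alpha(T_\alpha E_{\alpha_j})=0$ is the hard input; what is missing is a correct argument that these elements actually generate $T_\alpha(U^+[s_\alpha w_0])$ as an algebra (equivalently, that the $E_{\alpha_j}$, $j\neq i$, generate $U^+[s_\alpha w_0]$), or an alternative route to the inclusion --- this is precisely what the cited passage in Jantzen supplies.
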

\begin{corollary}\label{rist0neu}
 Fix a reduced expression of the longest element $w_0=s_{1}s_{2}\ldots s_{{\ell(w_0)}}$ and denote respectively $\Phi^+=\{\beta_1,\ldots,\beta_{\ell(w_0)}\}$. Let $\alpha=\beta_j$ be a simple root, then for the root vectors holds
 \begin{align*}
 r_{\alpha}'(E_{\beta_i})&=0,\qquad \text{for }i<j,\\
 r_{\alpha}(E_{\beta_i})&=0,\qquad \text{for }i>j.
 \end{align*}
 \end{corollary}
%  \begin{proof}
%  According to Lemma \ref{rist0} we know:
%  \begin{align*}
%  T_{\alpha}(U^+[s_{\alpha}w_0])=\{x\in U^+\mid r_{\alpha}(x)=0\}\\
%  U^+[s_{\alpha}w_0]=\{x\in U^+\mid r'_{\alpha}(x)=0\}
%  \end{align*}
%  a) For $i<j$ in particular $E_{\beta_i}$ lies in $U^+[s_{\alpha}w_0]$, thus $r_{\alpha}'(E_{\beta_i})=0$.\\
%  b) For $i>j$ in particular $E_{\beta_i}$ lies in $T_{\alpha}(U^+[s_{\alpha}w_0])$, thus $r_{\alpha}(E_{\beta_i})=0$.
%  \end{proof}

Our goal is to prove in Lemma \ref{ralphabar} a slight generalizations of the coproduct formula in Definition \ref{ralpha} and the product formula in Lemma \ref{lemmaralpha1}, which allows to incorporate different powers of $r_{\alpha_i}$ for $\alpha_i\in\supp$ a set of orthogonal simple roots, in order to calculate character-shifts and their commutators. We first prove some technical propositions: 

\begin{Proposition}
\label{ralphaeigenschaft}
 For $\alpha,\beta\in \Pi$ and any $X_{\mu} \in U_{\mu}^+$ we have for the powers of $r'_\alpha$
 $$\hspace{1cm} (r_{\alpha}')^n(X_\mu E_{\beta})=c_{\alpha}^n \; q^{(\mu,\alpha)}\;(r_{\alpha}')^{n-1}(X_{\mu})\;r_{\alpha}'(E_{\beta})+r_{\alpha}'^i(X_{\mu})E_{\beta},
 \qquad \text{with }c_{\alpha}^n:=q_{\alpha}^{1-i}[i]_{\alpha}.$$ 
\end{Proposition}
\begin{proof}
 The case $n=1$ is Lemma \ref{lemmaralpha1}. Inductively we easily get
 \begin{align*}
 (r_{\alpha}')^n(X_\mu E_{\beta})\\
 %=&r_{\alpha}'^{i-1}r_{\alpha}'(X_\mu E_{\beta})\\
 =&(r_{\alpha}')^{n-1}(q^{(\alpha,\mu)}X_{\mu}r_{\alpha}'(E_{\beta})+r_{\alpha}'(X_{\mu})E_{\beta})\\
 =&(r_{\alpha}')^{n-1}(q^{(\alpha,\mu)}X_{\mu}\delta_{\alpha\beta}+r_{\alpha}'(X_{\mu})E_{\beta})\\
 \text{\small (induction)}\quad{=}&q^{(\alpha,\mu)}(r_{\alpha}')^{n-1}(X_{\mu})\delta_{\alpha\beta}+c_{\alpha}^{n-1}q^{(\alpha,\mu-\alpha)}(r_{\alpha}')^{n-1}(X_{\mu})r_{\alpha}'(E_{\beta})+(r_{\alpha}')^n(X_{\mu})E_{\beta}\\
 =&(r_{\alpha}')^{n-1}(X_{\mu})r_{\alpha}'(E_{\beta})(q^{(\alpha,\mu)}+c_{\alpha}^{n-1}q^{(\alpha,\mu-\alpha)})+(r_{\alpha}')^n(X_{\mu})E_{\beta}.
 \end{align*}
Thus it remains to show $q^{(\alpha,\mu)}c_{\alpha}^n=q^{(\alpha,\mu)}+c_{\alpha}^{n-1}q^{(\alpha,\mu-\alpha)}$, which follows from the formula $q_{\alpha}^{1-n}[n]_{\alpha}=q_{\alpha}^{1-(n-1)}[n-1]_{\alpha}q_{\alpha}^{-2}+1$, which can be easily shown by direct computation.
% \begin{align*}
% & q_{\alpha}^{1-i}[i]_{\alpha}-1-q_{\alpha}^{1-(i-1)}[i-1]_{\alpha}q_{\alpha}^{-2}\\
%  =&q_{\alpha}^{1-i}\frac{q_{\alpha}^{i}-q_{\alpha}^{-i}}{q_{\alpha}-q_{\alpha}^{-1}}-1-q_{\alpha}^{1-(i-1)}\frac{q_{\alpha}^{i-1}-q_{\alpha}^{1-i}}{q_{\alpha}-q_{\alpha}^{-1}}q_{\alpha}^{-2}\\
%  =&(q_{\alpha}^{1-i}(q_{\alpha}^{i}-q_{\alpha}^{-i})-q_{\alpha}+q_{\alpha}^{-1}-q_{\alpha}^{-i}(q_{\alpha}^{i-1}-q_{\alpha}^{1-i}))\frac{1}{q_{\alpha}-q_{\alpha}^{-1}}\\
%  =&(q_{\alpha}-q_{\alpha}^{1-2i}-q_{\alpha}+q_{\alpha}^{-1}-q_{\alpha}^{-1}+q_{\alpha}^{1-2i})\frac{1}{q_{\alpha}-q_{\alpha}^{-1}}\\
% =&0
%  \end{align*}
\end{proof}
\begin{Proposition}\label{alphahochi}
 For any $X_{\mu} \in U_{\mu}^+$ 
$$\Delta(X_{\mu})=K_{\mu}\otimes X_{\mu}+\sum_{\alpha\in\Pi}\sum_i E_{\alpha}^iK_{\mu-i\alpha}\otimes s_{\alpha}^i(X_{\mu})+(rest),$$
where $(rest)$ contains terms in $U_{\nu}\otimes U$ for any $\nu\in \Lambda_+$ which is not a multiple of simple roots.
Here $s_{\alpha}^i(x)=z_{\alpha}^i\cdot (r_{\alpha}')^i(x)\in U^+$ for the constant $z_{\alpha}^i=\frac{z_{\alpha}^{i-1}}{q_{\alpha}^{2(i-1)}c_{\alpha}^i}=\frac{z_{\alpha}^{i-1}}{q_{\alpha}^{i-1}[i]}\in \C$.
\end{Proposition}
\begin{proof}
 We prove the statement inductively over the height $ht(\mu)$ of $X_{\mu}$.
 %$\mathbb{Z}-\Pi$- Grad der Elemente ($\deg(E_{\alpha})=\alpha$ für alle $\alpha\in\Pi$) 
 We look at words in the generators $E_{\alpha},\alpha\in \Pi$. For $ht(\mu)=1$, i.e. $X_{\mu}=E_{\alpha}$ for an $\alpha\in\Pi$, the claim is trivial.
Assume the claim is true for elements $X_{\mu}\in U_{\mu}$, then we prove the claim for $X_{\mu}E_{\beta}$ for an arbitrary $\beta\in\Pi$: 
 \begin{align*}
 \Delta(X_{\mu}E_{\beta})=&\Delta(X_{\mu})\Delta(E_{\beta})\\
 =&(K_{\mu}\otimes X_{\mu}+\sum_{\alpha\in\Pi}\sum_i E_{\alpha}^iK_{\mu-i\alpha}\otimes s_{\alpha}^i(X_{\mu})+(rest))(K_{\beta}\otimes E_{\beta}+E_{\beta}\otimes 1)\\
 =&K_{\mu}E_{\beta}\otimes X_{\mu}+K_{\mu}K_{\beta}\otimes X_{\mu}E_{\beta}+\sum_{\alpha\in\Pi}\sum_i E_{\alpha}^iK_{\mu-i\alpha}E_{\beta}\otimes s_{\alpha}^i(X_{\mu})\\
 &+\sum_{\alpha\in\Pi}\sum_i E_{\alpha}^iK_{\mu-i\alpha}K_{\beta}\otimes s_{\alpha}^i(X_{\mu})E_{\beta}+(rest)',
 \end{align*}
where $(rest)'$ contains terms of $(rest)$ and $(rest) E_{\beta}$. We want to prove that:

\begin{equation}\label{komischegleichung}
\begin{split}
K_{\mu}E_{\beta}\otimes X_{\mu}+\sum_{\alpha\in\Pi}\sum_i E_{\alpha}^iK_{\mu-i\alpha}E_{\beta}\otimes s_{\alpha}^i(X_{\mu})\\
+\sum_{\alpha\in\Pi}\sum_i E_{\alpha}^iK_{\mu-i\alpha}K_{\beta}\otimes s_{\alpha}^i(X_{\mu})E_{\beta}+(rest)'\\
= \sum_{\alpha\in\Pi}\sum_i E_{\alpha}^iK_{\mu+\beta-i\alpha}\otimes s_{\alpha}^i(X_{\mu}E_{\beta})+(rest)_{new}. 
\end{split}
\end{equation}
Let us first consider the case $\alpha\neq\beta$, here 
$$K_{\mu}E_{\beta}\otimes X_{\mu}+\sum_{\alpha\in\Pi}\sum_i E_{\alpha}^iK_{\mu-i\alpha}E_{\beta}\otimes s_{\alpha}^i(X_{\mu})+(rest)\in (rest)_{new}.$$ 
It remains to prove that
$$\sum_{\alpha\in\Pi}\sum_i E_{\alpha}^iK_{\mu-i\alpha}K_{\beta}\otimes s_{\alpha}^i(X_{\mu})E_{\beta}
= \sum_{\alpha\in\Pi}\sum_i E_{\alpha}^iK_{\mu+\beta-i\alpha}\otimes s_{\alpha}^i(X_{\mu}E_{\beta})$$
This holds if and only if $s_{\alpha}^i(X_{\mu})E_{\beta}=s_{\alpha}^i(X_{\mu}E_{\beta})$. The latter, however, follows from the corresponding property of $r_{\alpha}'^i$.\\

Now we consider the case $\beta=\alpha$. 
Here: $X_{\mu}=X_{\mu}s_{\alpha}(E_{\alpha})$. So we can rewrite the left hand side of equation \eqref{komischegleichung} as:
\begin{align*}
K_{\mu}E_{\alpha}\otimes X_{\mu}s_{\alpha}(E_{\alpha})+&\sum_{\alpha\in\Pi}\sum_{j\geq2} q^{(\mu-(j-1)\alpha,\alpha)}E_{\alpha}^jK_{\mu+\alpha-j\alpha}\otimes s_{\alpha}^{j-1}(X_{\mu})s_{\alpha}(E_{\alpha})\\
&+\sum_{\alpha\in\Pi}\sum_i E_{\alpha}^iK_{\mu-i\alpha}K_{\alpha}\otimes s_{\alpha}^i(X_{\mu})E_{\beta}+(rest)'\\
=&\sum_{\alpha\in\Pi}\sum_{j\geq1} q^{(\mu-(j-1)\alpha,\alpha)}E_{\alpha}^jK_{\mu+\alpha-j\alpha}\otimes s_{\alpha}^{j-1}(X_{\mu})s_{\alpha}(E_{\alpha})\\
&+\sum_{\alpha\in\Pi}\sum_i E_{\alpha}^iK_{\mu+\alpha-i\alpha}\otimes s_{\alpha}^i(X_{\mu})E_{\beta}+(rest)'.
\end{align*}
Comparing this to the right hand side of equation \eqref{komischegleichung}, it remains to prove that for all $i$:
$$s_{\alpha}^i(X_{\mu})E_{\beta}+q^{(\mu-(i-1)\alpha,\alpha)}s_{\alpha}^{i-1}(X_{\mu})s_{\alpha}(E_{\alpha})=s_{\alpha}^i(X_{\mu}E_{\alpha}).$$
This in turn follows directly from the definition of $s_{\alpha}$ and Proposition \ref{ralphaeigenschaft}.
\end{proof}
The following assertion may be obvious for the expert reader:
\begin{Proposition}\label{ralphakommutieren}
 For all $r_{\alpha}'$ and $r_{\alpha}$ as in Definition \ref{ralpha} the following equations hold:\\
 a) For all $\alpha,\beta\in\Pi$:
 $$r_{\alpha}r_{\beta}'=r_{\beta}'r_{\alpha}.$$
 b) For $\alpha,\beta\in\Pi$ with either $\alpha=\beta$ or $\alpha\perp\beta$:
 $$r_{\alpha}r_{\beta}=r_{\beta}r_{\alpha}.$$
 c) For $\alpha,\beta\in\Pi$ with either $\alpha=\beta$ or $\alpha\perp\beta$:
 $$r_{\alpha}'r_{\beta}'=r_{\beta}'r_{\alpha}'.$$
\end{Proposition}
\begin{proof}
We prove the statements again by induction on $ht(\mu)$ for elements $X_{\mu}\in U^+$:\\
 a) For $\mu\in\Pi$ both sides are $0$. For an arbitrary $\gamma\in\Pi$, $\mu\in \Lambda_+$ the product $X_{\mu}E_{\gamma}$:
 \begin{align*}
 r_{\alpha}r_{\beta}'(X_{\mu}E_{\gamma})=&r_{\alpha}(q^{(\beta,\mu)}X_{\mu}\delta_{\beta\gamma}+r_{\beta}'(X_{\mu})E_{\gamma})\\
 =&q^{(\beta,\mu)}r_{\alpha}(X_{\mu})\delta_{\beta\gamma}+r_{\beta}'(X_{\mu})\delta_{\alpha\gamma}+q^{(\gamma,\alpha)}r_{\alpha}r_{\beta}'(X_{\mu})E_{\gamma}.
 \end{align*}
On the other side:
 \begin{align*}
 r_{\beta}'r_{\alpha}(X_{\mu}E_{\gamma})=&r_{\beta}'(X_{\mu}\delta_{\alpha\gamma}+q^{(\alpha,\gamma)}r_{\alpha}(X_{\mu})E_{\gamma})\\
 =&r_{\beta}'(X_{\mu})\delta_{\alpha\gamma}+q^{(\beta,\alpha)}q^{(\beta,\mu-\alpha)}r_{\alpha}(X_{\mu})\delta_{\beta\gamma}+q^{(\gamma,\alpha)}r_{\beta}'r_{\alpha}(X_{\mu})E_{\gamma}\\
 =&r_{\beta}'(X_{\mu})\delta_{\alpha\gamma}+q^{(\beta,\alpha)}q^{(\beta,\mu-\alpha)}r_{\alpha}(X_{\mu})\delta_{\beta\gamma}+q^{(\gamma,\alpha)}r_{\alpha}r_{\beta}'(X_{\mu})E_{\gamma}.
 \end{align*}

b) Here for $\mu\in\Pi$ both sides are $0$. For $\mu\in \Lambda_+,\gamma\in \Pi$ and arbitrary $\alpha,\beta\in\Pi$ we consider first $r_{\alpha}r_{\beta}$:
 \begin{align*}
 r_{\alpha}r_{\beta}(X_{\mu}E_{\gamma})=&r_{\alpha}(X_{\mu}\delta_{\beta\gamma}+q^{(\beta,\gamma)}r_{\beta}(X_{\mu})E_{\gamma})\\
 =&r_{\alpha}(X_{\mu})\delta_{\beta\gamma}+q^{(\beta,\gamma)}r_{\beta}(X_{\mu})\delta_{\alpha\gamma}+q^{(\beta,\gamma)}q^{(\gamma,\alpha)}r_{\alpha}r_{\beta}(X_{\mu})E_{\gamma}.
 \end{align*}
 On the other side we get for $r_{\beta}r_{\alpha}$:
\begin{align*}
 r_{\beta}r_{\alpha}(X_{\mu}E_{\gamma})=r_{\beta}(X_{\mu})\delta_{\alpha\gamma}+q^{(\gamma,\alpha)}r_{\alpha}(X_{\mu})\delta_{\beta\gamma}+q^{(\beta,\gamma)}q^{(\gamma,\alpha)}r_{\beta}r_{\alpha}(X_{\mu})E_{\gamma}.
 \end{align*}
By the induction hypothesis, the two sides are equal, if
$$r_{\alpha}(X_{\mu})\delta_{\beta\gamma}+q^{(\beta,\gamma)}r_{\beta}(X_{\mu})\delta_{\alpha\gamma}=r_{\beta}(X_{\mu})\delta_{\alpha\gamma}+q^{(\gamma,\alpha)}r_{\alpha}(X_{\mu})\delta_{\beta\gamma}.$$
This is true if and only if $\alpha=\beta$ or $(\alpha,\beta)=0$. The proof of c) is analogous to b).
\end{proof}

We can now state our generalizations of the coproduct formula in Definition \ref{ralpha} and the product formula in Lemma \ref{lemmaralpha1} for our purposes:

\begin{definition}\label{def_rProd}
Fix a set $\supp\subset \Pi$ consisting of pairwise orthogonal simple roots. 
Then for any 
$\bar{\alpha}=\sum_{\alpha_k\in \supp} i_k\alpha_k$ in $\N\supp\subset \Lambda_+$ we define 
  $$r_{\bar{\alpha}}(x):=\left(\prod_k (r_{\alpha_k})^{i_k}\right)(x)\in U^+.$$
This is well-defined since the $r_{\alpha_k}$ commute by Proposition \ref{ralphakommutieren}.
\end{definition}

\begin{Lemma} \label{ralphabar}
 Fix $\supp, \bar{\alpha},r_{\bar{\alpha}}$ as in Definition \ref{def_rProd}, then for any elements $X_{\mu}\in U^+_{\mu}$ and simple root vector $E_{\gamma}$ holds the following generalization of Lemma \ref{lemmaralpha1} and \ref{ralphaeigenschaft}:
 $$r_{\bar{\alpha}}(X_{\mu}E_{\gamma})=\sum_{k\in \supp(\bar{\alpha})}c_{\alpha_k}^{i_k}q^{(\mu,\alpha_k)}r_{\bar{\alpha}-\alpha_k}(X_{\mu})r_{\alpha_k}(E_{\beta})+r_{\bar{\alpha}}(X_{\mu})E_{\beta}.$$
 Moreover, in generalization of Proposition \ref{alphahochi} we have the coproduct formula
 $$\Delta(X_{\mu})=K_{\mu}\otimes X_{\mu}+\sum_{\bar{\alpha}}E_{\bar{\alpha}}K_{\mu-\bar{\alpha}}\otimes s_{\bar{\alpha}}(X_{\mu}) +(rest),$$
 where $z_{\bar{\alpha}}=\prod_kz_{\alpha_k}^{i_k}\in \C$ and $(rest)$ contains terms in $U_{\nu}\otimes U_{\mu-\nu}$, such that $\nu\in \Lambda$ is not a linear combination of pairwise orthogonal roots $\alpha_k\in \supp$. 
\end{Lemma}
\begin{proof}

We prove the first statement with induction on $|\supp(\bar{\alpha})|$. For $n=1$ the claim follows from Lemma \ref{ralphaeigenschaft}.
Consider a root as above $\bar{\alpha}=\sum_ki_k\alpha_k$, then for an arbitrary $j$ with $i_j\neq0$ in particular we get $r_{\bar{\alpha}}=r_{\alpha_j}^{i_j}\prod_{k\neq j}r_{\alpha_k}^{i_k}$. 
We call $\sum_{k\neq j}i_k\alpha_k=\bar{\alpha}'$, so $\bar{\alpha}=\bar{\alpha}'+i_j\alpha_j$ and for the root $\bar{\alpha}'$ the claim follows from the induction hypothesis.
 \begin{align*}
 r_{\bar{\alpha}}(X_{\mu}E_{\gamma})=&r_{\alpha_j}^{i_j}r_{\bar{\alpha}'}(X_{\mu}E_{\gamma})\\
 =&r_{\alpha_j}^{i_j}(\sum_{k\in \supp(\bar{\alpha}')}c_{\alpha_k}^{i_k}q^{(\mu,\alpha_k)}r_{\bar{\alpha}'-\alpha_k}(X_{\mu})r_{\alpha_k}(E_{\beta})+r_{\bar{\alpha}'}(X_{\mu})E_{\beta})\\
 =&\sum_{k\in \supp(\bar{\alpha}')}c_{\alpha_k}^{i_k}q^{(\mu,\alpha_k)}\underbrace{r_{\bar{\alpha}'+i_j\alpha_j-\alpha_k}}_{r_{\bar{\alpha}-\alpha_k}}(X_{\mu})\delta_{\alpha_k\beta}+
c_{\alpha_j}^{i_j}q^{(\alpha_j,\mu)}\underbrace{r_{\bar{\alpha}'}r_{\alpha_j}^{(i_j-1)}}_{r_{\bar{\alpha}-\alpha_j}}(X_{\mu})r_{\alpha_j}(E_{\beta})\\
& +r_{\bar{\alpha}'+i_j\alpha_j}(X_{\mu})E_{\beta}\\
 =&\sum_{k\in \supp(\bar{\alpha})}c_{\alpha_k}^{i_k}q^{(\mu,\alpha_k)}r_{\bar{\alpha}-\alpha_k}(X_{\mu})\delta_{\alpha_k\beta}+r_{\bar{\alpha}}(X_{\mu})E_{\beta}.
 \end{align*}
 This proves the first statement.  The proof of the second statement works then analogous to the proof of Proposition \ref{alphahochi}
 \end{proof}
  
\begin{corollary}\label{charakteralssumme}
Let $\phi$ be a character on a right coideal subalgebra $A$ of $\psi U^+$, whose support $\supp(\phi)$ contains only orthogonal simple roots. For any element $X_\mu\in A$ in degree $\mu$ the character-shift
$$\bar{X}_{\mu}:=(\phi\otimes id)\Delta(X_{\mu})$$
has the following expression in terms of $s_{\bar{\alpha}}$ from Lemma \ref{ralphabar}
$$\bar{X}_{\mu}=X_{\mu}+\sum_{0<\bar{\alpha}< \mu}\lambda_{\bar{\alpha}}s_{\bar{\alpha}}(X_{\mu}),$$
where $\bar{\alpha}=\sum_{\alpha_k\in\supp} i_k \alpha_k$ and we abbreviate $\lambda_{\bar{\alpha}}:=\prod \phi(E_{\alpha_k})^{i_k}$. 
\end{corollary}

We finally prove as main result the commutator relations we asserted:

\begin{proof}[Proof of Lemma \ref{lm_commutator}]
 Consider an element $r_{\alpha}'(s_{\bar{\alpha}}(X_{\mu}))$. By definition, with
 %$$r_{\alpha}'(s_{\bar{\alpha}}(X_{\mu}))=\frac{x_{\alpha}^{i-1}}{x_{\alpha}^{i}}s_{\bar{\alpha}+\alpha}(X_{\mu})$$
 $i=\frac{(\bar{\alpha},\alpha)}{(\alpha,\alpha)}+1$
 $$r_{\alpha}'(s_{\bar{\alpha}}(X_{\mu}))=s_{\bar{\alpha}+\alpha}(X_{\mu})q_{\alpha}^{-1}[i]=s_{\bar{\alpha}+\alpha}(X_{\mu})(q_{\alpha}^{-i-1}\frac{q_{\alpha}^{2i}-1}{q_{\alpha}-q_{\alpha}^{-1}}).$$
Inserting \ref{charakteralssumme} in the asserted commutator-formula 
$[\bar{F}_\alpha,\bar{E}_\mu]_{q^{-(\alpha,\mu)}}=0$ yields
 \begin{align*}
 [F_{\alpha}+\lambda_{\alpha}'K_{\alpha}^{-1},\bar{X}_{\mu}]_{q^{-(\mu,\alpha)}}=\sum_{0\leq \bar{\alpha}}\lambda_{\bar{\alpha}}([F_{\alpha},s_{\bar{\alpha}}(X_{\mu})K_{\mu}^{-1}]_{q^{-(\mu,\alpha)}}+
 \lambda_{\alpha}'[K_{\alpha}^{-1},s_{\bar{\alpha}}(X_{\mu})K_{\mu}^{-1}]_{q^{-(\mu,\alpha)}}).
 \end{align*}
So to prove the commutator-formula, we have to show
 \begin{align}\label{zuzeigen}
 \sum_{0\leq \bar{\alpha}}\lambda_{\bar{\alpha}}[F_{\alpha},s_{\bar{\alpha}}(X_{\mu})K_{\mu}^{-1}]_{q^{-(\mu,\alpha)}}=-\sum_{0\leq \bar{\alpha}}\lambda_{\bar{\alpha}}\lambda_{\alpha}'[K_{\alpha}^{-1},s_{\bar{\alpha}}(X_{\mu})K_{\mu}^{-1}]_{q^{-(\mu,\alpha)}}.
\end{align}
To see this we use Lemma \ref{lemmaralpha2}, this yields for $\alpha\in\Pi$ and $x\in U_{\mu}^+$
\begin{align}\label{verwendelemma}
 xF_{\alpha}-F_{\alpha}x=(q_{\alpha}-q_{\alpha}^{-1})^{-1}(r_{\alpha}(x)K_{\alpha}-K_{\alpha}^{-1}r'_{\alpha}(x)).
\end{align}

Consider first the left hand side of (\ref{zuzeigen}):
\begin{align*}
 \sum_{0\leq \bar{\alpha}}\lambda_{\bar{\alpha}}[F_{\alpha},s_{\bar{\alpha}}(X_{\mu})K_{\mu}^{-1}]_{q^{-(\mu,\alpha)}} =& \sum_{0\leq \bar{\alpha}}\lambda_{\bar{\alpha}}(F_{\alpha}s_{\bar{\alpha}}(X_{\mu})K_{\mu}^{-1}-q^{-(\mu,\alpha)}s_{\bar{\alpha}}(X_{\mu})K_{\mu}^{-1}F_{\alpha})\\
 =&\sum_{0\leq \bar{\alpha}}\lambda_{\bar{\alpha}}(F_{\alpha}s_{\bar{\alpha}}(X_{\mu})-s_{\bar{\alpha}}(X_{\mu})F_{\alpha})K_{\mu}^{-1}\\
 \text{\small (use \ref{verwendelemma})} \quad 
 =&-\sum_{0\leq \bar{\alpha}}\lambda_{\bar{\alpha}}(q_{\alpha}-q_{\alpha}^{-1})^{-1}(r_{\alpha}(s_{\bar{\alpha}}(X_{\mu}))K_{\alpha}-K_{\alpha}^{-1}r_{\alpha}'(s_{\bar{\alpha}}(X_{\mu})))K_{\mu}^{-1}\\
 \text{\small ($X_\mu$ now root vector; use Cor. \ref{rist0neu})} \quad 
 =&\sum_{0\leq \bar{\alpha}}\lambda_{\bar{\alpha}}(q_{\alpha}-q_{\alpha}^{-1})^{-1}q^{-(\alpha,\mu-\bar{\alpha}-\alpha)}r_{\alpha}'(s_{\bar{\alpha}}(X_{\mu}))K_{\mu}^{-1}K_{\alpha}^{-1}\\
 =&\sum_{0\leq \bar{\alpha}}\lambda_{\bar{\alpha}}s_{\bar{\alpha}+\alpha}(X_{\mu})K_{\mu}^{-1}K_{\alpha}^{-1}q_{\alpha}^{-i-1}\frac{q_{\alpha}^{2i}-1}{q_{\alpha}-q_{\alpha}^{-1}}(q_{\alpha}-q_{\alpha}^{-1})^{-1}q^{-(\alpha,\mu)}q_{\alpha}^i\\
 =&\sum_{\alpha\leq \bar{\alpha}}\lambda_{\bar{\alpha}}\lambda_{\alpha}^{-1}s_{\bar{\alpha}}(X_{\mu})K_{\mu}^{-1}K_{\alpha}^{-1}q_{\alpha}^{-1}\frac{q_{\alpha}^{2i-2}-1}{q_{\alpha}-q_{\alpha}^{-1}}(q_{\alpha}-q_{\alpha}^{-1})^{-1}q^{-(\alpha,\mu)}.
 \end{align*}
On the right hand side of the equation (\ref{zuzeigen}) we have:
\begin{align*}
 -\sum_{0\leq \bar{\alpha}}\lambda_{\bar{\alpha}}\lambda_{\alpha}'[K_{\alpha}^{-1},s_{\bar{\alpha}}(X_{\mu})K_{\mu}^{-1}]_{q^{-(\mu,\alpha)}}=&-\sum_{0\leq \bar{\alpha}}\lambda_{\bar{\alpha}}\lambda_{\alpha}'(K_{\alpha}^{-1}s_{\bar{\alpha}}(X_{\mu})K_{\mu}^{-1}-q^{-(\mu,\alpha)}s_{\bar{\alpha}}(X_{\mu})K_{\mu}^{-1}K_{\alpha}^{-1}\\
 =&-\sum_{\alpha\leq \bar{\alpha}}\lambda_{\bar{\alpha}}\lambda_{\alpha}'s_{\bar{\alpha}}(X_{\mu})K_{\mu}^{-1}K_{\alpha}^{-1}q^{-(\mu,\alpha)}(q_{\alpha}^{2i-2}-1).
\end{align*}
Comparing both sides we get:
\begin{align*}
&\sum_{\alpha\leq \bar{\alpha}}\lambda_{\bar{\alpha}}\lambda_{\alpha}^{-1}s_{\bar{\alpha}}(X_{\mu})K_{\mu}^{-1}K_{\alpha}^{-1}q_{\alpha}^{-1}\frac{q_{\alpha}^{2i-2}-1}{q_{\alpha}-q_{\alpha}^{-1}}(q_{\alpha}-q_{\alpha}^{-1})^{-1}q^{-(\alpha,\mu)}q_{\alpha}^2\\
& =-\sum_{\alpha\leq \bar{\alpha}}\lambda_{\bar{\alpha}}\lambda_{\alpha}'s_{\bar{\alpha}}(X_{\mu})K_{\mu}^{-1}K_{\alpha}^{-1}q^{-(\mu,\alpha)}(q_{\alpha}^{-2i}-1).
 \end{align*}
%It remains to prove that:
%$$\lambda_{\alpha}^{-1}q_{\alpha}^{-1}\frac{q_{\alpha}^{2i-2}-1}{q_{\alpha}-q_{\alpha}^{-1}}(q_{\alpha}-q_{\alpha}^{-1})^{-1}q_{\alpha}^2
%=-\lambda_{\alpha}'(q_{\alpha}^{2i-2}-1)$$
But this follows from the condition on $\lambda_\alpha^+\lambda_\alpha^-$.
\end{proof}

\subsection{Proof of Theorem \ref{thm_main} b)}\label{sec_Proof_b}
For $\supp,x,L,\phi_\pm,\lambda_\alpha^\pm$ as in Definition \ref{def_height0} we prove that the triangular right coideal subalgebra 
 $$C:=U^-[x]_{\phi_-}^{op}\C[L]\psi U^+_{\phi_+}$$
is a basic algebra i.e. every finite-dimensional irreducible representation is one-dimensional. 
\begin{remark}
 The following proof works for smaller Weyl group elements than ${w_+=w_0}$. The proof crucially relies on the large choice $L=\supp^\perp$, but we believe this is not necessary for the assertion. However, for smaller $L$ the set of irreducible modules is larger.
\end{remark}
The following fact slightly expands the fact that standard Borel subalgebras are basic: 
\begin{lemma}\label{lm_stdBorel}
  Let $w\in W$ and $L\subset \Lambda$, such that for every $\mu\in \Phi^+(w)$ there exists a $\nu\in L$ with $(\mu,\nu)\neq 0$. Then the right coideal subalgebra $C':= U^-[w]^{op}\C[L]$ is basic and $U^-[w]$ acts on all irreducible modules by the counit. The same holds for $\C[L]\psi U^+[w]$.   
\end{lemma}
\begin{proof}
From Chapter 6 in \cite{HS09} we know, that $U^-[w]$ with $w=s_{\alpha}v$ is isomorphic to a smash
product $\C[F_{\alpha}]\#T_{\alpha}(U^-[v])$.
We prove our claim by induction on the length $\ell(w)$. For $\ell(w)=0$ the assertion is trivially true, so assume that the assertion holds for the factor $C'':=T_{\alpha}(U^-[v])\C[L]$, which is isomorphic as an algebra to $U^-[v]\C[s_{\alpha}^{-1}(L)]$.\\

Let $V$ be a $C'$-module. By induction, the restriction $V|_{C''}$ has one-dimensional composition factors, on which $T_{\alpha}(U^-[v])$ acts via $\epsilon$. So on $V|_{C''}$ all elements in $\ker(\epsilon)$ act nilpotently. We consider the nontrivial subspace 
$$V_0:=\{v\in V\mid h.v=\epsilon(h)v \;\forall h\in T_{\alpha}(U^-[v])\}.$$
Moreover we know that $\mathrm{ad}_{F_\alpha}$ acts on $T_{\alpha}(U^-[v])$ in the smash product, so 
for any $X_{\mu}\in T_{\alpha}(U^-[v])_{\mu}$ we can calculate
$$X_{\mu}F_{\alpha}.v=q^{(\mu,\alpha)}F_{\alpha}X_{\mu}.v+Y.v=0\text{ with some }Y\in T_{\alpha}(U^-[v]),$$
where we used that $X_\mu,Y$ acts by $\epsilon(X_\mu)=\epsilon(Y)=0$.
It follows that $F_\alpha$ preserves $V_0$ and because $V$ was assumed irreducible
we get $V=V_0$. Now assume $F_\alpha$ has an eigenvalue $t\neq 0$ on $V$, then the assumed existence of $\nu\in L$ with $(\alpha,\nu)\neq 0$ implies 
$$K_\nu F_\alpha= q^{-(\alpha,\nu)}F_\alpha K_\nu,$$ 
so $F_\alpha^n$ on $V$ has eigenvalues $tq^{n(\alpha,\nu)},n\in\Z$. For $q$ not a root of unity this is a contradiction to finite dimensionality. Hence also $F_\alpha$ acts by zero. \\

The two coideal algebras $U^-[w]^{op}\C[L]$ and $\C[L]\psi U^+[w]$ in the assertion are isomorphic to $U^-[w]\C[L]$ as algebras. Thus also these algebras are basic. 
\end{proof}

We now return to our right coideal subalgebra in question 
$$C:=U^-[x]^{op}_{\phi_-}\C[L]\psi U^+_{\phi_+}.$$
Define $x=\prod_{\alpha_k\in\supp}s_k$, then 
$w_0=x w'$ as reduced expression.
Since $T_{x}(\C[L])=\C[L]$ and since the $T_{s_k}$ are algebra isomorphisms, we have in $C$ a graded subalgebra $T_x(C')$ isomorphic to 
$$C':=\C[L]\psi U^+[w'].$$
Let $V$ be a representation of $C$. We apply Lemma \ref{lm_stdBorel} to $C'$, which asserts that we have a nontrivial $T_{\alpha}(C')$-submodule 
$$V_0:=\{v\in V\mid h.v=\epsilon(h)v \;\forall h\in T_{\alpha}(U^-[w'])\}.$$
We claim that $V_0$ is a $C$-submodule, hence by irreducibility $V_0=V$: Character-shifts $\bar{E}_\alpha$ have again an adjoint action on $T_{\alpha}(U^-[w'])$, so if the latter acts trivial, they q-commute with them. Character-shifts $\bar{F}_\alpha$ q-commute with them by the commutator formula \ref{lm_commutator}. 
\begin{corollary}\label{cor_irreps}
The action of $C$ on an irreducible module $V$ factors over the algebra 
$$\bar{C}=\C[L]\bigotimes_{\alpha_k\in\supp} \langle \bar{E}_\alpha, \bar{F}_\alpha\rangle,$$
where each $\langle \bar{E}_\alpha, \bar{F}_\alpha\rangle$ is a quantum Weyl algebra by choice of $\lambda^+_\alpha\lambda^-_\alpha$.
\end{corollary}
In particular $V$ is $1$-dimensional and thus $C$ a basic algebra, which concludes the proof of Theorem \ref{thm_main} b).

\subsection{Proof of Theorem \ref{thm_main} c)}

We prove in this section that the triangular basic right coideal subalgebra $C$ with  $\supp,x,L,\phi_\pm,\lambda_\alpha^\pm$ as in Definition \ref{def_height0} is maximal among all 
basic right coideal subalgebras, so it is a Borel subalgebra.

\begin{Lemma}\label{lm_triangular}
 $C$ is maximal among all \emph{triangular} basic right coideal subalgebras. Since $L$ and $\psi U^+_{\phi^+}$ are already maximal, this means proving that any  $\tilde{C}:=U^-[xs_i]^{op}_{\phi_-}\C[L] \psi U^+_{\phi^+}$ cannot be basic (if it is indeed an algebra).
\end{Lemma}
\begin{proof}
 This would be a corollary of our new structural results in \cite{LV19} Lemma 6.3 on the graded algebra of triangular right coideal subalgebras, where they apply.\\
 
 However in this small case, the same argument can be done quickly by hand: The new character-shifted root vector $\bar{F}_{x(\alpha_i)}$, with all reflections in $x$ commuting, can be written down easily and has as highest term a nonzero multiple of $F_{\alpha_i}$. We remark this is an easy special case of \cite{LV19} Lemma 5.20. Since also $E_{\alpha_i}\in C$, we have a copy of $U_q(\sl_2)$ in $C$, and it is easy to construct a higher-dimensional irreducible representation of $C$ by restricting an irreducible $U_q(\g)$-module $L(\lambda)$ with $(\lambda,\alpha_i)\neq 0$ to $C$. We remark this is an easy special case of \cite{LV19} Lemma 6.3.
\end{proof}

Disproving the existence of a non-triangular basic right coideal subalgebra $\tilde{C}$ is in general very difficult, compare the restriction in the end of Theorem \ref{triangerweitert} and the restrictions in Chapter \ref{def_height0}. However, in the present case we can rely on the maximality of $\psi U^+_{\phi^+}$ to reduce it to the triangular case, as follows:
\begin{Proposition}
   If $\tilde{C}$ is a Borel subalgebra, then $\tilde{C}\cap U^0$ is a Hopf algebra. 
\end{Proposition}
\begin{proof}
 For $K_\lambda\in \tilde{C}$ any finite-dimensional irreducible representation of $\tilde{C}\langle K_\lambda^{-1}\rangle$ restricts to an irreducible $\tilde{C}$-representation, so $\tilde{C}\langle K_\lambda^{-1}\rangle$ is another basic right coideal subalgebra.    
\end{proof}
\begin{Lemma}
  Let $\tilde{C}$ be a right coideal subalgebra with $\tilde{C}\cap U^0$ a Hopf algebra and assume that $\psi U^+_{\phi^+}\subset \tilde{C}$. Then  $\tilde{C}$ is already triangular.
\end{Lemma}
\begin{proof}
 Take a set of generators of $\tilde{C}$ together with the character-shifted $\bar{E}_\mu K_\mu^{-1}$. For every generator not contained in $U^-U^0$, we consider the terms in positive highest degree and subtract a suitable polynomial in the $\bar{E}_\mu$. This process terminates if we found a set of generators contained in $U^-U^0$ together with the generators $\bar{E}_\mu K_\mu^{-1}$.  
\end{proof}
The case of triangular $\tilde{C}$ was already treated in Lemma \ref{lm_triangular}, so this concludes the proof of Theorem \ref{thm_main} c).\\

\begin{remark}
We remark that finer results (for example in concrete examples of non-triangular coideal subalgebras) can be obtained from the following theorem \cite{Vocke18} Theorem 3.11, which was proven by iteratively simplifying an generator set of an arbitrary right coideal subalgebra using commutators and coproducts: \\

 Let $\tilde{C}$ be a right coideal subalgebra with $\tilde{C}\cap U^0$ a Hopf algebra, then there exists a set of algebra generators, which are of the form 
 $$a\cdot(F_{\nu})_{\phi_F} + \cdots  + c\cdot (E_{\mu})_{\phi_E} K_{\mu-\nu},$$
 where all intermediate terms are in degree $-\nu\lneq \lambda \lneq \mu$. One might even hope for a set of generators
 $$a\cdot(F_{\nu})_{\phi_F} + b\cdot K_{-\nu} + c\cdot (E_{\mu})_{\phi_E} K_{\mu-\nu},$$
 for $a,b,c\in\C$ and with respect to suitable reduced expressions.
\end{remark}
 
\subsection{Classification}\label{sec_classification}

We conjecture that the following holds (and ask if the additional condition can be removed)
\begin{conjecture}\label{conj_triangular_height0}
  Every triangular Borel subalgebra, so necessarily of the form 
 $$C:=U^-[w_-]_{\phi_-}^{op}\C[L]\psi U^+[w_+]_{\phi_+}$$
  that fulfills the full-support condition
  $$\Phi^+(w_+)\cap\Phi^+(w_-)=\supp(\phi_+)\cap \supp(\phi_-)$$
  and in addition fulfills $\supp(\phi_+)=\supp(\phi_-)$, is isomorphic (via Lusztig's automorphisms\footnote{The reader be advised, that in general the Lusztig automorphism does not preserve coideals.}) to one of the algebras $C$ in Definition \ref{def_height0}.  
\end{conjecture}

We can prove this depending on another conjecture about filling up Weyl group elements, which seems very natural but hard. For type $A_n$ it was proven in \cite{Vocke16} Section 3.1.3. We would be very interested in a general proof.

\newcommand{\Set}{\supp}
\begin{conjecture}\label{weyltheorem}
Fix $w_+,w_-\in W$ and assume $\Set:=\Phi^+(w_+)\cap\Phi^+(w_-)$ is not empty and consists of pairwise orthogonal roots. Then equivalently:
 %Dann gibt es Elemente $w_1',w_2'\in W$ mit $\Phi^+(w_1)\subseteq\Phi^+(w_1')$, $\Phi^+(w_2)\subseteq\Phi^+(w_2')$, sodass folgende Relationen erfüllt sind
 \begin{enumerate}
 \item There exist elements $w_+',w_-'\in W$ with $\Phi^+(w_+)\subseteq\Phi^+(w_+')$, $\Phi^+(w_-)\subseteq\Phi^+(w_-')$, such that the following relations hold:
 $$\Phi^+(w_+')\cap\Phi^+(w_-')=\Set\text{ and }\Phi^+(w_+')\cup\Phi^+(w_-')=\Phi^+.$$
 \item There exists an element $w_+''\in W$ such that $\Phi^+(w_+)\subseteq\Phi^+(w_+'')$ and $w_+''$ has a reduced expression  with $\Set=\{\beta_{\ell(w_1'')-|\Set|+1},\ldots, \beta_{\ell(w_1'')}\}.$
 %(In diesem Fall gibt es auch ein $w_2'$ mit derselben Eigenschaft)
 \end{enumerate}
 We prove their equivalence at the end of this section, and comment on the conjecture.
\end{conjecture}

%%%%%%%%%%%%%%%%%%%%%%%%%%%%%%%%%%%%%%

The rest of this section is devoted to proving Conjecture \ref{conj_triangular_height0} from Conjecture \ref{weyltheorem}(1):

\begin{theorem}\label{triangerweitert}
 Assume we are given a triangular right coideal subalgebra  
 $$C=U^-[w_-]^{op}_{\phi_-}\C[L]\psi U^+[w_+]_{\phi_+},$$
 such that $\Phi^+(w_+)\cap\Phi^+(w_-)=\supp(\phi_+)=\supp(\phi_-)=:\Set$, and let $x=\prod_{\alpha_k\in\supp}s_{\alpha_k}$. Now assume the existence of elements $w_+'>w_+,w_-'>w_-$ as in Conjecture \ref{weyltheorem}. Then: 
 \begin{enumerate}[a)]
  \item The characters $\phi_\pm$ can be extended to characters $\phi_\pm'$ on $\Phi^+(w_\pm')$, defining a coideal
  $$C'=U^-[w_-']^{op}_{\phi_-'}\C[L]\psi U^+[w_+']_{\phi_+'}.$$
  \item By definition of $x$ we can write $w_-'=v^{-1}x$ with $\ell(w_-')=\ell(v^{-1})+\ell(x)$, moreover by definition of $w_-',w_+'$ we have $vw_+'=w_0$ the longest element. Then the Lusztig automorphism $T_v$ maps $C'$ to 
  $$T_v(C')=U^-[x]^{op}_{T_v(\phi_-')}\C[v(L)]\psi U^+[w_0]_{T_v(\phi_+')}.$$
  In particular $C'$ and $T_v(C')$ are coideal subalgebras. 
  \item If $C$ is a Borel subalgebra, then $C=C'$ and $T_v(C)$ is one of the algebras in Definition \ref{def_height0}, which proves Conjecture \ref{conj_triangular_height0} 
 \end{enumerate}
 Note that the converse is not implied, since there could be non-triangular $C''\supset C$, such that $T_v(C'')$ is not a coideal subalgebra. At least our proof shows that $C'$ is maximal among all triangular basic right coideal subalgebras.   
\end{theorem}

\begin{proof}
\begin{enumerate}[a)]
 \item The extension of the characters follows from the following Remark in \cite{HK11b} p.12: Let $\Theta\subset \Phi^+(w)$ such that $\Theta$ contains pairwise orthogonal roots $\ell(\prod_{\beta\in\Theta}s_{\beta}w)=\ell(w)-|\Theta|$. Then this property implies that after deleting all reflections $s_{i}$ in a reduced expression of $w$, then the result is still a reduced expression. 
 \item We already know that for $w_-'$ there is a reduced expression $w_-'=s_{{i_1}}\ldots s_{{i_{\ell(w_-')}}}$, 
such that $\Set=\{\beta_{i_{\ell(w_-')-|B|+1}},\ldots, \beta_{i_{\ell(w_-')}}\}$.
We define: $v^{-1}:=s_{{i_1}}\ldots s_{{i_{\ell(w_-')-|\Set|}}}$ and 
$x:=s_{{i_{\ell(w_-')-|\Set|+1}}}\ldots s_{{i_{\ell(w_-')}}}$, such that $w_-'=v^{-1}x$
and from the choice of $w_+'$ we get $vw_+'=w_0$, as $\Phi^+(w_+')\cup\Phi^+(w_-')=\Phi^+$, 
moreover $\Phi^+(x)$ consists of pairwise orthogonal simple roots, by Definition of $\Set$.\\
Let us consider the PBW generators of $C$: As $C$ is triangular, all of them lie in $\psi U^+[w_+']_{\phi_+},U^-[v^{-1}x]^{op}_{\phi_-}$ and $\k[L]$. 
Let's consider first $U^-[v^{-1}x]^{op}_{\phi_-}$. Due to the choice of the reduced expression of $v^{-1}x$, the basis elements of $U^-[v^{-1}x]^{op}_{\phi_-}$
have the following form:
\begin{align*}
 \bar{F}_{\mu}=\begin{cases}
 F_{\mu}+\phi_-(F_{\mu})K_{\mu}^{-1} &\text{ for }\mu\in \supp(\phi_-),\\
 F_{\mu}&\text{ otherwise.}
 \end{cases}
\end{align*}
%$\bar{E_{\mu}}$ für ein $E_{\mu}$ Wurzelvektor von $U^+[x]$ gegeben durch entweder $\bar{E}_{\mu}=E_{\mu}K_{\mu}^{-1}$ oder $\bar{E_{\mu}}=E_{\mu}K_{\mu}^{-1}+\lambda_{\mu}K_{\mu}^{-1}$. 
Thus we get for the Lusztig automorphism $T_v$:
\begin{align*}
 T_v(\bar{F}_{\mu})=\begin{cases}
 T_v(F_{\mu})+\phi_-(F_{\mu})T_v(K_{\mu}^{-1})=&\\
 T_v(F_{\mu})+T_v(\phi_-)(T_v(F_{\mu}))T_v(K_{\mu}^{-1})=\overline{T_v(F_{\mu})}&\text{ for }\mu\in \supp(\phi_-),\\
 &\\
 T_v(F_{\mu})=\overline{T_v(F_{\mu})}&\text{ otherwise.}
 \end{cases}
\end{align*}
%entweder $T_w(\bar{E_{\mu}})= oder $T_w(\bar{E_{\mu}})=T_w(E_{\mu}K_{\mu}^{-1})+\lambda_{\mu}T_w(K_{\mu}^{-1})=\bar{T_w(E_{\mu}K_{\mu}^{-1})}$. 
The same is true for the basis elements of $\psi U^+[w_+']_{\phi_+}$.\\

With these considerations, we can argue analogously to non-character-shifted right coideal subalgebras in \cite{HK11a} and obtain the assertion: 
As $\Phi^+(w_+')\cup\Phi^+(w_-')=\Phi^+$ and $\Phi^+(w_+')\cap\Phi^+(w_-')=\Set$ we get $\ell(xw_+')=\ell(w_0)$, so:
\begin{align*}
 &T_v(U^-[v^{-1}x]^{op}_{\phi_-}\k[L']\psi U^+[w_+']_{\phi_+})=\\
 &T_v(U^-[v^{-1}]^{op}_{\phi_-}T_v^{-1}(U^-[x]^{op}_{T_v(\phi_-)}))T_v(\k[L'])T_v(\psi U^+[w_+']_{\phi_+})\\
=&T_vU^-[x]^{op}_{T_v(\phi_-)}T_v(\k[L'])\psi U^+[v]_{T_v({\phi_-})}T_v(\psi U^+[w_+']_{\phi_+})\\
=&U^-[x]^{op}_{T_v(\phi_-)}T_v(\k[L'])\psi U^+[vw_+']_{T_v(\phi_+)}\\
=&U^-[x]^{op}_{T_v(\phi_-)}T_v(\k[L'])\psi U^+[w_0]_{T_v(\phi_+)}.
\end{align*}

\item If the right coideal subalgebra 
$$T_v(C')= U^-[vw_-']^{op}_{T_v(\phi_-')}\C[v(L)]\psi U^+[x]_{T_v(\phi_+')}$$
would not be of the form asserted in Definition \ref{def_height0}, namely $vw_-'=w_0$, then we consider a respective larger triangular right coideal subalgebra $C''$, which is then basic, because $T_v(C'')$ is basic and $T_v$ is an algebra isomorphism. 
\end{enumerate}
\end{proof}

\begin{proof}[Proof of the equivalence in Conjecture \ref{weyltheorem} ] 
"`$\Rightarrow$"' Let $w_-',w_+'$, fulfilling 1) be given i.e. $\Phi^+(w_-')\cap\Phi^+(w_+')=\Set$ and $\Phi^+(w_-')\cup\Phi^+(w_+')=\Phi^+$.
Now we construct for $w_+'':=w_+'$ a reduced expression of the required form.\\

For this we look at $\bar{w}:=w_-'w_0$, where
$$\Phi^+(\bar{w})\subset\Phi^+(w_+').$$
Because for $\mu\in\Phi^+(\bar{w})$ already $\bar{w}^{-1}(\mu)< 0\Rightarrow w_0^{-1}w_-'^{-1}(\mu)< 0\Rightarrow w_-'^{-1}(\mu)> 0$ is true, so $\mu\notin\Phi^+(w_-')$
and from $\Phi^+(w_-')\cup\Phi^+(w_+')=\Phi^+$ it follows, that $\mu\in\Phi^+(w_+')$. %Due to the relation \ref{redexpvonteilmenge} 
We can choose now the following reduced expression of $w_+'$: $w_+'=\bar{w}x$ for some $x\in W$ with $\ell(x)=\ell(w_+')-\ell(\bar{w})$. It only remains to prove
$$\bar{w}(\Phi^+(x))=\Set.$$

"$\subset$": 
This is true, because the elements in $\Set$ are exactly the elements which lie in both $\Phi^+(w_-')$ and $\Phi^+(w_+')$. 
The elements on the left hand side lie in $\Phi^+(w_+')$ by construction and for $\nu\in\Phi^+(x)$ holds
$w_-'^{-1}\bar{w}(\nu)=w_0(\nu)< 0$, so they also lie in $\Phi^+(w_-')$. Thus we have found a suitable reduced expression and 2) holds.\\

"$\supset$": This is true, because $\Phi^+(\bar{w}x)=\Phi^+(w_+')\supset \Set$, on the other hand for all elements $\mu\in \Phi^+(w_-')$, so in particular for all elements $\mu\in \Set$ we have $\mu\notin \Phi^+(\bar{w})$, i.e. $\Set\cap \Phi^+(\bar{w})=\emptyset$. 
So we get $\Phi^+(\bar{w}x)\supset \Set$, but $\Set\cap \Phi^+(\bar{w})=\emptyset$, so $\bar{w}(\Phi^+(x))\supset\Set$.\\

%ist ja $\Phi^+(w_+)\cup\Phi^+(w_-)=\Phi^+$, es gilt also für alle $x\in\Phi^+$ mit $x\notin \Phi^+(w_+)$, dass $x$ in $\Phi^+(w_-)$ liegt. Für alle $x\in\Phi^+(\bar{w})$ gilt aber: 
%$\bar{w}^{-1}x<0\Rightarrow w_0^{-1}w_+^{-1}x<0\Rightarrow w_+^{-1}x>0\Rightarrow x\notin \Phi^+(w_+)\Rightarrow x\in \Phi^+(w_-)$. Jetzt gibt es wegen remark
% \ref{redexpvonteilmenge} eine reduzierte Darstellung von $w_-'$ von der Form $w_-'=\bar{w}x$ für ein $x\in W$ mit $\ell(x)=\ell(w_-')-\ell(\bar{w})$ und es gilt 
% $\bar{w}(\Phi^+(x))\subset \Set$, also ist die geforderte reduzierte Darstellung in 2)gegeben durch $w_-'=\bar{w}x$ mit $i=\ell(\bar{w})+1$.\\
 
 "`$\Leftarrow$"' Assume there exists a $w_+''=s_{1}\ldots s_{k}$ with $\Set=\{\beta_i,\ldots, \beta_k\}$ for an $i$.\\
 
 %dann suchen wir $w_+'$ und $w_-'$ die 1) erfüllen:
 Then we choose $w_+':=w_+''$ and $w_-':=s_{{1}}\ldots s_{{i-1}}w_0$ and show that these elements fulfill 1): 
 We know
 $$\Phi^+(w_+')\cup\Phi^+(w_-')=\Phi^+,$$
 as for $\mu\notin \Phi^+(w_-')$ we have $w_0^{-1}s_{{i-1}}\ldots s_{{1}}(\mu)> 0\Rightarrow s_{{i-1}}\ldots s_{{1}}(\mu)<0$,
 so $\mu\in\Phi^+(w_+')$. Moreover, we know
 $$\Phi^+(w_+')\cap\Phi^+(w_-')=\Set,$$
 as for all $\beta_j\in\Phi^+(w_+')$ is $\beta_j\in\Phi^+(w_-')\Leftrightarrow j>i\Leftrightarrow \beta_j\in\Set$. 
\end{proof}

We close by some remarks about Conjecture \ref{weyltheorem}, which seems to us very natural and useful in different context, but we could neither proof is nor find it in references:

A tuple $(w_+,w_-)$ with $\Phi^+(w_+)\cap\Phi^+(w_-)$ mutually orthogonal, that poses a violation for conjecture \ref{weyltheorem}, i.e. that cannot be prolonged, would mean that for all simple roots $\alpha_i$ we have $w_+(\alpha_i)<0$ or $w_-^{-1}w_+(\alpha_i)<0$, and $w_-(\alpha_i)<0$ or $w_+^{-1}w_-(\alpha_i)<0$. The conjecture states that this implies already $\Phi^+(w_+)\cup \Phi^+(w_-)=\Phi^+$.

It is sufficient to consider the case were $\Set\subset \Pi$, see \cite{Vocke16} p. 57. 

We remark that \cite{Vocke16} Section 3.1.3 claims that Conjecture \ref{weyltheorem} can be proven from the following conjecture, which also seems a surprisingly natural conjecture and yet difficult, and which she proves finally for type $A_n$ :
 $$\Pi\cap\Phi^+(w)\subset \Set \;\Longrightarrow\; \Pi\cap\Phi^+(w)= \Set.$$
We thank Giovanna Carnovale, who proved for us a weaker result in this direction:
\begin{lemma}\label{lm_giovanna}
Assume the elements of $\Set:=\Phi^+(w_+)\cap \Phi^+(w_-)$ are pairwise strongly orthogonal. Assume that $\Phi^+(w_+)\cap \Pi\subset \Set$ and $\Phi^+(w_-)\cap \Pi\subset \Set$. Then $\Set\subset \Pi$.
\end{lemma}
\begin{proof}
We prove it by induction on $\mathrm{height}(\gamma)$ for a $\gamma\in \Set$, which is potentially not a simple root. Assume first $\mathrm{height}(\gamma)=2$, that is $\gamma=\alpha_i+\alpha_j$. Clearly $\Phi^+(w)$ and its complement are stable under addition (by a result of Papi, this fact characterizes $\Phi^+(w)$), so not both $\alpha_i,\alpha_j\not\in \Phi^+(w_+)$. Take without loss of generality  $\alpha_i\in \Phi^+(w_+)$, then by assumption of the Lemma $\alpha_i\in\Set$. But this contradicts the assumed strong orthogonality of $\Set$. Hence $\Set$ cannot contain an element $\mathrm{height}(\gamma)=2$.

Now assume inductively that $B$ contains no root of height between $2$ and $h-1$, but a root $\mathrm{height}(\gamma)=h$. A standard fact of root systems is the existence of a decomposition $\gamma=\alpha_i+\beta$ with $\beta\in\Phi^+$. As above, not both $\alpha_i,\beta\not\in \Phi^+(w_+)$. If  $\alpha_i\in \Phi^+(w_+)$, then as above $\alpha_i\in\Set$ contradicts the assumption of strong orthogonality. So we now assume $\beta\in \Phi^+(w_+)$, then by inductive assumption $\beta\not\in\Set$, which implies by definition $\beta\not\in \Phi^+(w_-)$. Now we get a contradiction as above, now for $\Phi^+(w_-)$: Since $\gamma \in \Phi^+(w_-)$, we have again by additivity $\alpha_i\in \Phi^+(w_-)$, thus by assumption $\alpha_i\in\Set$, contradicting the strong orthogonality.  
\end{proof}~\\

\subsection{Verma modules}

For the Borel subalgebras constructed in Theorem \ref{thm_main}, 
$$B=U^-[x]^{op}_{\phi^-}\C[L]\psi U^+_{\phi_+},\qquad x:=\prod_{\alpha_k\in\supp} s_k,$$ 
we compute the induced (Verma-)modules representations as in \cite{LV19} Section 4. We first clarify the set of characters of the Borel subalgebra:

\begin{corollary}\label{cor_irreps2}
  The finite-dimensional irreducible (hence one-dimensional), representations of $B$ are in bijection with $n$-tuples in $\C^\times$:
  $$t_k,\;\forall\alpha_k\not\in\supp,
  \qquad e_k,f_k,\;\forall \alpha_k\in\supp, \quad \text{where }\; e_kf_k=\lambda^+_{\alpha_k}\lambda^-_{\alpha_k},$$
  with $K_{\lambda_k}$ acting\footnote{We write $K_{\lambda_k}$ because it is a convenient generating system of $L=\Z\supp^\perp$, but this is not important.} by $t_k$ and $\bar{E}_{\alpha_k}, \bar{E}_{\alpha_k}$ acting by $e_k,f_k$ and all $\bar{E}_\mu$ for $\mu\not\in\Pi$ acting zero.
\end{corollary}
\begin{proof}
We know from the  proof in section \ref{sec_Proof_b} that the action of $B$ on an irreducible module $V$ factors over the algebra 
$$\bar{C}=\C[L]\bigotimes_{\alpha_k\in\supp} \langle \bar{E}_{\alpha_k}, \bar{F}_{\alpha_k}\rangle,$$
where each $\langle \bar{E}_\alpha, \bar{F}_\alpha\rangle$ is a quantum Weyl algebra. For the Weyl algebra the irreducible representations are given in Example \ref{exm_sl2}. 
\end{proof}

We compute the underlying vector space:

\begin{Proposition}
 Inducing up produces an $U_q(\g)$-module, which as a vector space is
 \begin{align*} 
  V(B,\C_{\{t_k,e_k,f_k\}})
  &=U_q(\g)\otimes_B \C_{\{t_k,e_k,f_k\}}
  =U^-[xw_0]^{op}\C[\Z^\supp]v_{e,f,t}.
 \end{align*}
 The subalgebra $U^-[xw_0]^{op}\C[\supp]$ hereby acts by left-multiplication, while $K_{\lambda_k}, \bar{E}_{\alpha_k}, \bar{E}_{\alpha_k}$ act on the cyclic vector
 $v_{e,f,t}$ by $t_k,e_k,f_k$. In particular only the action of $\C[L],L=\supp^\perp$ is diagonalizable and $t_k$ acts as weights. 
\end{Proposition}

We are interested in the decomposition behaviour of these modules. We recall from \cite{LV19} Section 4.2 the respective property of the Weyl algebra
\begin{lemma}\label{lm_Verma_sl2}
The induced $U_q(\sl_2)$-module with respect to a Borel subalgebra $B_{\lambda,\lambda'}$
  $$V(B_{\lambda,\lambda'},\chi):=U_q(\g) \otimes_{B_{\lambda,\lambda'}} \C_\chi\cong \C[K,K^{-1}]1_\chi$$
has a nontrivial submodule $V'$ iff for some $n\in\N_0$ 
  $$\chi(\bar{E})=\pm q^{n}\cdot \lambda, \text{ equivalently } \chi(\bar{F})=\epsilon q^{-n}\cdot \lambda'.$$ 
It is cofinite of codimension $n+1$, the quotient $\varphi:V(B_{\lambda,\lambda'},\chi)\to L(n,\pm)$ is the irreducible module. The coefficient of the highest-weight vector in $L(n,\pm)$ is up to an arbitrary scalar
$$\varphi^0(K^i)=(\pm q_\alpha^{n})^{i}.$$
\end{lemma}

The respective statement for our Borel subalgebras follows readily:

\begin{corollary}
The $U_q(\g)$-representation $V(B,\C_{\{t_k,e_k,f_k\}})$ has a surjection to the finite-dimensional irreducible module $L(\lambda,\epsilon)$ for a unique choice of $\{t_k,e_k,f_k\}$, namely
\begin{align*}
t_k&=\epsilon(K_{\alpha_k}) q^{(\lambda,\lambda_k)}\hspace{1.35cm} \alpha_k\not\in\supp,\\
e_k&=\epsilon(K_{\alpha_k}) q^{(\lambda,\alpha_k)}\lambda^+_{\alpha_k}\qquad \alpha_k\in\supp.
\end{align*}
\end{corollary}
\begin{proof}
 Since $V(B,\C_{\{t_k,e_k,f_k\}})$ contains submodules for each Weyl algebra, for a finite-dimensional quotient we have necessarily $e_k=\epsilon_{\alpha} q_\alpha^{n_\alpha} \lambda^+_{\alpha}$ for some $\epsilon_\alpha=\pm1$ and $n_\alpha\in \N$.  Our goal is to construct a module homomorphism to (some) quotient of the Verma module of the asserted weight $\lambda$ and sign $\epsilon$.\\
 
 Using the explicit $\varphi^0_\alpha:\C[K_\alpha,K_\alpha^{-1}]\to \C$ from Lemma \ref{lm_Verma_sl2}, we define the 
 $$\varphi^0:\;\C[\Z^\supp]\longrightarrow \C$$
 $$\varphi^0\left(\prod_{\alpha\in\supp} K_\alpha^{i_\alpha}\right)
 :=\prod_{\alpha\in\supp} \varphi_\alpha^0(K_\alpha^{i_\alpha})
 =\prod_{\alpha\in\supp} \left(\epsilon_{\alpha} q_\alpha^{n_\alpha} \right)^{i_\alpha},$$
 which can be viewed as a linear map  
 $$\varphi^0:\;\C[\Z^\supp]v_{e,f,t}\longrightarrow \C v_\lambda.$$
 By construction, the left-hand side is a submodule of $V(B,\C_{\{t_k,e_k,f_k\}})$ with respect to the $U_q(\g)$-subalgebra $\bigotimes_{\alpha\in\supp} U_q(\sl_2)$. Because all $K_{\lambda_k}$ and $E_\mu$ have a commutation relation with them and they act by $t_k$ and $0$ on $v_{e,f,t}$, this is even a submodule with respect to the $U_q(\g)$-subalgebra  $$C':=U^-[x]^{op}U^0 \psi U^+\supset C.$$
 By construction, $\varphi_0$ is a module homomorphism with respect to $\C[\Z^\supp]\psi U^+[x]$, with $E_{\alpha}$ and $K_\alpha$ acting on $\varphi^0_\alpha$ (via precomposition) by $0$ and $\epsilon_\alpha q_\alpha^{n_\alpha}$, if we set 
 $$q_\alpha^{n_\alpha}:=q^{(\lambda,\alpha_k)},\qquad \epsilon_\alpha:=\epsilon(K\alpha),$$
 and it extends trivially to a module homomorphism with respect to $\C[\Z^\supp]\psi U^+$, and it extends to a module homomorphism with respect to $U^0\psi U^+[x]$ if we set
 $$t_k:=\epsilon(K_{\lambda_k}) q^{(\lambda,\lambda_k)}.$$
 Otherwise there cannot be a module morphism to $L(\lambda,\epsilon)$ that factors through the quotients of the Weyl algebra modules, because there must be a preimage of the highest weight vector. It is now clear that by the universal property of induced Verma modules, this map $\varphi$ extends to an $U_q(\g)$ module morphism to some quotient of the Verma module
 $$\tilde{\varphi}:\; U^-\C[\Z^\supp]v_{e,f,t}\longrightarrow \overline{U^-v_\lambda}.$$
 Since $L(\lambda,\epsilon)$ is the unique quotient of the standard module $V(\lambda,\epsilon)$, we can quotient further, and thus prove the assertion.  We remark that $\varphi$  can be upgraded to a $C'$-module morphism $\varphi$ to a product of $\sl_2$-modules $L(n_\alpha,\epsilon_\alpha)$, which is not necessary for the present proof, but it gives indications, which quotient $\overline{U^-v_\lambda}$ will arise.
 \end{proof}

\section{Triangular Borel subalgebras without full support in type $A_n$}\label{sec_degenerate}

Now we construct and study arbitrary triangular Borel subalgebras: 
$$\Phi^+(w_+)\cap\Phi^+(w_-)\supsetneq \supp(\phi_+)\cap \supp(\phi_-).$$
 These Borel subalgebras can contain (in view of Lemma \cite{LV19} Lemma 6.3) non-trivially character-shifted root vectors $\bar{E}_{\mu}$, $\bar{F}_{\mu}$ even though $\mu\notin \supp(\phi_+)\cap \supp(\phi_-)$, namely if $\mu$ is not simple and there is a smaller $\nu<\mu$ with $\nu\in \supp(\phi_+)\cap \supp(\phi_-)$. In more severe cases, the properly character-shifted root vectors $\bar{E}_{\nu},\bar{F}_{\nu}$ may generate another quantized Weyl algebra and the entire Borel subalgebra contains extensions of Weyl algebras.\\

In this section we  restrict to the case $A_n$ in order to be able to give concrete results, although we suspect that our ideas here work in general. For the classification results assume  two additional technical restrictions:
$$\supp(\phi_+)=\supp(\phi_-),$$ 
$$\Phi^+(w_-)\subset \Phi^+(w_+)\text{ (or vice-versa)}.$$
 
\begin{remark}
  We briefly remark why these condition might hold for general Borel subalgebras: For the first condition, let $\mu\in \supp(\phi_+)\backslash \supp(\phi_-)$, then there are two cases: If $\mu\in\Phi^+(w^-)$, then $\bar{F}_\mu$ has probably not undergone a full character-shift (i.e. does not contain terms in degree $0$) and thus $C$ was probably not basic. If $\mu\not\in\Phi^+(w^-)$, then $C\langle K_{\mu}\rangle$ is probably still triangular basic, but then by adding elements in $U^0$ the support is smaller i.e. a different set of generators can be chosen with less character-shift. The second condition could be achieved by applying Lusztig automorphisms, however it is a-priori not clear, that the image is still a coideal subalgebra.  
\end{remark}

\subsection{Special Weyl group elements and classificatory remarks}.

\begin{Def}\label{w} In $A_n$ we consider the following types of Weyl group elements:
\begin{itemize}
% \item For $1\leq i\leq j\leq n$ a \emph{ladder} $w(i,j)$ is the following Weyl group element together with a reduced expression:
% $$w(i,j):=s_{\alpha_i}s_{\alpha_{i+1}}\ldots s_{\alpha_j}$$
% For the associated roots in $\Phi^+(w(i,j))$ we use the notation
%  $$\beta_k(i,j):=s_{\alpha_i}s_{\alpha_{i+1}}\ldots s_{\alpha_{i+k-2}}(\alpha_{i+k-1})\quad 0< k\leq j-i+1$$
%  %=[i,i+k]
% %Wir nennen dieses Weylgruppenelement \emph{Leiter}
 \item For $1\leq i\leq n$ and $0\leq l\leq n-i$, $0\leq k\leq i-1$ a $\Vau_i^{lk}$ is the following Weyl group element together with a reduced expression:
 %Seien dazu:
 $$\Vau_i^{lk}:=s_{i}s_{{i+1}}s_{{i+2}}\ldots s_{{i+l}}s_{{i-1}}s_{{i-2}}\ldots s_{{i-k}}.$$
Of course $\Phi^+(\Vau_i^{lk}):=\{\sum_{j=0}^r\alpha_{i+j},\sum_{j=-s}^0\alpha_{i+j}\mid0\leq r\leq l, 0\leq s\leq k\}$.\\
 %Wir nennen dieses Weylgruppenelement $V$
 %\item For $1\leq i\leq n$ and $j\leq \mathrm{min}\{i-1,n-i\}$ a \emph{diamond} $\diamond_{i,j}$ is the following Weyl group element together with a reduced expression:
 %Außerdem definiren wir eine bestimmte Kombination von solchen $V_i$:
 %$$\diamond_{i,j}:=\Vau_i^{jj}\Vau_i^{j-1j-1}\ldots \Vau_i^{00}$$
 %.\\
 %Wir nennen dieses Weylgruppenelement Raute.
 \item For $1\leq i\leq n$ a \emph{palm} $\palme_i$ of height $h$ is the following Weyl group element together with a reduced expression:
 %Außerdem definiren wir mögliche Kombinationen von solchen $V_i$:
 $$\palme_i:=\Vau_i^{l_1k_1}\Vau_i^{l_2k_2}\cdots \Vau_i^{l_h k_h}.$$
 With $0\leq l_j\leq n-i$, $0\leq k_j\leq i-1$ and the property $l_j>l_{j+1}$ and $k_j>k_{j+1}$.\\
 %Wir nennen dieses Weylgruppenelement Palme.
\end{itemize}
\end{Def}

\begin{example}
We display the element $\Vau_3^{2,2}$ in $A_5$, which is a palm $\palme_3$ of height $1$: 
\begin{align*}
w&=\Vau_3^{2,2}=s_3s_4s_5s_2s_1, \\
 \Phi^+(w)&=\{\alpha_3,\alpha_{34},\alpha_{345},\alpha_{23},\alpha_{123}\}.
\end{align*}
\begin{center}
 \includegraphics[scale=.7]{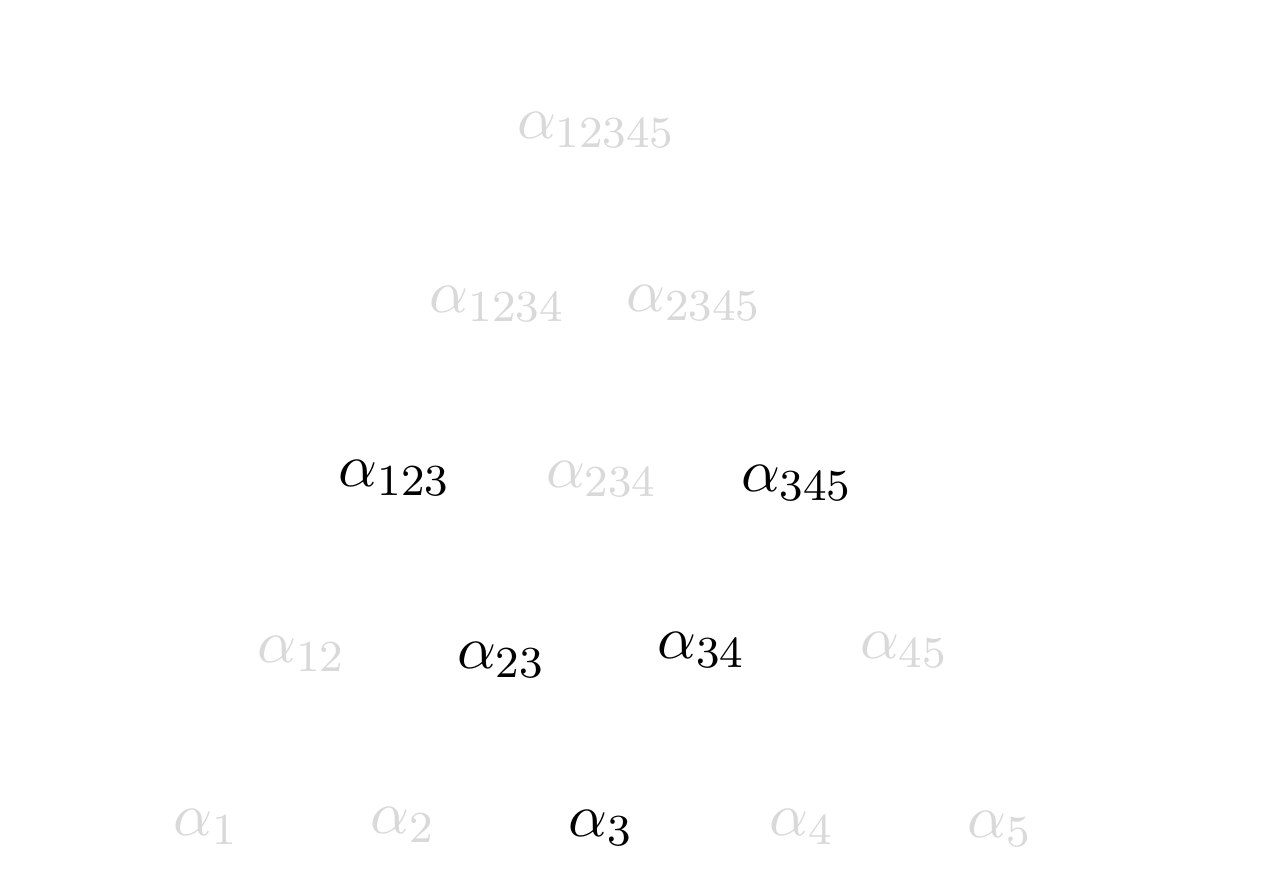}
\end{center}
 
\end{example}

The elements $\palme_i$ are inverses of the following special elements, that can be defined for any root system, and they appear below in general:

\begin{Lemma}\label{nebenklassendef}
 For a Weyl group element $w\in W$ and a root
$\alpha_i \in \Pi$ the following statements are equivalent:
\begin{itemize}
 \item the set $\Phi^+(w^{-1})$ contains exactly one simple root, which is
$\alpha_i$
\item for all $j \neq i$ holds $\ell(s_{j}w^{-1})=\ell(w)+1$,
but $\ell(s_{i}w^{-1})=\ell(w)-1$
\item In each reduced expression of $w$ the last factor is $s_{i}$ 
\item For $W_i=\langle s_i,i\neq j\rangle$ the parabolic subgroup, the element $w$ is the unique representative of the left coset $wW_i$ with minimal length.
\end{itemize}
\end{Lemma}

The next theorem spells out necessary conditions for $(w_-,w_+)$, such that the associated $C$ can be a Borel subalgebra. Morally the Weyl group element $w_-$ has to be a commuting products of palms $\palme_i$, with the elements in $\supp$ being the bases of every $\Vau_i$ in height $l$ in $\palme_i$.   

\begin{theorem}\label{wiewaussehenkann}
 If  $C=U^-[w_-]^{op}_{\phi_-}\k[L]\psi U^+[w_+]_{\phi_+}$ is a basic right coideal subalgebra, where $\Phi^+(w_-)\subset \Phi^+(w_+)$ and $\supp(\phi_+)=\supp(\phi_-)=:\supp(\phi)$, then only the following choices of $w_-$ are possible:

 \begin{enumerate}[a)]
 \item In the case $\supp(\phi)=\{\alpha_i\}\subset \Pi$, 
 $$w_-=\Vau_i^{lk}:=s_{i}s_{{i+1}}s_{{i+2}}\ldots s_{{i+l}}s_{{i-1}}s_{{i-2}}\ldots s_{{i-k}},$$ 
 for some $0\leq i,l,k\leq n$, as in Definition \ref{w}. 
 \item In the more general case $\supp(\phi)\cap\Pi=\{\alpha_i\}$, we have $w_-$ some palm of height $h$ 
 $$w_-=\palme_i:=\Vau_i^{l_1k_1}\Vau_i^{l_2k_2}\ldots \Vau_i^{l_hk_h},$$
 for $1\leq i\leq n$ with the property $l_j>l_{j+1}$ and $k_j>k_{j+1}$ and
 $$\supp(\phi)=\left\{\sum_{k=i-l}^{i+l}\alpha_k\mid 0\leq l\leq h-1\right\}.$$
 \item In the general case $\supp(\phi)\cap\Pi=\{\alpha_{i_1},\alpha_{i_2}\ldots\} =:J$ (pairwise orthogonal), it holds that
 $w_-$ has $\Phi^+(w)\cap\Pi=J$ and is thus a respective disjoint union of palms. 
\end{enumerate}
\end{theorem} 	

\begin{proof}
This directly follows from the fact that $\Phi^+(w_-),\Phi^+(w_+)$ cannot both contain $\alpha$ for a simple root outside of $\supp$. The support in b) follows from the fact that if $\Phi^+(w_-),\Phi^+(w_+)$ contains both $\beta$ not simple, then  there has to be a smaller $\nu<\beta$ with $\nu\in \supp(\phi_+)\cap \supp(\phi_-)$ 
\end{proof}

% \begin{corollary}\label{sternchentuerme}
% Sei $C=U^+[w]_{\phi_+}\k[L]U^-[w]_{\phi_-}$ eine triangulär erzeugte basic Rechtscoidealunteralgebra für die gilt $\supp(\phi_+)=\supp(\phi_-)=:\supp(\phi)$. 
% Dann gilt für den Träger $\supp(\phi)$: er enthält folgenden Typ Wurzeln: Sei $\supp(\phi)\cap\Pi=\{\alpha_{i_1},\alpha_{i_2},\alpha_{i_3}\ldots\}$ mit $i_1<i_2<i_3<\ldots$ in der Totalordnung, dann ist der ganze Träger von der Form:
% \begin{align*}
% &\supp(\phi_-)\subset\\
% &\{\alpha_{i_1},\alpha_{i_1}+\alpha_{i_1-1}+\alpha_{i_1+1},\alpha_{i_1}+\alpha_{i_1-1}+\alpha_{i_1+1}+\alpha_{i_1-2}+\alpha_{i_1+2},\ldots,\alpha_{i_1}+\ldots+\alpha_{i_1-n_1}+\alpha_{i_1+n_1},\\
% &\alpha_{i_2},\alpha_{i_2}+\alpha_{i_2-1}+\alpha_{i_2+1},\alpha_{i_2}+\alpha_{i_2-1}+\alpha_{i_2+1}+\alpha_{i_2-2}+\alpha_{i_2+2},\ldots,\alpha_{i_2}+\ldots+\alpha_{i_2-n_2}+\alpha_{i_2+n_2},\\
% &\ldots\mid i_j+n_j<i_{j+1}-n_{j+1}-1\}
% \end{align*}
% \end{corollary}
% \begin{proof}
% Diese Corollary folgt direkt aus \ref{EF} und \ref{wiewaussehenkann}.
% \end{proof}
%With the results from Kapitel (??artet aus Kapitel5 stattdessen part/paper)\ref{graduiertekapitel}
%for a given $w^-$ the possible choices $w^+$ for a Borel subalgebra $C$ 
%can be determined, that is by Vermutung \ref{borelvermutung} $w^+=w'^-w_0$

\begin{example}
An example for the previous theorem is the palm $\palme_3$ of height~2 in $A_5$: 
\begin{align*}
w_-&=\palme_3=s_{3}s_{4}s_{5}s_{2}s_{1}\;s_{3}s_{2}s_{4},\\
 \Phi^+(w_-)&=\{\alpha_3,\alpha_{34},\alpha_{345},\alpha_{23},\alpha_{123},
 \alpha_{234},\alpha_{1234},\alpha_{2345}\}.
\end{align*}
\begin{center}
 \includegraphics[scale=.7]{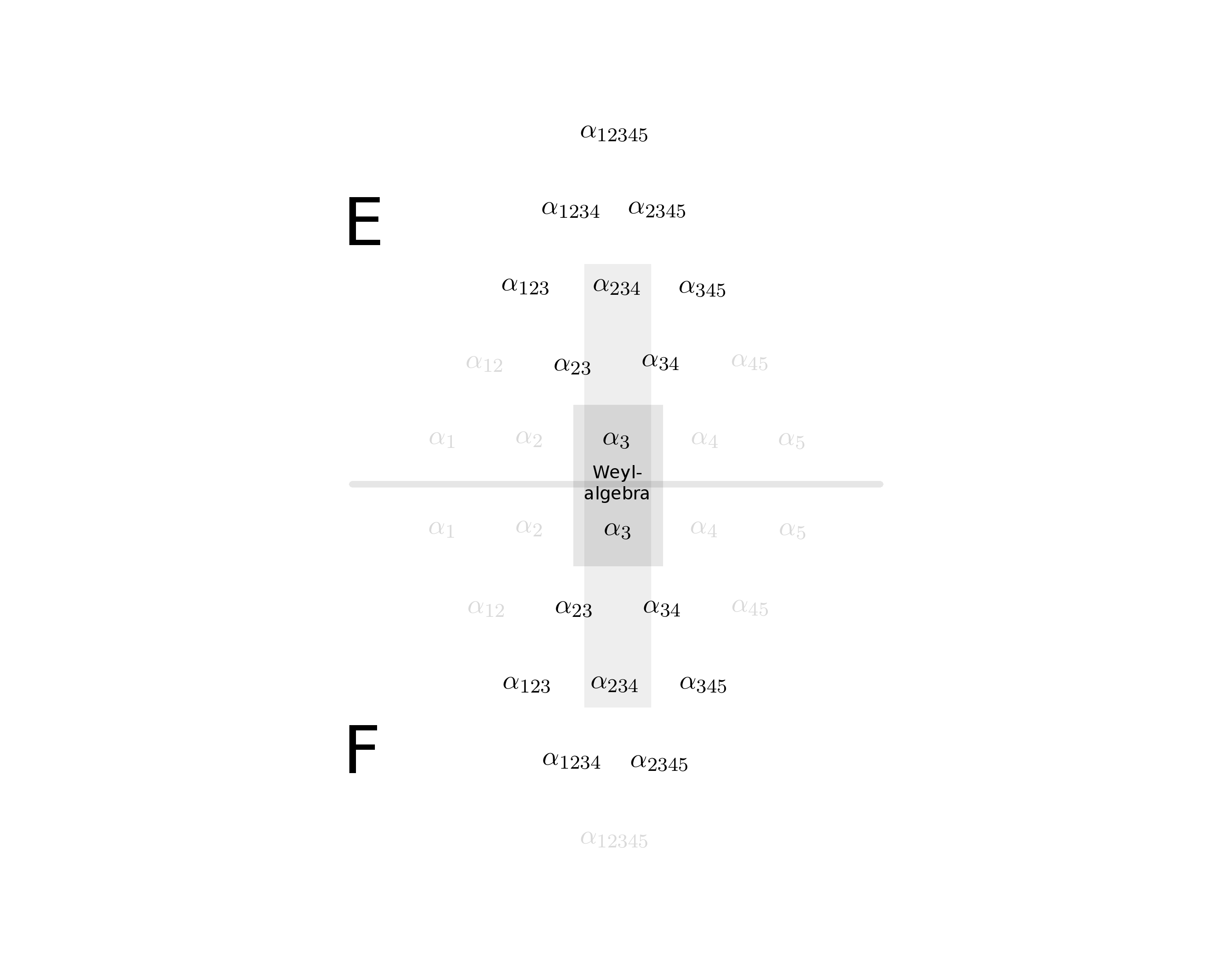}
\end{center}
Let $\phi_+$, $\phi_-$ be suitable characters with $\supp(\phi_+)=\supp(\phi_-)=\{\alpha_3,\alpha_{234}\}$. \newline
By Theorem \cite{LV19} Section 5 the graded algebra of the negative part is 
\begin{align*}
 \gr(U^-[w_-]^{op}_{\phi_-})&\cong U^-[w'_-]^{op}\C[\supp(\phi)]\\
 w'_-&=s_{4}s_{5}s_{2}s_{1}s_{2}s_{4},\\ 
\Phi^+(w')&=\{\alpha_4,\alpha_{45},\alpha_2,\alpha_{12},\alpha_1,\alpha_5\}.
\end{align*}
Overall we expect basic right coideal subalgebras
$$C=U^-[w_-]^{op}_{\phi_-}\C[L]\psi U^+[w_+]_{\phi_+},$$
for all $w^+$ with $\ell(w'^{-1}w^+)=\ell(w')+\ell(w^+)$, in particular a Borel subalgebra for
\begin{align*}
L&=\langle K_{\alpha_1}^{\pm1},(K_{\alpha_2}^{-1}K_{\alpha_4})^{\pm1},K_{\alpha_5}^{\pm1}\rangle,\\
w^+&=s_{3}s_{4}s_{5}s_{2}s_{1}\;s_{3}s_{2}s_{4}\;s_{3},\\
\Phi^+(w^+)&=\{\alpha_3,\alpha_{34},\alpha_{345},\alpha_{23},\alpha_{123},
 \alpha_{234},\alpha_{1234},\alpha_{2345},\alpha_{12345}\}.
\end{align*}
\end{example}

\enlargethispage{1cm}
\subsection{Construction of Borel subalgebras of height $1$}
In the following we want to construct basic right coideal subalgebras which correspond to palms of height $1$ in the previous subsection, 
$$\supp(\phi)=\{\alpha_i\}\in\Pi\qquad w=\palme_i=\Vau_i^{lk},$$ 
 for some $l,k\in\mathbb{N}_0$.
The property of $C$ being basic implies $\lambda_{\alpha_i}\lambda_{\alpha_i'}=\frac{q^2}{(q-q^{-1})(1-q^2)}$ for 
$\phi_+(\psi(E_{\alpha_i}))=\lambda_{\alpha_i}$ and $0$ otherwise and $\phi_-(F_{\alpha_i})=\lambda_{\alpha_i}'$ and $0$ otherwise.

\begin{Lemma}\label{konstruktion8palme1}
 For type $A_n$ the following is an basic right coideal subalgebra: 
 Let $1\leq i\leq n$ and $0\leq l\leq n-i$, $0\leq k\leq i$ be arbitrarily chosen, 
 then we consider an arbitrary palm of height 1 with $w_+=w_-=\Vau_i^{lk}$ which means
 $$\Phi^+(w_+)=\Phi^+(w_-):=\{\sum_{j=0}^r\alpha_{i+j},\sum_{j=-s}^0\alpha_{i+j}\mid 0\leq r\leq l, 0\leq s\leq k\},$$
 $$C:=U^-[\Vau_i^{lk}]^{op}_{\phi_-}\k[L]\psi U^+[\Vau_i^{lk}]_{\phi_+},$$
 
 for $L=\{\mu\mid \mu\bot\alpha_i\}$ and characters $\phi_+$ and $\phi_-$ with $\supp(\phi_+)=\supp(\phi_-)=\{\alpha_i\}$.\\

 More precisely $C$ has the following relations between two character-shifted root vectors for roots $\mu,\nu,\mu'\in\Phi^+(\Vau_i^{lk})$: 
\begin{align*}
 [\bar{E}_{\alpha_i},\bar{F}_{\alpha_i}]_{q^{(\alpha_i,\alpha_i)}}&=\frac{q^2}{q-q^{-1}}\\
 [\bar{E}_{\mu},\bar{E}_{\nu}]_{q^{(\mu,\nu)}}&=0\\ [\bar{F}_{\mu},\bar{F}_{\nu}]_{q^{(\mu,\nu)}}&=0\\
 [\bar{E}_{\mu},\bar{F}_{\nu}]_{q^{(\mu,\nu)}}&=0\quad \text{if }\mu\neq\nu\\
 [\bar{E}_{\mu},\bar{F}_{\mu}]_{q^{(\mu,\nu)}}&=q^2[\bar{E}_{\mu'},\bar{F}_{\mu'}]_1\quad \text{ if } \mu\neq\alpha_i, 
\end{align*}

\begin{align*}\text{ for } \mu'=\begin{cases}
 \mu-\alpha_{i+r}& \text{ if } \mu=\sum_{j=0}^r\alpha_{i+j}\\
 \mu-\alpha_{i-s}& \text{ if } \mu=\sum_{j=-s}^0\alpha_{i+j}.
 \end{cases}
 \end{align*}
 
 \end{Lemma}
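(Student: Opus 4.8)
The plan is to proceed in three stages: realize $C$ as a right coideal, establish the displayed commutation relations, and deduce the \emph{ede} property from them.

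\textbf{Right coideal structure.} In the chosen reduced expression $\Vau_i^{lk}=s_{\alpha_i}s_{\alpha_{i+1}}\cdots s_{\alpha_{i+l}}s_{\alpha_{i-1}}\cdots s_{\alpha_{i-k}}$ the first factor is $s_{\alpha_i}$, so by Remark \ref{charakterdurchstreichen} the character $\phi^+$ with $\supp(\phi^+)=\{\alpha_i\}$ is a valid character on $U^+[\Vau_i^{lk}]$, and likewise $\phi^-$ on $U^-[\Vau_i^{lk}]$; moreover $L=\{\mu\mid\mu\perp\alpha_i\}=\supp(\phi^\pm)^{\perp}$. Hence by Theorems \ref{HS09} and \ref{HK11b} each of the three ordered factors $\psi(U^+[\Vau_i^{lk}])_{\phi^+}$, $T_L$, $U^-[\Vau_i^{lk}]_{\phi^-}$ is a right coideal of $U$, so their product $C$ is a right coideal; it remains to check closure under multiplication, which follows from the displayed relations, as these reorder an arbitrary product into PBW-ordered form (positive)$\cdot T_L\cdot$(negative) with all correction terms again lying in $C$. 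Note that $\Phi^+(\Vau_i^{lk})$ consists precisely of the nested intervals $[i,i+r]$ for $0\le r\le l$ and $[i-s,i]$ for $0\le s\le k$, all having $\alpha_i$-coefficient $1$.

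\textbf{Commutation relations.} Since $\supp(\phi^+)=\{\alpha_i\}$, Lemma \ref{charakteralssumme} gives for each $\mu\in\Phi^+(\Vau_i^{lk})$ with $\mu\neq\alpha_i$
$$\bar E_\mu=\psi(E_\mu)+\lambda_{\alpha_i}c_\mu\,\psi(E_{\mu-\alpha_i})K_{\alpha_i}^{-1}$$
for a constant $c_\mu$ (the only element of $\supp(\phi^+)$ is $\alpha_i$, and $\mu-\alpha_i$ is again a sum of simple roots), and the analogous expansion for $\bar F_\mu$; for $\mu=\alpha_i$ these are the character-shifted elements $E_{\alpha_i}K_{\alpha_i}^{-1}+\lambda_{\alpha_i}K_{\alpha_i}^{-1}$ and $F_{\alpha_i}+\lambda'_{\alpha_i}K_{\alpha_i}^{-1}$. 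The relations $[\bar E_\mu,\bar E_\nu]=0$ and $[\bar F_\mu,\bar F_\nu]=0$ follow, for a suitable normalisation of the PBW root vectors, from the Levendorskii--Soibelman relations inside $U^+[\Vau_i^{lk}]$ resp. $U^-[\Vau_i^{lk}]$ together with the above expansion (the $K$-factors introduced by the shift only produce $q$-powers that the normalisation absorbs). The mixed relations are obtained by exactly the computation of Lemma \ref{kommutatorinC}, now carried out for the reduced word of $\Vau_i^{lk}$ (extended to a reduced word of $w_0$): one rewrites $E$--$F$ products with Lemma \ref{lemmaralpha2} in terms of $r_{\alpha_i}$, $r'_{\alpha_i}$ applied to the $U^+$-parts, and kills the superfluous terms with Lemma \ref{rist0neu}. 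For $\mu\neq\nu$ the two halves cancel precisely when $\lambda_{\alpha_i}\lambda'_{\alpha_i}=\frac{q^2}{(1-q^2)(q-q^{-1})}$, giving $[\bar E_\mu,\bar F_\nu]=0$; for $\mu=\nu=\alpha_i$ it is the quantized Weyl algebra computation of Example \ref{Weyl algebra} and Lemma \ref{nurweyl}, yielding the scalar $\frac{q^2}{q-q^{-1}}$; and for $\mu=\nu\neq\alpha_i$ the same calculation leaves exactly $q^2\big(\bar E_{\mu'}\bar F_{\mu'}-\bar F_{\mu'}\bar E_{\mu'}\big)$, where $\mu'$ is the interval $\mu$ with its outermost simple root ($\alpha_{i+r}$ resp. $\alpha_{i-s}$) removed, and $\mu'\in\Phi^+(\Vau_i^{lk})$, so the right-hand side lies in $C$. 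Iterating this last relation down to $\alpha_i$ shows each $[\bar E_\mu,\bar F_\mu]$ is in fact a nonzero scalar, so $\langle\bar E_\mu,\bar F_\mu\rangle$ is a quantized Weyl algebra for every $\mu\in\Phi^+(\Vau_i^{lk})$; in particular all reorderings close up and $C$ is a right coideal subalgebra.

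\textbf{The ede property.} By the previous step $C$ is the ordered product $T_L\cdot\prod_{\mu\in\Phi^+(\Vau_i^{lk})}\langle\bar E_\mu,\bar F_\mu\rangle$ of mutually commuting subalgebras, i.e. a quotient of $T_L\otimes\bigotimes_{\mu}\langle\bar E_\mu,\bar F_\mu\rangle$. Given a finite-dimensional irreducible $C$-module $V$, restrict to each tensor factor: $T_L$ is abelian and each $\langle\bar E_\mu,\bar F_\mu\rangle$ is a quantized Weyl algebra, hence ede by Example \ref{Weyl algebra}. Arguing as at the end of the proof of Lemma \ref{Cede}, $V$ decomposes compatibly over these commuting ede subalgebras, which forces $\dim V=1$; so $C$ is ede. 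The technical heart of the argument is the mixed-commutator computation in the second step: transferring the $r_{\alpha_i},r'_{\alpha_i}$-calculus of Lemma \ref{kommutatorinC} from $w_0$ to $\Vau_i^{lk}$ and checking that, for $\mu=\nu$, it is exactly the extension term $q^2[\bar E_{\mu'},\bar F_{\mu'}]_1$ that survives while everything else cancels (and nothing survives for $\mu\neq\nu$); a secondary nuisance is pinning down the scaling of the PBW root vectors so that the $E$-side and $F$-side brackets come out as genuine commutators rather than $q$-commutators.
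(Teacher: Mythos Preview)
Your argument for the ede property contains a genuine error. You claim that ``iterating this last relation down to $\alpha_i$ shows each $[\bar E_\mu,\bar F_\mu]$ is in fact a nonzero scalar, so $\langle\bar E_\mu,\bar F_\mu\rangle$ is a quantized Weyl algebra for every $\mu$''. This is false: the relation reads
\[
[\bar E_\mu,\bar F_\mu]_{q^2}=q^2[\bar E_{\mu'},\bar F_{\mu'}]_1,
\]
and the right-hand side is the \emph{ordinary} commutator, not the $q^2$-commutator. These differ by $(q^2-1)\bar F_{\mu'}\bar E_{\mu'}$, which is not central. Already for $\mu'=\alpha_i$ one gets (compare the explicit $\sl_3$ computation in Section~\ref{sec_example})
\[
[\bar E_\mu,\bar F_\mu]_{q^2}=q^2[\bar E_{\alpha_i},\bar F_{\alpha_i}]_1
=(q^4-q^2)\bar F_{\alpha_i}\bar E_{\alpha_i}+\tfrac{q^4}{q-q^{-1}},
\]
so $\langle\bar E_\mu,\bar F_\mu\rangle$ is \emph{not} a quantized Weyl algebra, and your description of $C$ as a product of mutually commuting Weyl algebras collapses.

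The paper's argument is different and uses precisely this non-scalar feature. On any finite-dimensional representation $V$ of the genuine Weyl algebra $\langle\bar E_{\alpha_i},\bar F_{\alpha_i}\rangle$ one knows from Example~\ref{Weyl algebra} that the ordinary commutator $[\bar E_{\alpha_i},\bar F_{\alpha_i}]_1$ acts by \emph{zero} (not by a nonzero scalar). Hence $[\bar E_\mu,\bar F_\mu]_{q^2}$ acts by zero on $V$ for each $\mu$ with $\mu'=\alpha_i$; since such $\bar E_\mu$ then $q$-commutes with all generators on $V$ and there is a $K\in T_L$ with $(K,\mu)$ not a root of unity, Theorem~\ref{maxials0allgemein} forces $\bar E_\mu$ (and likewise $\bar F_\mu$) to act trivially on any irreducible $V$. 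This in turn makes $[\bar E_\mu,\bar F_\mu]_1$ act by zero, and the argument climbs one step further out along the palm. Inductively all $\bar E_\mu,\bar F_\mu$ with $\mu\neq\alpha_i$ act trivially, and the irreducible $V$ factors through the commutative quotient $\mathbb{C}[e,f]/(ef-\tfrac{q^2}{(q-q^{-1})(1-q^2)})$, hence is one-dimensional. The point you are missing is that the induction takes place on the level of the representation, not inside the algebra.
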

 As a consequence, we remark that similarly, right coideal subalgebras associated to disjoint unions of palms are basic. 
 \begin{proof}

 The commutator relations between $F$'s and $E$'s follow from Lemma \ref{lm_commutator}, the ones between $E$'s follow from explicit calculations in \cite{Vocke16} Chapter 3.\\

Now we prove the basic property: Let $V$ be an arbitrary finite dimensional representation of $C$. We consider the restriction to the right coideal subalgebra $\langle\bar{E}_{\alpha_i},\bar{F}_{\alpha_i}\rangle$, which is a quantized Weyl algebra as usual by the choice of characters. We know from Example \ref{exm_sl2},
that on any finite dimensional representation of the Weyl algebra the commutator $[\bar{E}_{\alpha_i},\bar{F}_{\alpha_i}]_1$ vanishes. 
In particular, each finite-dimensional representation of the Weyl algebra factorizes to a representation of the commutative algebra 
$\mathbb{C}[e,f]/ (ef-\frac{q^2}{(q-q^{-1})(1-q^2)})$.\\

We now consider the next-largest subalgebra, which is generated by the Weyl algebra and all $\bar{E}_{\mu},\bar{F}_{\mu}$ with $\mu'=\alpha_i$.
Due to the commutator relation $ [\bar{E}_{\mu},\bar{F}_{\mu}]=[\bar{E}_{\mu'},\bar{F}_{\mu'}]_1$ we get that $[\bar{E}_{\mu},\bar{F}_{\mu}]_{q^{(\mu,\mu)}}$ acts trivially on $V$.
Due to Lemma \ref{lm_critBasic} for $q^{(\mu,\mu)}\neq 1$ this implies, that the elements $\bar{E}_{\mu},\bar{F}_{\mu}$, which $q$-commute with all elements on $V$, act trivial.\\

Consider now inductively the next larger subalgebra with $\bar{E}_{\mu},\bar{F}_{\mu}$ for $(\mu')'=\alpha_i$. From the relation $ [\bar{E}_{\mu},\bar{F}_{\mu}]=[\bar{E}_{\mu'},\bar{F}_{\mu'}]_1$ and the just proven trivial action of 
$\bar{E}_{\mu'},\bar{F}_{\mu'}$ it follows that the $q$-commutator acts trivial on $V$. Inductively we know that all $\bar{E}_{\mu},\bar{F}_{\mu}$
with $\mu\neq \alpha_i$ act trivial on $V$. \\

Thus, the category of finite-dimensional representations of $C$ is equivalent to the category of finite-dimensional representation of the commutative algebra $\mathbb{C}[e,f]/(ef-\frac{q^2}{(q-q^{-1})(1-q^2)})$ and, in particular, all irreducible finite-dimensional representations of $ C $ are one-dimensional. 

\end{proof}

\section{Case study: All triangular Borel subalgebras of $U_q(\sl_4)$}\label{sec_example} 

\subsection{Procedure} 

We now give all possible triangular Borel subalgebras $B$ of $U_q(\sl_4)$. More precisely, we prove that these are basic triangular right-coideal subalgebras, and that they are  maximal among all basic triangular right-coideal subalgebras. In consequence, each entry in the list is either a triangular Borel subalgebras or there are larger non-triangular Borel subalgebra (we would always  conjecture the former), but there cannot be other triangular Borel subalgebras.  All elements in the list are given up to reflection and up to the symmetry $\alpha_1\leftrightarrow \alpha_3$.\\

%\subsection{Tools}

We now discuss how the results in this section are obtained. For the height-$0$ Borel subalgebras it follows from our Main Theorem \ref{thm_main} that the algebras below are indeed Borel subalgebras, and from the Classification Theorem \ref{triangerweitert} (since Conjecture \ref{weyltheorem} is proven in type $A_n$) we obtain that these are all triangular Borel subalgebras of this type. An arbitrary triangular coideal subalgebra is of the form
$$C=U^-[w_-]^{op}_{\phi_-}\C[L]\psi U^+[w_+]_{\phi_+}.$$
From \cite{LV19} Lemma 6.3 (which is proven in type $A_n$) we know the necessary condition 
\begin{align}\label{wFormel}
\ell(w_-'^{-1})+\ell(w_+)=\ell(w_-'^{-1}w_+),\qquad
\Phi^+(w_+)\cup\Phi^+(w_-')=\Phi^+.
\end{align} 
Hence, we go through all possible elements $w_+$ and determine all respectively possible elements $w_-$. For $w_+=w_0$, we get from (\ref{wFormel}) $w_-$ can be either $w_-={s_1}$ or $w_-={s_2}$, or $w_-={s_1s_3}$. For $w_+$ with $|\Phi^+(w_+)|=5$ there are the following options: $w_+=s_1s_2s_3s_2s_1$ with $w_-=s_1s_2$ or  $w_+=s_1s_2s_3s_1s_2$ with $w_-=s_2s_3$. For $w_+$ with $|\Phi^+(w_+)|=4$ there are the following options:  $w_+=s_1s_3s_2s_3$ with $w_-=s_1s_2s_3$ or $w_-=s_1s_3s_2s_3$ and  $w_+=s_2s_1s_3s_2$ with $w_-=s_2s_1s_3$ or $w_-=s_2s_1s_3s_2$. From (\ref{wFormel}) we can easily see, that no other options for $w_\pm$ are possible. \\
  
%In practice, we check for each $\supp$ for cases with $\Phi^+(w)$ as large as possible (to fix the reflection) and successively larger $\Phi^+(w_+)\cap\Phi^+(w_-)$ compared to %$\supp$.\\ 

For all cases we have to check the following by hand\footnote{A-priori, if such a case would turn out not to be basic, then smaller cases would have to be considered. But this never happens below and we expect this to be true in general.}:
\begin{itemize}
 \item The coideal $C$ in question is indeed a subalgebra, this is done by writing explicit commutator relations.
 \item The coideal subalgebra $C$ in question is basic, this is done using the knowledge of the commutators and the criterion Lemma \ref{lm_critBasic} below, sometimes iteratively.
 \item There are no larger triangular basic coideal subalgebras from \cite{LV19} Lemma 6.3. We do not check that there are no larger non-triangular basic coideal subalgebras.
\end{itemize}
For the right coideal subalgebras of height 1, construction and basicness follow from Theorem \ref{konstruktion8palme1}. A by-hand criterion is:
\begin{lemma}\label{lm_critBasic}
 Let $A$ be an algebra with some generators, acting on a finite-dimensional irreducible representation $V$. Suppose we have an element $X$ such that $[X,Y]_{c_Y}=0$ for every generator $Y$ and any $c_Y$. Suppose also that either of the following applies:
 \begin{enumerate}[a)]
  \item $X$ acts nilpotently on $V$.
  \item There is $K$ with a nonzero eigenvalue on $V$ and  
  $[X,K]_{c_K}=0$, $c_K$ not a root of unity. 
 \end{enumerate}
 Then $X.V=0$ for any finite dimensional irreducible representation $V$ of $A$.
\end{lemma}
\begin{proof}
By assumption all generators preserve the space of $X$-eigenvectors to eigenvalue $0$, so it is sufficient to prove the existence of such an eigenvector. Under assumption a) this is clear. Under assumption b), let $v$ be an eigenvector of $K$ to a nonzero eigenvalue, then all $X^nv$ are $K$-eigenvectors for eigenvalue $c_K^n\lambda$, so by finite dimension some $X^nv=0$ and we have again found a zero eigenvector of $X$.
\end{proof}
\begin{corollary}\label{col_critBasic}
 The two scenarios in which we apply below the previous Lemma are:
 \begin{enumerate}[a)]
  \item A quantum Weyl algebra commutator $X=[\bar{E},\bar{F}]$, since Example \ref{exm_sl2} shows $X$ must act nilpotently on any finite-dimensional representation.
  \item A root vector $X_\mu$ with $\mu\not\in \supp$, since then there exists a $K_\lambda\in L$ with $q^{(\mu,\lambda)}\neq 1$.
 \end{enumerate}
\end{corollary}

\enlargethispage{5cm}
\subsection{The Borel subalgebra of standard type}
\subsubsection{Borel subalgebra  $\{\}$ and $\{\alpha_1,\alpha_2,\alpha_3,\alpha_{12},\alpha_{23},\alpha_{123}\}$}
~\\
$$U^{\geq 0}=U^-[1]^{op}U^0\psi U^+[w_0].$$ 
\begin{center}
 \includegraphics[width=0.3\linewidth]{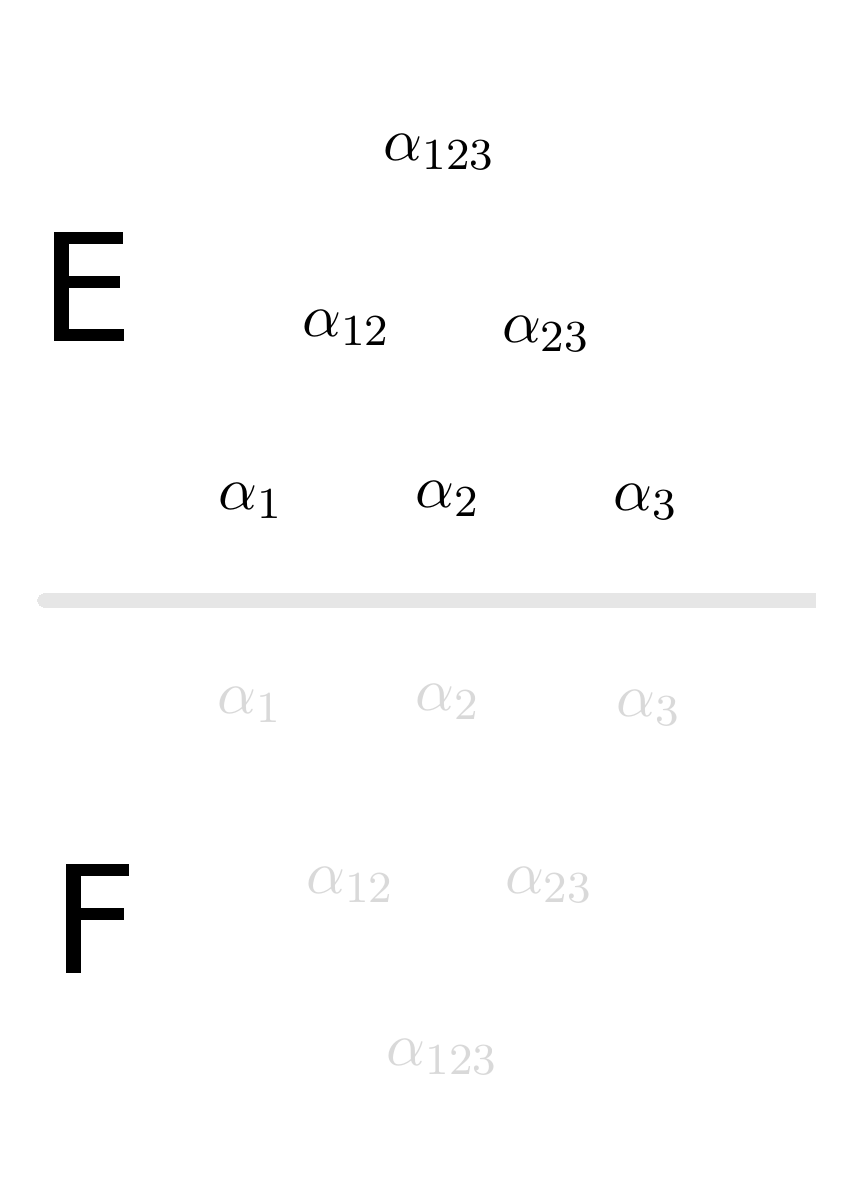}
\end{center}
\vspace{-,5cm}
This algebra and its reflections are the standard Borel subalgebras. It is basic by Lemma~\ref{lm_stdBorel}.\\

\newpage
\subsection{Three Borel subalgebras with full support}~\\

\subsubsection{Borel subalgebra  $\{\alpha_1\}$ and $\{\alpha_1,\alpha_2,\alpha_3,\alpha_{12},\alpha_{23},\alpha_{123}\}$}
~\\
$$U^-[s_1]^{op}_{\phi_-}\langle K_{\alpha_1+2\alpha_2}^{\pm1},K_{\alpha_3}^{\pm 1}\rangle \psi U^+[w_0]_{\phi_+},$$
$$\supp=\{\alpha_1\},\qquad \phi_+(E_{\alpha_1}K_{\alpha_1}^{-1})\phi_-(F_{\alpha_1}) =\frac{q^2}{(1-q^2)(q-q^{-1})}.$$
\begin{center}
 \includegraphics[width=0.3\linewidth]{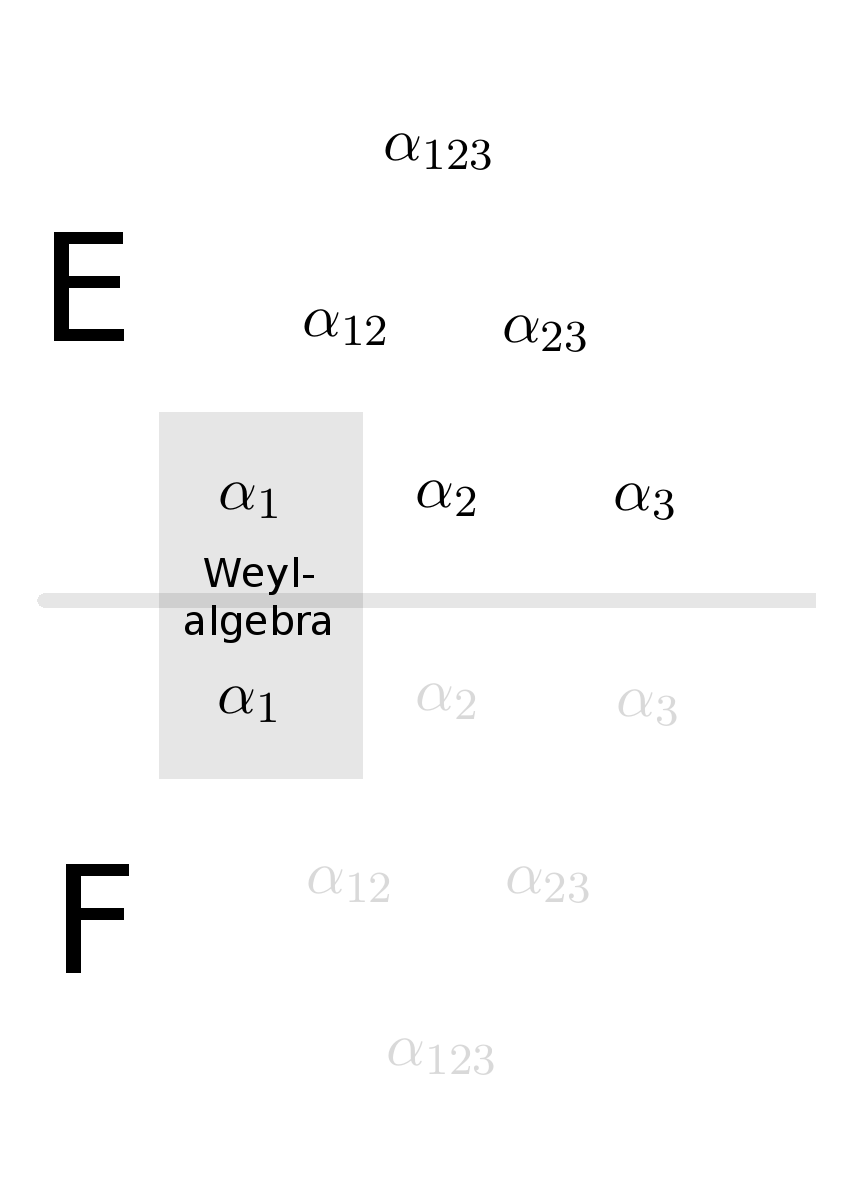}
\end{center}
This is a Borel subalgebra with full support, its commutator relations are given by Lemma \ref{lm_commutator} and it is basic by Theorem \ref{thm_main} b). \\

\subsubsection{Borel subalgebra  $\{\alpha_2\}$ and $\{\alpha_1,\alpha_2,\alpha_3,\alpha_{12},\alpha_{23},\alpha_{123}\}$}
~\\
$$U^-[s_2]^{op}_{\phi_-}\langle K_{2\alpha_1+\alpha_2}^{\pm1},K_{\alpha_2+2\alpha_3}^{\pm 1}\rangle \psi U^+[w_0]_{\phi_+},$$
 $$\supp=\{\alpha_2\},\qquad \phi_+(E_{\alpha_2}K_{\alpha_2}^{-1})\phi_-(F_{\alpha_2}) =\frac{q^2}{(1-q^2)(q-q^{-1})}.$$
\begin{center}
 \includegraphics[width=0.3\linewidth]{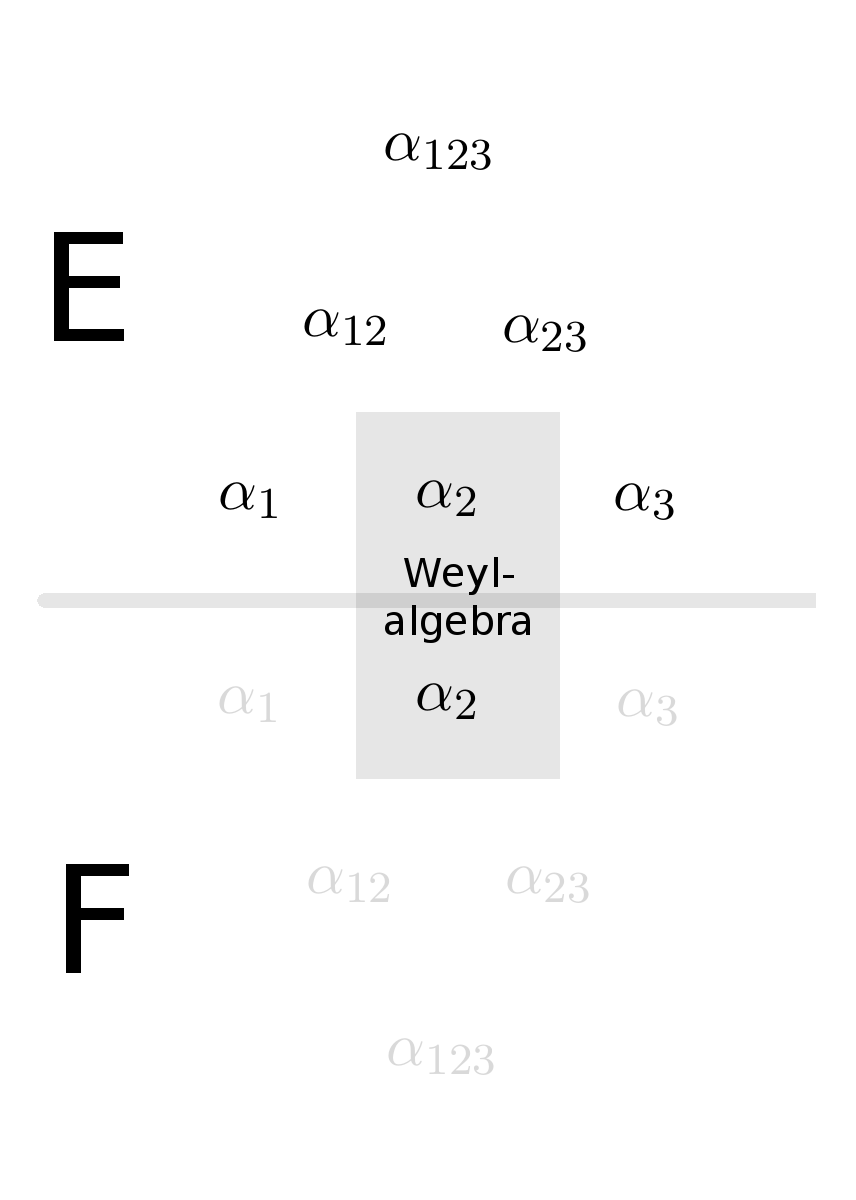}
\end{center}
This is a Borel subalgebra with full support, its commutator relations are given by Lemma \ref{lm_commutator} and it is basic by Theorem \ref{thm_main} b).\\

\enlargethispage{1.5cm}
\subsubsection{Borel subalgebra  $\{\alpha_1,\alpha_3\}$ and $\{\alpha_1,\alpha_2,\alpha_3,\alpha_{12},\alpha_{23},\alpha_{123}\}$}
~\\
$$U^-[s_1s_3]^{op}_{\phi_-}\langle K_{\alpha_1+2\alpha_2+\alpha_3}^{\pm1}\rangle \psi U^+[w_0]_{\phi_+},$$
 $$\supp=\{\alpha_1,\alpha_3\},\qquad   
 \phi_+(E_{\alpha_1}K_{\alpha_1}^{-1})\phi_-(F_{\alpha_1})=\phi_+(E_{\alpha_3}K_{\alpha_3}^{-1})\phi_-(F_{\alpha_3}) =\frac{q^2}{(1-q^2)(q-q^{-1})}.$$
\begin{center}
 \includegraphics[width=0.3\linewidth]{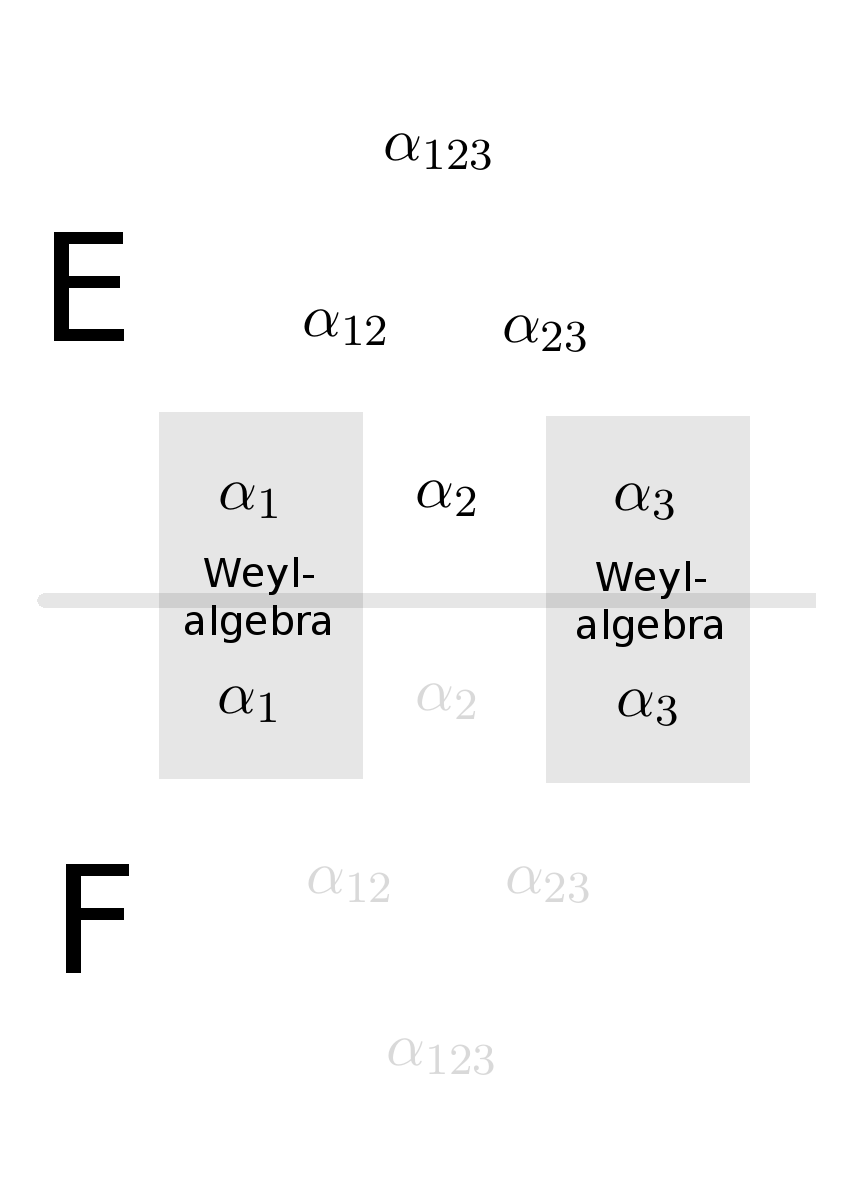}
\end{center}
This is a Borel subalgebra with full support, its commutator relations are given by Lemma \ref{lm_commutator} and it is basic by Theorem \ref{thm_main} b).\\

\subsection{Five Borel subalgebras of height 1}~\\

\subsubsection{Borel subalgebra $\{\alpha_1,\alpha_{12}\}$ and $\{\alpha_1,\alpha_3,\alpha_{12},\alpha_{23},\alpha_{123}\}$}
~\\
$$B=  U^-[s_1s_2]^{op}_{\phi_-}\langle K_{\alpha_1+2\alpha_2}^{\pm1}, K_{\alpha_3}^{\pm1},\rangle
\psi U^+[s_1s_2s_3s_2s_1]_{\phi_+},$$
$$\supp=\{\alpha_1\},\qquad \phi_+(E_{\alpha_1}K_{\alpha_1}^{-1})\phi_-(F_{\alpha_1}) =\frac{q^2}{(1-q^2)(q-q^{-1})}.$$

\begin{center}
 \includegraphics[width=0.3\linewidth]{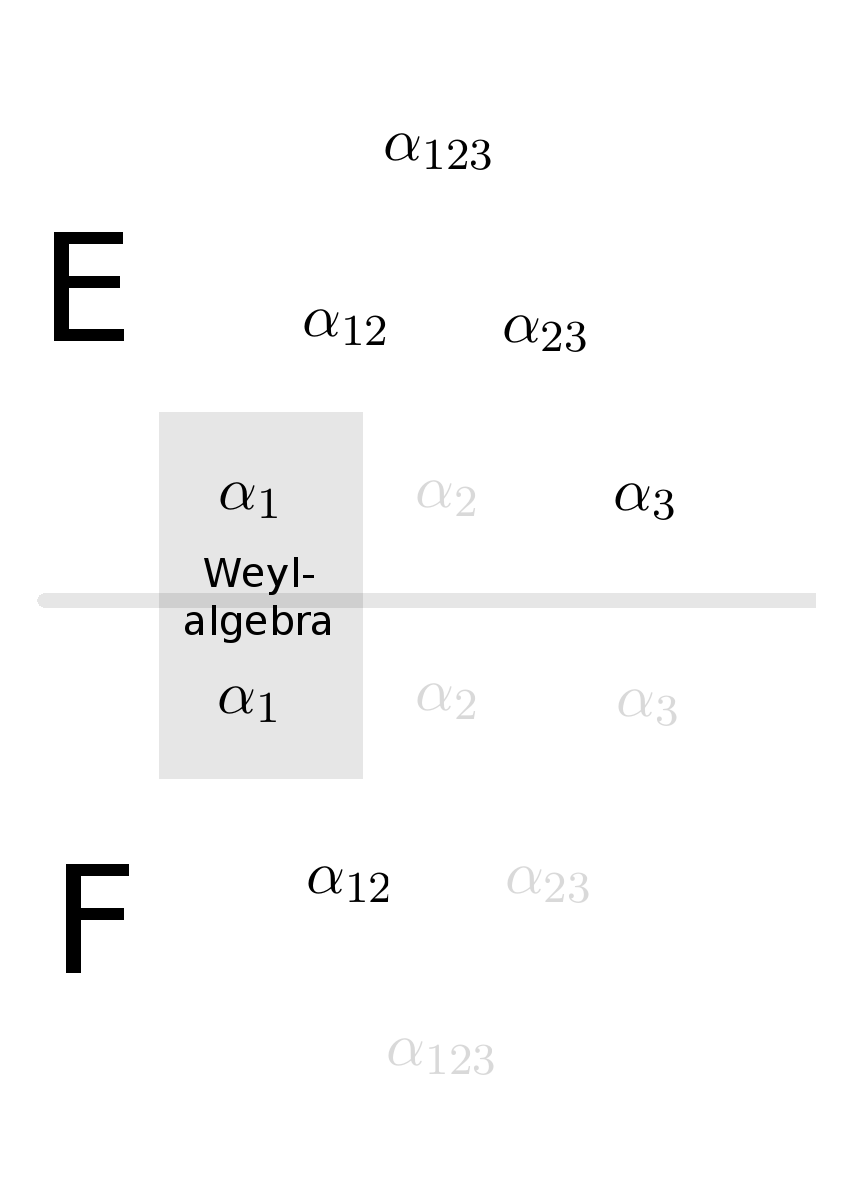}
\end{center}

Here $w_-=s_1s_2=\vee_1^{0,1}$ and the commutator relations and basicness follows from Lemma \ref{konstruktion8palme1}. Given the commutator relations, we could also directly take the quantum Weyl algebra and apply Corollary \ref{lm_critBasic} a), then the other generators commute. 

\begin{center}
\begin{tabular}{|c||c|c|}
\hline
&$\bar{F}_{{\alpha_1}}$&$\bar{F}_{{\alpha_1}{\alpha_2}}$\\
 \hline\hline
 $\bar{E}_{{\alpha_1}}$&$\frac{q^2}{q-q^{-1}}$&$0$\\
 \hline
 $\bar{E}_{{\alpha_1}{\alpha_2}}$&$0$&$q^2[\bar{E}_{{\alpha_1}},\bar{F}_{{\alpha_1}}]_1$\\
\hline
 $\bar{E}_{{\alpha_1}{\alpha_2}{\alpha_3}}$&$0$&$0$\\
 \hline
 $\bar{E}_{{\alpha_3}}$&$0$&$0$\\
 \hline
 $\bar{E}_{{\alpha_3}{\alpha_2}}$&$0$&$0$\\
 \hline
 \end{tabular}
\end{center}~\\

% The quantized Weyl algebra is here $\langle \bar{E}_{\alpha_1},\bar{F}_{\alpha_1}\rangle$, and the commutator vanishes on each finite dimensional representation.
% The elements $Z$ which thus q-commute on $V$ with all generators of $C$ are $\bar{F}_{{\alpha_1}{\alpha_2}}$ resp. $\bar{E}_{{\alpha_1}{\alpha_2}}$,
% resp. $\bar{E}_{{\alpha_3}{\alpha_2}}$, whereas the other generators $\bar{E}_{{\alpha_3}}$ and $\bar{E}_{{\alpha_1}{\alpha_2}{\alpha_3}}$ 
%  q-commute anyhow with all the other generators. \\

%Wir können wie in Kapitel \ref{kapitelsl3} Typ 3 zeigen, dass $\bar{F}_{{\alpha_1}{\alpha_2}}$ auf $V$ trivial wirkt, 
%also kann man wie in Lemma \ref{Cede} zeigen, dass diese Rechtscoidealunteralgebra basic ist.\\ 

%Die Maximalität folgt leicht,
%wenn man sich mögliche Erweiterungen von $w^+$ bzw. $w^-$ ansieht. Siehe dazu Kapitel \ref{kapitelsl3} 
%Typ 3 oder die triangulären Borelunteralgebren von Typ 2.\\

\subsubsection{Borel subalgebra $\{\alpha_2,\alpha_{23}\}$ and $\{\alpha_1,\alpha_2,\alpha_{12},\alpha_{23},\alpha_{123}\}$}
~\\
$$B=  U^-[s_2s_3]^{op}_{\phi_-}\langle K_{2\alpha_1+\alpha_2}^{\pm1}, K_{\alpha_2+2\alpha_3}^{\pm1}\rangle
\psi U^+[s_1s_2s_3s_1s_2]_{\phi_+},$$
$$\supp=\{\alpha_2\},\qquad \phi_+(E_{\alpha_2}K_{\alpha_2}^{-1})\phi_-(F_{\alpha_2}) =\frac{q^2}{(1-q^2)(q-q^{-1})}.$$

\begin{center}
\includegraphics[width=0.3\linewidth]{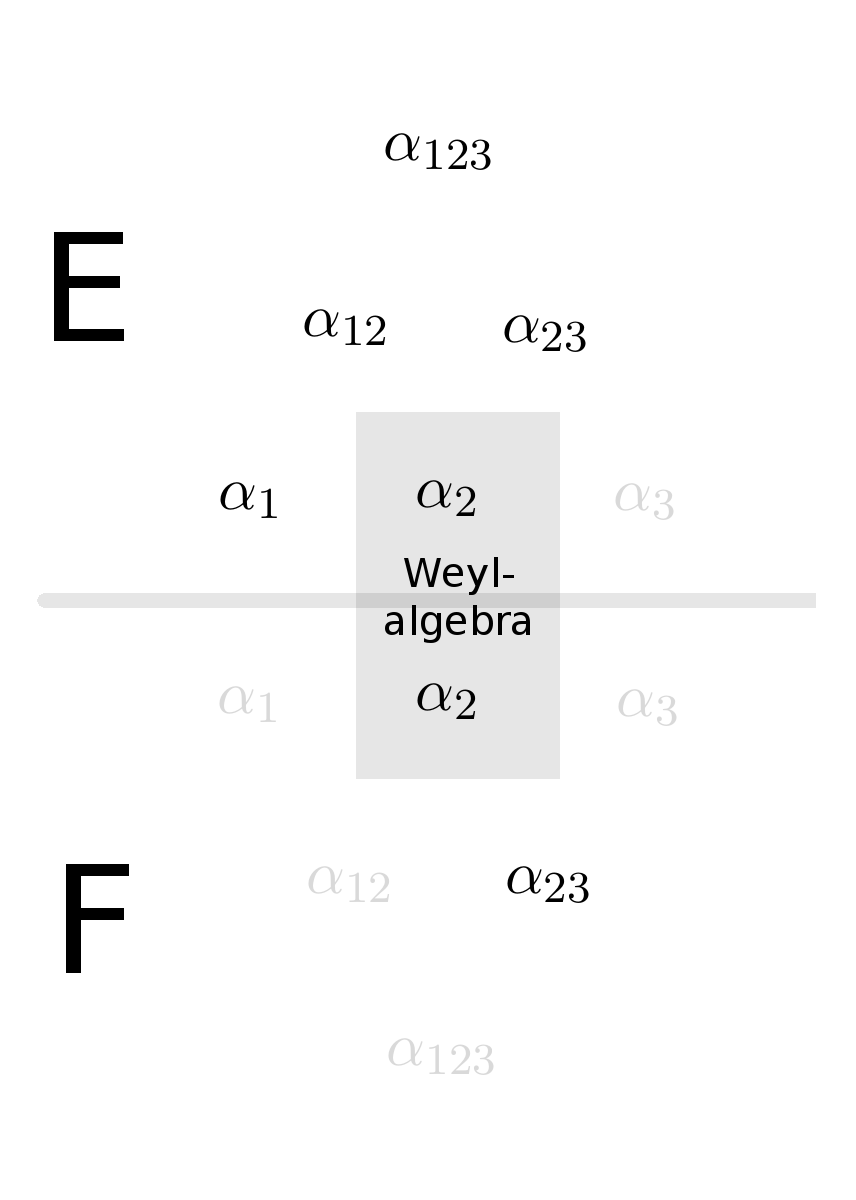}
\end{center}

Here $w_-=s_2s_3=\vee_2^{0,1}$ and the commutator relations and basicness follows from Lemma \ref{konstruktion8palme1}. Given the commutator relations, we could also directly take the quantum Weyl algebra and apply Corollary \ref{lm_critBasic} a), then the other generators commute. 
\begin{center}
\begin{tabular}{|c||c|c|}
\hline
&$\bar{F}_{{\alpha_2}}$&$\bar{F}_{{\alpha_2}{\alpha_3}}$\\
\hline\hline
$\bar{E}_{{\alpha_2}}$&$\frac{q^2}{q-q^{-1}}$&$0$\\
\hline
$\bar{E}_{{\alpha_2}{\alpha_1}}$&$0$&$0$\\
\hline
$\bar{E}_{{\alpha_2}{\alpha_3}}$&$0$&$q^2[\bar{E}_{{\alpha_2}},\bar{F}_{{\alpha_2}}]_1$\\
\hline
$\bar{E}_{{\alpha_2}{\alpha_1}{\alpha_3}{\alpha_2}}$&$0$&$-q^{-1}[[\bar{E}_{\alpha_2},\bar{F}_{\alpha_2}]_1,\bar{E}_{\alpha_1}]_1$\\
\hline
$\bar{E}_{{\alpha_1}}$&$0$&$0$\\
\hline
\end{tabular}
\end{center}~\\

\newpage
\subsubsection{Borel subalgebra  $\{\alpha_1,\alpha_{12},\alpha_{123}\}$ and $\{\alpha_1,\alpha_3,\alpha_{123},\alpha_{12}\}$}
~\\
$$B=  U^-[s_1s_2s_3]^{op}_{\phi_-}
\langle K_{\alpha_1+2\alpha_2}^{\pm1},K_{\alpha_3}^{\pm 1}\rangle
\psi U^+[s_1s_3s_2s_3]_{\phi_+},$$
$$\supp=\{\alpha_1\},\qquad \phi_+(E_{\alpha_1}K_{\alpha_1}^{-1})\phi_-(F_{\alpha_1}) =\frac{q^2}{(1-q^2)(q-q^{-1})}.$$

\begin{center}
 \includegraphics[width=0.3\linewidth]{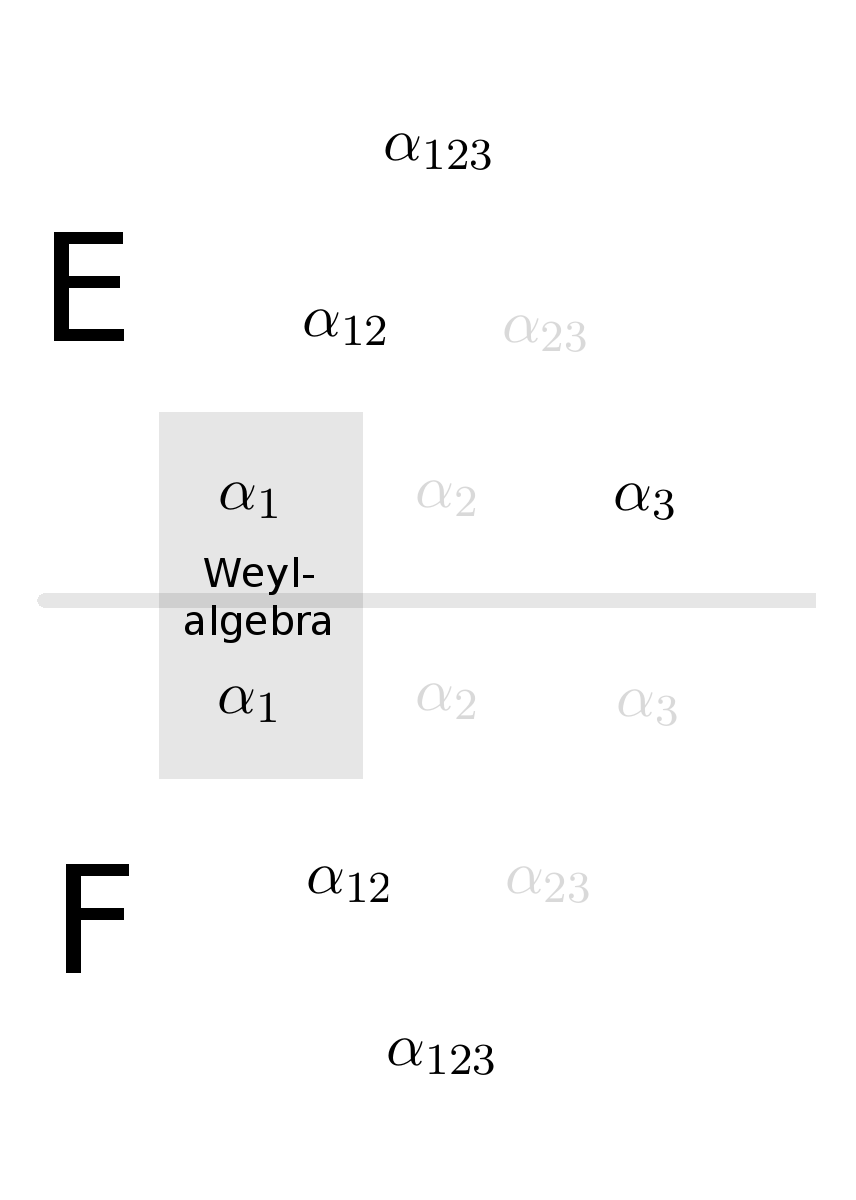}
\end{center}

Here $w_-=s_1s_2s_3=\vee_1^{0,2}$ and the commutator relations and basicness follows from Lemma \ref{konstruktion8palme1}. Given the commutator relations, we could also directly take the quantum Weyl algebra and apply Corollary \ref{lm_critBasic} a), then $\bar{F}_{{\alpha_1}{\alpha_2}}$ commutes with all other generators and we apply Corollary \ref{lm_critBasic} a), then the other generators commute. 

\begin{center}
\begin{tabular}{|c||c|c|c|}
\hline
 &$\bar{F}_{{\alpha_1}}$&$\bar{F}_{{\alpha_1}{\alpha_2}}$&$\bar{F}_{{\alpha_1}{\alpha_2}{\alpha_3}}$\\
 \hline\hline
 $\bar{E}_{{\alpha_1}}$&$\frac{q^2}{q-q^{-1}}$&$0$&$0$\\
 \hline
 $\bar{E}_{{\alpha_1}{\alpha_2}}$&$0$&$q^2[\bar{E}_{{\alpha_1}},\bar{F}_{{\alpha_1}}]_1$&$0$\\
\hline
 $\bar{E}_{{\alpha_1}{\alpha_2}{\alpha_3}}$&$0$&$0$&$q^2[\bar{E}_{{\alpha_1}{\alpha_2}},\bar{F}_{{\alpha_1}{\alpha_2}}]_1$\\
 \hline
 $\bar{E}_{{\alpha_3}}$&$0$&$0$&$q^2\bar{F}_{{\alpha_1}{\alpha_2}}$\\
 \hline
 \end{tabular}
\end{center}~\\

% The quantized Weyl algebra is here $\langle \bar{E}_{\alpha_1},\bar{F}_{\alpha_1}\rangle$, and the commutator vanishes on each finite dimensional representation.
% The elements $Z$ which thus q-commute on $V$ with all generators of $C$ are $\bar{F}_{{\alpha_1}{\alpha_2}}$ resp. $\bar{E}_{{\alpha_1}{\alpha_2}}$.
% If these elements vanish, the remaining elements q-commutate in the next step of the induction with all other generators.\\

%bzw. $\bar{E}_{{\alpha_3}{\alpha_2}}$, wohingegen die restlichen Erzeuger $\bar{E}_{{\alpha_3}}$ und $\bar{E}_{{\alpha_1}{\alpha_2}{\alpha_3}}$ 
%ohnehin mit allen anderen Erzeugern q-kommutieren. \\

%Wir könnnen wieder wie in Kapitel \ref{kapitelsl3} argumentieren, dass $\bar{F}_{{\alpha_1}{\alpha_2}}$ einen $0$-Eigenvektor hat
%und dann auf ganz $V$ trivial wirkt. Dann q-kommutiert in der $V$-Wirkung $\bar{F}_{{\alpha_1}{\alpha_2}{\alpha_3}}$ schon mit allen Elementen von $B$ also wirkt $\bar{F}_{{\alpha_1}{\alpha_2}{\alpha_3}}$ 
%trivial auf $V$. Wie in \ref{maxials0allgemein} folgt, dass die Rechtscoidealunteralgebra basic ist.\\
%Die Maximalität sieht man wie oben.\\
%\begin{align}
 %[\bar{F_{123}},\bar{E_{3}}]=\bar{F_{12}},\quad [\bar{F_{123}},\bar{E_{123}}]=-(q^3-q)\bar{F_{1}}\bar{E_{1}}-\frac{q^3}{q-q^{-1}}1\quad [\bar{F_{123}},\bar{E_{23}}]=0
%\end{align}
%\newpage
\subsubsection{Borel subalgebra  $\{\alpha_2,\alpha_{12},\alpha_{23}\}$ and $\{\alpha_2,\alpha_{12},\alpha_{23},\alpha_{123}\}$}
~\\
$$B=U^-[s_2s_1s_3]^{op}_{\phi_-}
\langle K_{2\alpha_1+\alpha_2}^{\pm1},K_{\alpha_2+2\alpha_3}^{\pm1}\rangle
\psi U^+[s_2s_1s_3s_2]_{\phi_+},$$
$$\supp=\{\alpha_2\},\qquad \phi_+(E_{\alpha_2}K_{\alpha_2}^{-1})\phi_-(F_{\alpha_2}) =\frac{q^2}{(1-q^2)(q-q^{-1})}.$$

Here $w_-=s_2s_1s_3=\vee_2^{1,1}$ and the commutator relations and basicness follows from Lemma \ref{konstruktion8palme1}. Given the commutator relations, we could also directly take the quantum Weyl algebra and apply Corollary \ref{lm_critBasic} a), then the other generators commute. 

\begin{center}
 \includegraphics[width=0.3\linewidth]{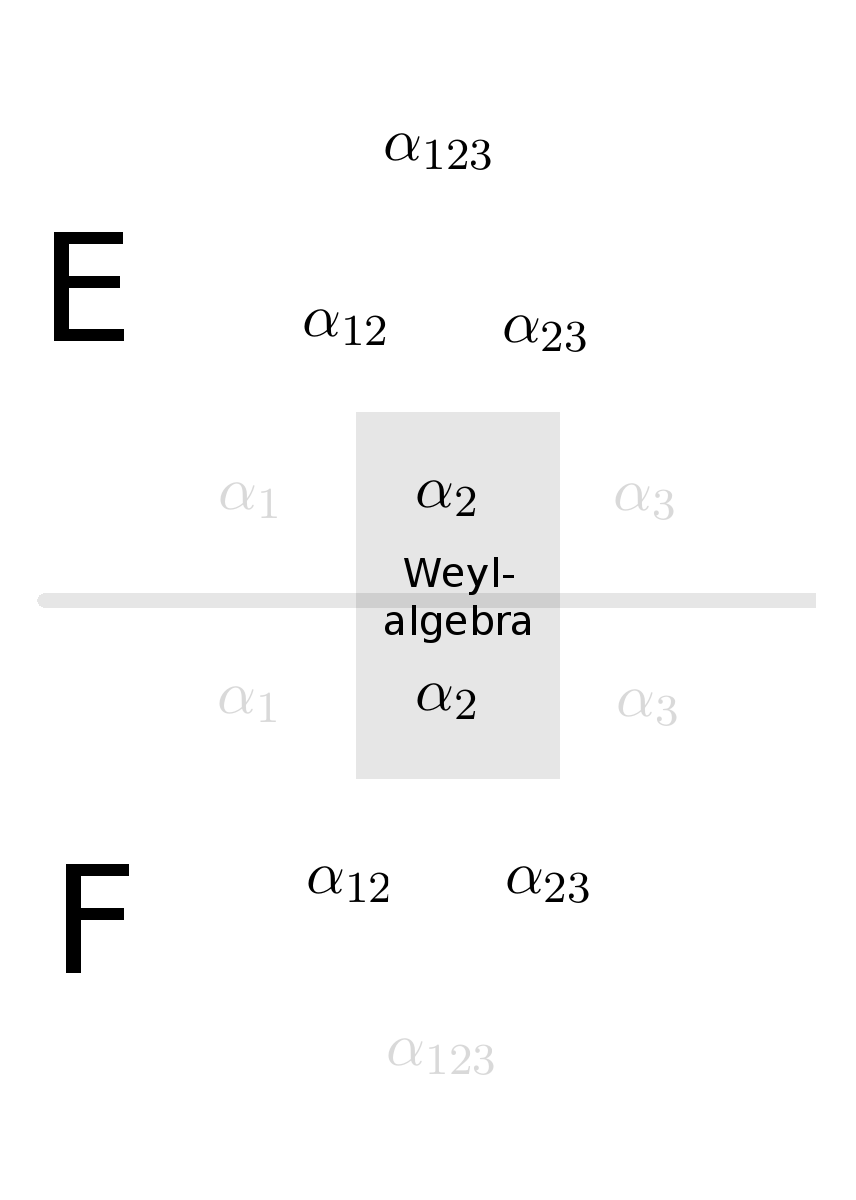}
\end{center}

It has the following commutator relations:
\begin{center}
\begin{tabular}{|c||c|c|c|}
\hline
 &$\bar{F}_{{\alpha_2}}$&$\bar{F}_{{\alpha_2}{\alpha_1}}$&$\bar{F}_{{\alpha_2}{\alpha_3}}$\\
 \hline\hline
 $\bar{E}_{{\alpha_2}}$&$\frac{q^2}{q-q^{-1}}$&$0$&$0$\\
 \hline
 $\bar{E}_{{\alpha_2}{\alpha_1}}$&$0$&$q^2[\bar{E}_{{\alpha_2}},\bar{F}_{{\alpha_2}}]_1$&$0$\\
\hline
 $\bar{E}_{{\alpha_2}{\alpha_3}}$&$0$&$0$&$q^2[\bar{E}_{{\alpha_2}},\bar{F}_{{\alpha_2}}]_1$\\
 \hline
 $\bar{E}_{{\alpha_2}{\alpha_1}{\alpha_3}{\alpha_2}}$&$0$&$-q^{-1}[[\bar{E}_{\alpha_2},\bar{F}_{\alpha_2}]_1,\bar{E}_{\alpha_3}]_1$&$-q^{-1}[[\bar{E}_{\alpha_2},\bar{F}_{\alpha_2}]_1,\bar{E}_{\alpha_1}]_1$\\
 \hline
 \end{tabular}
\end{center}~\\

% 
% The quantized Weyl algebra is here $\langle \bar{E}_{\alpha_2},\bar{F}_{\alpha_2}\rangle$. 
% All other elements vanish again by induction on all irreducible finite dimensional representations $V$.
% 

%und der Kommutator verschwindet auf jeder endlichdimensionalen Darstellung.
%Die deswegen auf $V$ mit allen Erzeugern aus $C$ q-kommutierenden Elemente $Z$ sind dann $\bar{F}_{{\alpha_1}{\alpha_2}}$ bzw. $\bar{E}_{{\alpha_1}{\alpha_2}}$.
%Wenn diese Elemente verschwinden q-kommutieren in einem weiteren Induktionsschritt die restlichen Elemente.

%Es gilt wie oben, dass $\bar{E}_{{\alpha_2}{\alpha_1}}$ und $\bar{E}_{{\alpha_2}{\alpha_3}}$ auf ganz $V$ trivial wirken. 
%Dann q-kommutiert in der $V$- Wirkung aber auch schon $\bar{E}_{{\alpha_2}{\alpha_1}{\alpha_2}{\alpha_3}}$ mit allen Elementen aus $B$, also kann man zeigen,
%dass auch $\bar{F}_{{\alpha_2}{\alpha_1}}$ und $\bar{F}_{{\alpha_2}{\alpha_3}}$ trivial wirken. Damit zeigen wir wieder wie oben,
%dass die Rechtscoidealunteralgebra basic ist.
%die Maximalität folgt wieder durch Betrachten möglicher Erweiterungen.
%\newpage
\subsubsection{Borel subalgebra  $\{\alpha_1,\alpha_{3},\alpha_{123},\alpha_{12}\}$ and $\{\alpha_1,\alpha_{3},\alpha_{123},\alpha_{12}\}$, two Weyl algebras}
~\\
$$U^-[s_1s_3s_2s_3]^{op}_{\phi_-}\langle K_{\alpha_1+2\alpha_2+\alpha_3}^{\pm1}\rangle \psi U^+[s_1s_3s_2s_3]_{\phi_+},$$
 $$\supp=\{\alpha_1,\alpha_3\},\qquad   
 \phi_+(E_{\alpha_1}K_{\alpha_1}^{-1})\phi_-(F_{\alpha_1})=\phi_+(E_{\alpha_3}K_{\alpha_3}^{-1})\phi_-(F_{\alpha_3}) =\frac{q^2}{(1-q^2)(q-q^{-1})}.$$
 
\begin{center}
 \includegraphics[width=0.3\linewidth]{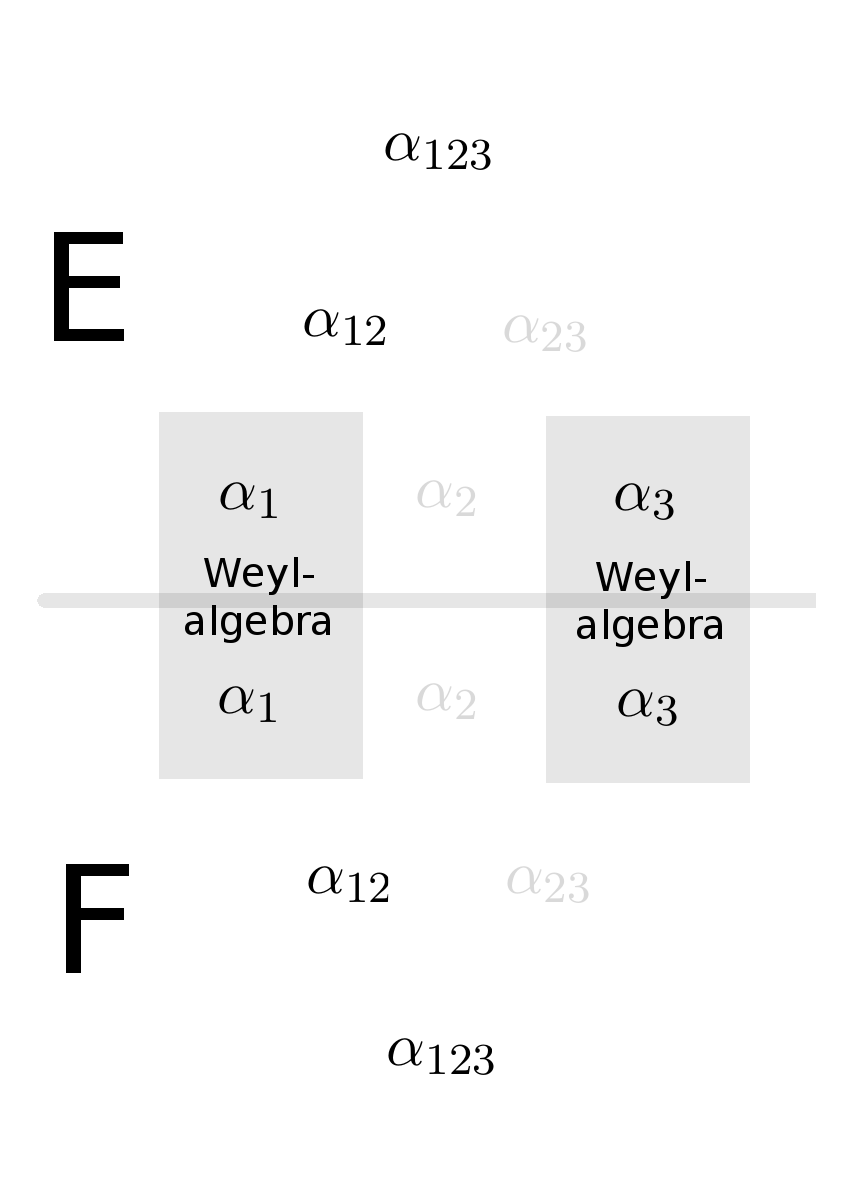}
\end{center}

\newpage
This example is still of height one, but it contains two quantum Weyl algebras (and two respective palms). In the following we give explicit commutator relations. To prove basicness, we take both quantum Weyl algebras and apply Corollary \ref{lm_critBasic} a), then $\bar{F}_{{\alpha_1}{\alpha_2}}$ commutes with all other generators and we apply Corollary \ref{lm_critBasic} a), then the other generators commute.  

\begin{center}
\begin{tabular}{|c||c|c|c|c|}
\hline
 &$\bar{F}_{{\alpha_1}}$&$\bar{F}_{{\alpha_1}{\alpha_2}}$&$\bar{F}_{{\alpha_1}{\alpha_2}{\alpha_3}}$&$\bar{F}_{{\alpha_3}}$\\
 \hline\hline
 $\bar{E}_{{\alpha_1}}$&$\frac{q^2}{q-q^{-1}}$&$0$&$0$&$0$\\
 \hline
 $\bar{E}_{{\alpha_1}{\alpha_2}}$&$0$&$q^2[\bar{E}_{{\alpha_1}},\bar{F}_{{\alpha_1}}]_{\alpha_1}$&$0$&$0$\\
\hline
 $\bar{E}_{{\alpha_1}{\alpha_2}{\alpha_3}}$&$0$&$0$&$q^2[\bar{E}_{{\alpha_1}{\alpha_2}},\bar{F}_{{\alpha_1}{\alpha_2}}]_1$&$\bar{E}_{{\alpha_1}{\alpha_2}}$\\
 \hline
 $\bar{E}_{{\alpha_3}}$&$0$&$0$&$q^2\bar{F}_{{\alpha_1}{\alpha_2}}$&$\frac{q^2}{q-q^{-1}}$\\
 \hline
 \end{tabular}
\end{center}~\\

%Here there are two Weyl algebras $\langle \bar{E}_{\alpha_1},\bar{F}_{\alpha_1}\rangle$ and $\langle \bar{E}_{\alpha_3},\bar{F}_{\alpha_3}\rangle$, 
%commuting with each other. The remaining elements vanish again by induction on any irreducible finite dimensional representation.\\

% Die deswegen auf $V$ mit allen Erzeugern aus $C$ q-kommutierenden Elemente $Z$ sind dann $\bar{F}_{{\alpha_1}{\alpha_2}}$ bzw. $\bar{E}_{{\alpha_1}{\alpha_2}}$.
% Wenn diese Elemente verschwinden q-kommutieren in einem weiteren Induktionsschritt die restlichen Elemente.

% Wie oben können wir zeigen, dass $\bar{F}_{{\alpha_1}{\alpha_2}}$ und $\bar{E}_{{\alpha_1}{\alpha_2}}$sowie $\bar{F}_{{\alpha_1}{\alpha_2}{\alpha_3}}$ und $\bar{E}_{{\alpha_1}{\alpha_2}{\alpha_3}}$ trivial wirken auf $V$.
% Dann ist die Rechtscoidealunteralgebra in der $V$-Wirkung isomorph zu dem direkten Produkt von zwei Weylalgebren 
% $\langle \bar{E}_{\alpha_1},\bar{F}_{\alpha_1}\rangle\otimes
% \langle \bar{E}_{\alpha_3},\bar{F}_{\alpha_3}\rangle$, d.h. $B$ ist nach example
% \ref{Weyl algebra} ede.
%die Maximalität folgt dann wie in den obigen Erörterungen.

\subsection{The Borel subalgebra of height 2}

\subsubsection{Borel subalgebra  $\{\alpha_2,\alpha_{12},\alpha_{23},\alpha_{123}\}$ and $\{\alpha_2,\alpha_{12},\alpha_{23},\alpha_{123}\}$}
~\\
$$U^-[s_2s_1s_3s_2]^{op}_{\phi_-}\langle K_{2\alpha_1+\alpha_2}^{\pm1},K_{\alpha_2+2\alpha_3}^{\pm 1}\rangle \psi U^+[s_2s_1s_3s_2]_{\phi_+},$$
 $$\supp=\{\alpha_2,\alpha_{123}\},\qquad \phi_+(E_{\alpha_2}K_{\alpha_2}^{-1})\phi_-(F_{\alpha_2}) =\frac{q^2}{(1-q^2)(q-q^{-1})}.$$

\begin{center}
 \includegraphics[width=0.3\linewidth]{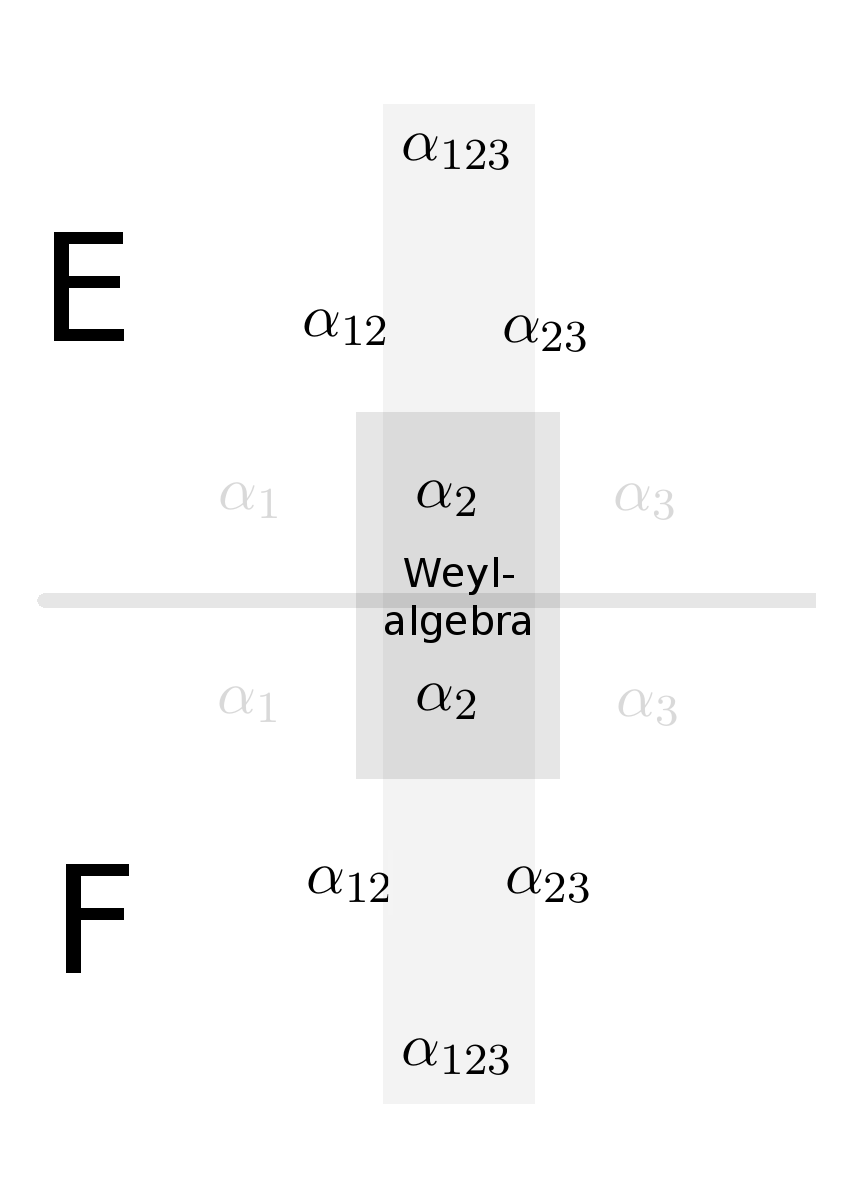}
\end{center}

This is the only example in $A_3$ where in the intersection of $\Phi^+(w_+)\cap\Phi^+(w_-)$ we have roots $\beta>\alpha$ both in $\supp$. In consequence this algebra contains an extension of a quantum Weyl algebra by another quantum Weyl algebra. The commutator relations are given below explicitly.

To prove basicness, we take the quantum Weyl algebra in $\alpha_2$ and apply Corollary \ref{lm_critBasic} a), so $[\bar{E}_{{\alpha_2}},\bar{F}_{{\alpha_2}}]_1$ acts zero, and most elements commute. Now in this quotient, the subalgebra $\langle\bar{E}_{{\alpha_1}{\alpha_3}{\alpha_2}},\bar{F}_{{\alpha_1}{\alpha_3}{\alpha_2}}\rangle$ becomes a second quantum Weyl algebra and we again apply Corollary \ref{lm_critBasic} a).

\begin{center}
\begin{tabular}{|c||c|c|c|c|}
\hline
 &$\bar{F}_{{\alpha_2}}$&$\bar{F}_{{\alpha_2}{\alpha_1}}$&$\bar{F}_{{\alpha_2}{\alpha_3}}$&$\bar{F}_{{\alpha_1}{\alpha_3}{\alpha_2}}$\\
 \hline\hline
 $\bar{E}_{{\alpha_2}}$&$\frac{q^2}{q-q^{-1}}$&$0$&$0$&$0$\\
 \hline
 $\bar{E}_{{\alpha_2}{\alpha_1}}$&$0$&$q^2[\bar{E}_{{\alpha_2}},\bar{F}_{{\alpha_2}}]_1$&$0$&$[[\bar{E}_{\alpha_2},\bar{F}_{\alpha_2}]_1,\bar{F}_{\alpha_3}]_1$\\
\hline
 $\bar{E}_{{\alpha_2}{\alpha_3}}$&$0$&$0$&$q^2[\bar{E}_{{\alpha_2}},\bar{F}_{{\alpha_2}}]_1$&$[[\bar{E}_{\alpha_2},\bar{F}_{\alpha_2}]_1,\bar{F}_{\alpha_1}]_1$\\
 \hline
 $\bar{E}_{{\alpha_1}{\alpha_3}{\alpha_2}}$&$0$&$-q^{-1}[[\bar{E}_{\alpha_2},\bar{F}_{\alpha_2}]_1,\bar{E}_{\alpha_3}]_1$&$-q^{-1}
 [[\bar{E}_{\alpha_2},\bar{F}_{\alpha_2}]_1,\bar{E}_{\alpha_1}]_1$&$-[\bar{E}_{{\alpha_2}{\alpha_1}},\bar{F}_{{\alpha_2}{\alpha_1}}]_1$\\
 &&&&$-[\bar{E}_{{\alpha_2}{\alpha_3}},\bar{F}_{{\alpha_2}{\alpha_3}}]_1$\\
 &&&&$+c(1-K_{{\alpha_1}+{\alpha_2}+{\alpha_3}}^{-2})$\\
 \hline
 \end{tabular}
\end{center} ~\\
\enlargethispage{2cm}

% 
% und $\langle \bar{E}_{\alpha_3},\bar{F}_{\alpha_3}\rangle$, 
% die miteinander kommutieren. Die restlichen Elemnte verschwinden wieder induktiv auf jeder irreduziblen Darstellung.

% Wir können wir in 1.2.2 zeigen dass $\bar{E}_{{\alpha_2}{\alpha_1}}$ und $\bar{E}_{{\alpha_2}{\alpha_3}}$ sowie $\bar{F}_{{\alpha_2}{\alpha_1}}$ und 
% $\bar{F}_{{\alpha_2}{\alpha_3}}$ auf $V$ trivial wirken.
% In $V$ bleibt also $\langle \bar{E}_{\alpha_2},\bar{F}_{\alpha_2}\rangle\times
% \langle \bar{E}_{{\alpha_2}{\alpha_1}{\alpha_3}{\alpha_2}},\bar{F}_{{\alpha_2}{\alpha_1}{\alpha_3}{\alpha_2}}\rangle$ . 
% Dabei ist für $\alpha_1+\alpha_2+\alpha_3\in \supp(\phi_+)\cap \supp(\phi_-)$ allerdings $\langle \bar{E}_{{\alpha_2}{\alpha_1}{\alpha_3}{\alpha_2}},
% \bar{F}_{{\alpha_2}{\alpha_1}{\alpha_3}{\alpha_2}}\rangle$
% isomorph zur $U_q(sl_2)$. Mit $\alpha_1+\alpha_2+\alpha_3\in \supp(\phi_+)\cap \supp(\phi_-)$ wie oben, ist auch $\langle \bar{E}_{{\alpha_2}{\alpha_1}{\alpha_3}{\alpha_2}},
% \bar{F}_{{\alpha_2}{\alpha_1}{\alpha_3}{\alpha_2}}\rangle$ eine Weyl algebra
% und $B$ ist ede. 
%Die Maximalität folgt wieder wie oben.


\begin{thebibliography}{xxxxxx}

 \bibitem[BB05]{BB05}
A.~Björner and F.~Brenti, 
\emph{Combinatorics of Coxeter Groups},
Springer-Verlag, 2005.


\bibitem[Cox94]{Cox94} B. Cox: \emph{Verma modules induced from non-standard Borel subalgebras}, Pacific Journal of mathematics 165/2 (1994), 269-2434.  

\bibitem[Fut94]{Fut94} V. M. Futorny: \emph{Imaginary Verma modules for affine Lie algebras}, Canad. Math. Bull.  Vol. 37/2 (1994), 213-218. 


\bibitem[HK11a]{HK11a}
I.~Heckenberger and S.~Kolb, 
\emph{Homogeneous right coideal subalgebras of quantized enveloping
 algebras}, Bull. London Math. Soc. 44, 837-848, 2012.


\bibitem[HK11b]{HK11b}
I.~Heckenberger and S.~Kolb, 
\emph{Right coideal subalgebras of the {B}orel part of a quantized
 enveloping algebra}, Int. Math. Res. Not. IMRN, no.~2, pp
 419-451, 2011.
 
% \bibitem[HLV17]{HLV17}
%I.~Heckenberger and S.~Lentner and K.~Vocke, 
%\emph{On Borel subalgebras of quantum groups}, to appear soon.

\bibitem[HS09]{HS09}
I.~Heckenberger and H.-J. Schneider, 
\emph{Right coideal subalgebras of
 {N}ichols algebras and the {D}uflo order on the {W}eyl groupoid}, Israel Journal of Mathematics 197, 139-187, 2013.

\bibitem[HS10]{HS10} I. Heckenberger, H.-J. Schneider:
 \emph{Root systems and {W}eyl groupoids for {N}ichols algebras},
 Proc. Lond. Math. Soc. 101 (2010) 623-654
 
 \bibitem[Hum70]{Hum70}
J.~Humphreys,
\emph{Introduction to Lie Algebras and Representation Theory},
Springer-Verlag, Library of Congress Catalog, 1980.

\bibitem[Jan96]{Jan96}
J.~Jantzen,
\emph{Lectures in Quantum Groups},
Graduate Studies in Mathematics, Volume 6, American Mathematical Society, 1996.

\bibitem[Kolb14]{Kolb14} S. Kolb: \emph{Quantum symmetric Kac-Moody pairs}, Adv. Math. 267 (2014), 395-469. 

\bibitem[Kha09]{Kha09}
V. Kharchenko: \emph{Right coideal subalgebras in
$U_q^+(\mathfrak{so_{2n+1}})$}, Journal of Algebra 6(6), 2008. 
\bibitem[KS08]{KS08}
V. Kharchenko, A.V.L. Sagahon:
\emph{Right  coideal  subalgebras  in $U_q^+(\mathfrak{sl_{n}})$},
Journal of
Algebra 319
(2008), 2571-2625.


\bibitem[LV19]{LV19} S. Lentner, K. Vocke: On Borel subalgebras of quantum groups, Preprint (2019), arXiv:1905.05867. 

% \bibitem[Len14]{Len14}
% S.~Lentner,
% \emph{A Frobenius homomorphism for Lusztig's quantum groups for arbitrary roots of unity}
% Commun. Contemp. Math., Volume 18, 2016.

\bibitem[Let97]{Let97} G. Letzter: \emph{Subalgebras which appear in quantum Iwasawa decompositions}, Canadian Journal of Mathematics 49 (1997), 1206--1223.

\bibitem[Lusz90]{Lusz90} G. Lusztig: \emph{Finite dimensional Hopf algebras
 arising from quantized universal enveloping algebras}, J. Amer. Math. Soc. 3
 (1990), 257-296. 
\bibitem[Lusz93]{Lusz93} G. Lusztig: \emph{Introduction to Quantum Groups}, Birkh\"auser (1993). 
%  
% \bibitem[PA]{PA}
% P. Papi, \emph{A characterization of a special ordering in a root system}
% Proceedings of the American Mathematical Society, Volume 120, pp 661-665, 1994.
%  

\bibitem[Tesch01]{Tesch01} J. Teschner: \emph{Liouville theory revisited}, Preprint (2001), arXiv:hep-th/0104158.
\bibitem[Schm96]{Schm96} K. Schmüdgen: \emph{Operator representations of $U_q(\sl(2,\mathbb{R}))$}, Lett. Math. Phys. 37 (1996) 211-222.


\bibitem[Vocke16]{Vocke16}
K. Vocke:
\emph{Ueber Borelunteralgebren von Quantengruppen}
Dissertation University of Marburg (2016).

\bibitem[Vocke18]{Vocke18}
K. Vocke:
\emph{On right coideal subalgebras of quantum groups}, Preprint (2018), arXiv:1804.10007.


\end{thebibliography}
\end{document}